\newcommand{\bd}{\begin{displaymath}}
\newcommand{\be}{\begin{equation}}
\newcommand{\beq}{\begin{eqnarray}}
\newcommand{\ba}{\begin{array}}
\newcommand{\ed}{\end{displaymath}}
\newcommand{\ee}{\end{equation}}
\newcommand{\eeq}{\end{eqnarray}}
\newcommand{\ea}{\end{array}}
\newcommand{\espace}{\mbox{ }}
\newcommand{\bb}{\mathbb}
\newcommand{\N}{{\mathbb N}}
\newcommand{\Z}{{\mathbb Z}}
\newcommand{\R}{{\mathbb R}}
\newcommand{\PP}{{P}}
\newcommand{\EE}{{E}}
\newtheorem{theorem}{Theorem}[section]
\newtheorem{proposition}{Proposition}[section]
\newtheorem{lemma}{Lemma}[section]
\newtheorem{corollary}{Corollary}[section]
\newtheorem{remark}{Remark}[section]
\newenvironment{proof}[2]{\espace\\{\em Proof of #1 \ref{#2}.}}{\hfill\mbox{$\square$}}
\newenvironment{proofmilton}{\espace{\em Proof.}}{\hfill\mbox{$\square$}}
\numberwithin{equation}{section}
\title{Zero-range processes with rapidly growing rates}
\author{ 
Enrique Andjel\thanks{Aix Marseille Universit\'e, CNRS, Centrale Marseille I2M UMR 7373 13453, Marseille, France. Visiting IMPA. Email: {\tt andjel@impa.br}},
In\'es Armend\'ariz\thanks{IMAS-CONICET and Departamento de Matem\'atica, FCEN, Universidad de Buenos Aires. Pabell\'on 1, Ciudad Universitaria, 1428 Buenos Aires, Argentina . Email: {\tt iarmend@dm.uba.ar}},  Milton Jara \thanks{ Instituto de Matem\'atica Pura e Aplicada, IMPA. Estrada Dona Castorina 110, 22460- 320, Rio de Janeiro, Rio de Janeiro, Brasil. Email: {\tt monets@impa.br}} 
}
\begin{document}
\sloppy
\maketitle
\begin{abstract}
\noindent
We provide two methods to construct zero-range processes with superlinear rates on  $\Z^d$. In the first method  these rates can 
grow very fast, if either the dynamics and the initial distribution are translation invariant or if only nearest neighbour translation invariant jumps are permitted, in the one-dimensional lattice. In the second method the rates cannot grow as fast but more general dynamics are allowed.\\
\\
{\em AMS 2010 Mathematics Subject Classifications}. Primary 60K35, 82C22\\
\\
{\em Keywords}. Zero-range process, superlinear rates, construction of dynamics, martingales, invariant measures.
\end{abstract}

\section{Introduction}\label{sec:intro}

The zero-range process was introduced by Spitzer \cite{S} as a Markov process on a $\N_0^S$, 
where $\N_0$ is the set of non-negative integers, and S is a denumerable set. In this process particles are indistinguishable, 
and a particle leaves a given site $x$ at rate $g(n)$, where $n$ is the number of particles present at $x$. 
Once a particle jumps from $x$ it moves to a site $y$ chosen according to a transition probability matrix $p(x,y)$ on $S$. 
The choice of target site $y$ is independent of the time at which the jump occurs and of the past of the process. Throughout this paper $S$ will be an integer lattice and $p(x,y)$ will be the transition matrix of a random walk on
that lattice. On occasions we will write $p(z)$ for $p(0,z)$.

The existence of the dynamics was proved initially by Holley \cite{H} and Liggett \cite{L}.   
Their results were extended by   Andjel \cite{A} who adapted to the zero range process a technique 
introduced by Liggett and Spitzer \cite{LS}. Andjel assumes that the rates satisfy a 
Lipschitz condition $\sup_{n \ge 0}|g(n+1)-g(n)|<\infty$, thus imposing that the rates grow at most linearly.  
 More recently, Bal\'azs, Rassoul-Agha, Sepp\"al\"ainen and Sethuraman \cite{BRSS} construct 
 the zero-range process with totally asymmetric dynamics $\displaystyle{p(x,y)=\mathbf{1}(y-x=1)}$ and nearest neighbour jumps in 
 the one-dimensional lattice $\Z$, under the assumption that the jump rates are non-decreasing and grow at most exponentially. 
 Under these conditions, they prove that the process is Markov and admits a one parameter  family of extremal invariant measures. 
 Their proofs are based on a representation of the model as a system of columns with monotonically increasing heights, for which
 the totally asymmetric assumption on the  dynamics is crucial. 

 In this article we introduce two methods to construct zero-range processes with superlinear rates on integer lattices, 
and identify the associated martingales. The first method allows for quite general rate functions $g$, 
but requires either nearest neighbour transition probabilities   on the one-dimensional lattice $\Z$, or that both the dynamics
and the initial distribution be translation invariant on $\Z^d$.
The second method can be applied to quite general random walks on $\Z^d$, but is more restrictive on the rate functions. 

\section{Notation and results\label{n&r}}

Throughout the article the set of sites will be the integer lattice, $S=\Z^d$. Given a transition matrix $p(\cdot, \cdot)$ on $\Z^d$ and rate function 
$g:\N_0\to [0, +\infty)$ such that $g(0)=0$, our goal is to construct an associated Markov process on the state space 
\begin{equation*}
X:=\N_0^{\Z^d}  
\end{equation*}
endowed with the product topology. Elements $\eta \in X$ will be called configurations, with $\eta=\big(\eta(x):\, x\in \Z^d \big)$, $\eta(x) \in \N_0,$ the number of particles at site $x$. We also define 
\begin{equation}
\label{finite}
X_f:=\big\{\eta\in X, \sum_x \eta(x)<\infty \big\}
\end{equation}
the set of configurations with finitely many particles.
We say that a function $f:X\to \R$ is local if there exists a finite set $A\subset \Z^d$ such that 
$f(\eta)=f(\xi)$ whenever $\eta(x)=\xi(x)$ for all $x\in A$. We call the smallest such set the support of $f$.

Let $g:\N\cup\{0\}\to \R_{\ge 0},\,g(0)=0$. The  formal generator of our dynamics is 
\begin{equation}
\label{generator}
Lf(\eta)=\sum_{x,y\in \Z^d,x\neq y}g(\eta (x))p(x,y)\big(f(\eta^{x,y})-f(\eta)\big),
\end{equation}
where $f:X\to \R$ is a bounded local function, and 
\begin{equation}
\label{eta-xy}
\eta^{x,y}(z)=\begin{cases}\eta(x)-1, &z=x \text{ and } \eta(x)\ge 1, \\ \eta(y)+1 & z=y \text{ and } \eta(x)\ge 1, \\ \eta(z) & \text{ otherwise.} \end{cases}
\end{equation} 
Informally, at rate $g(k)$ a site $x$ containing $k$ particle loses one that jumps to site $y$ with probability $p(x,y)$. We will say that a process $(\eta_t,\,t\geq 0)$  on a subset of $X$ is
a solution of the martingale problem associated to $L$ if for any local bounded function $f$
$$f(\eta_t) -f(\eta_0)-\int_0^tLf(\eta_s)ds, \qquad t\geq 0$$
is a martingale. We also say that the process satisfies the integrated forward equation if for any $f$ as above
\begin{equation}
\label{integrated}
E\big[f(\eta_t)\big]= f(\eta_0) + \int_0^t E\big[L f(\eta_u)\big] du \qquad  \forall t\geq 0.
\end{equation}
In some cases we can show a stronger result, the forward equation, that is:
\begin{equation}
\label{forward}
\frac{d}{dt}E^{\eta_0}[f(\eta_t)]=E^{\eta_0}\big[Lf(\eta_t)\big].
\end{equation}

We are interested in the situation when the rates are non-decreasing and diverge at $\infty$,
\begin{equation}
\label{crecientes} 
 g(n)\le g(n+1),\,\ \  n\in \N_0, \quad \text{and} \quad \lim_{n\to \infty} g(n)=\infty. 
\end{equation}
Condition \eqref{crecientes} will imply that the processes we construct are attractive, that is, the coordinate-wise partial order of configurations
\[
\eta, \xi \in X,\, \eta\le \xi \iff \eta(x)\le \xi(x)\, \forall\,  x\in \Z^d
\]
 is preserved by the dynamics. This means that there exists a coupled process $\big((\eta_t,\xi_t),\, t\ge 0\big)$ with initial value $(\eta,\xi)$ such that $P\big(\eta_t\le \xi_t \, \forall t\ge 0\big)=1$ and 
both $(\eta_t,\,t\ge 0)$ and $(\xi_t,\,t\ge 0)$ follow \eqref{generator}; the coupling in this case is said to be increasing. One such coupling is the basic coupling, which tries to match the two marginal processes as much as possible, and supplements the rates to get the right marginal distributions. Its generator is given by 
\begin{equation}
\label{basic}
\begin{split}
\hspace{-9mm}{\cal L}^{coup}f(\eta,\xi)&=\sum_{x,y\in \Z^d,\,x\neq y} g(\eta(x) \wedge \xi(x)) \,p(x,y)\big(f(\eta^{x,y}, \xi^{x,y})\big)-f(\eta,\xi)\big) \\
&\quad + \sum_{x,y\in \Z^d,\,x\neq y} [g(\eta(x))-g(\eta(x) \wedge \xi(x))]\,p(x,y) \big(f(\eta^{x,y}, \xi)\big)-f(\eta,\xi)\big) \\
&\quad + \sum_{x,y\in \Z^d,\,x\neq y} [g(\xi(x))- g(\eta(x) \wedge \xi(x))] \,p(x,y) \big(f(\eta, \xi^{x,y})\big)-f(\eta,\xi)\big),
\end{split}
\end{equation}
$f:X\times X\to \R$ a local, bounded function. This partial order on $X$ induces a partial order on the set of probability measures on $X$: given two such measures $\mu$ and $\nu$ we say that $\mu\leq \nu$ if
$$\int f(\eta)d\mu(\eta)\leq \int f(\eta)d\nu(\eta)$$
for any bounded local, increasing function $f$. By Strassen's Theorem, $\mu \leq \nu$ if and only if there exists a probability measure $\psi$ on $X \times X$ concentrating on $\{(\eta,\xi)\in X\times X: \eta\leq \xi\}$ whose first
and second marginals are $\mu$ and $\nu$ respectively.

The zero-range process started from an initial configuration  $\eta \in X_f$ is a well defined continuous time Markov process with bounded rates on a countable state space. We will denote by $S(t)$ the semigroup associated to $L$ acting on configurations with finitely many particles,
\begin{equation*}
S(t)f(\eta)=E^\eta[f(\eta_t)], \quad \eta \in X_f\/,f \text{ a bounded local function}.
\end{equation*}
The semigroup $S(t)$ also acts on probability measures on $X_f$: If $\mu$ is such a measure, then $\mu S(t) $    is the unique measure such that
\begin{equation*}
\int fS(t)(d\mu)=\int S(t)f d\mu \quad \forall \, \text{  bounded local function } f.
\end{equation*}
It then follows from the attractiveness of the process that for any bounded local function $f$,  $t\geq 0$, 
and $\eta\leq \xi \in X_f$, we have $ S(t)f(\eta)\leq S(t)f(\xi)$.
 And if $\mu\leq \nu$ are probability
 measures on $X_f$ then $\mu S(t)\leq \nu S(t)$.  In words, the semigroup $S(t)$ maps increasing functions into increasing functions and preserves the order of measures on $X_f$.

When the initial configuration $\eta\in X\setminus X_f$, we can consider an increasing sequence $\eta^n\to \xi$, $\eta^n \in X_f$ for all $n\ge 1$, and apply basic coupling to obtain a limiting process $\eta_t=\lim_{n\to \infty} \eta^n_t$, $t\ge 0$; this paper is concerned with finding conditions under which this process is well defined for all times, and identifying its martingales and invariant measures.

A set of transition probabilities $\{p(x,y)\}_{x,y \in \Z^d}$ is translation invariant when the random walk they determine is translation invariant; that is,  $p(x,y)=p(y-x)$ with $\{p(z)\}_{z\in \Z^d}$ such that $p(z)\ge 0, z\in \Z^d$, and $\sum_{z}p(z)=1$.
Let us now define a family $\{T^x: x\in \Z^d\}$ of translation operators acting on $X$, on $C(X)$ and on the set $\mathcal P$ of probability measures on $X$ as follows,
\begin{equation}
\label{to-1}
T^x(\eta)(y)=\eta(y-x), \ \  T^xf(\eta)=f(T^x\eta) \ \ \forall x,y\in\Z^d,\, \eta \in X,\, f\in C(X),
\end{equation}
and 
\begin{equation}
\label{to-2}
\int fd(T^x \mu)=\int (T^xf)d\mu  \ \ \forall x\in\Z^d,\, \mu \in \mathcal P,\, f\in C(X)\text{ bounded}.
\end{equation}
We say that $\mu \in \mathcal P$ is translation invariant if $T^x \mu =\mu$ for all $x\in \Z^d$.

The first result shows that the process can indeed be constructed starting from a translation invariant measure $\mu$ on $X$. In order to state it, we 
need to introduce a family of auxiliary measures  associated to $\mu$. Let us first define $[\mu]_n$ on $X$ as
\begin{equation}
\begin{split}
\label{mu_n}
\hspace{-10mm}&[\mu]_n\big(\eta: \eta(x_1)=k_1,\dots \eta(x_i)=k_i,\eta(y_1)=0,\dots,\eta(y_j)=0\big)\\
&\hspace{4.5cm}=\mu\big(\eta: \eta(x_1)=k_1,\dots \eta(x_i)=k_i\big),
\end{split}
\end{equation}
for all $i,j\in \N$, all $k_1,\dots k_i \in \N_0$, all $x_1,\dots,x_n\in [-n,n]^d$ and all $y_1,\dots,y_j \notin [-n,n]^d$.

\begin{proposition}\label{l1} Let $\{g(n)\}_{n\ge 0}$ be as in \eqref{crecientes}, and consider translation invariant transition probabilities $\{p(x,y)\}_{x,y \in \Z^d}$.  Let $\mu$ be a translation invariant probability measure on $X$ such that  $\int \eta(0)d\mu(\eta)<\infty$. Then, 
for  all  $t\geq 0$, the sequence $[\mu]_n S(t)$ converges as $n\to \infty$ to a probability measure $\mu_t$ on $X$ satisfying:
\begin{align} 
&\text{\it i) } \mu_t  \text{ is translation invariant,}
\notag\\
&\text{\it ii) } \int \eta(0)d\mu_t(\eta)\leq \int \eta(0)d\mu(\eta),
\notag\\
&\text{\it iii) } \text{ Semigroup property: for } s,t\geq 0,\, \mu_{t+s}=(\mu_t)_s,
\notag\\
\intertext{and if $\mu_n$ is an increasing sequence of probability measures on $X_f$ that converge weakly to $\mu$, then }
&\text{\it iv) } \lim_n \mu_nS(t)=\mu_t. 
\notag 
\end{align}
\end{proposition}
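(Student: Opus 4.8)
The plan is to construct $\mu_t$ as a monotone limit and to characterise it by the single identity $\int f\,d\mu_t=\int S(t)f\,d\mu$ for increasing bounded local $f$, reducing i), iii) and iv) to this characterisation and isolating one genuinely hard estimate, the control of the mean density. I would first show that $[\mu]_n$ is stochastically increasing and exhausts $\mu$: sampling $\eta\sim\mu$ and erasing the coordinates outside $[-n,n]^d$ couples $[\mu]_n$ below $[\mu]_{n+1}$ and below $\mu$, so by Strassen's theorem $[\mu]_n\le[\mu]_{n+1}\le\mu$, while $[\mu]_n$ agrees with $\mu$ on every fixed box once $n$ is large and hence converges weakly to $\mu$; each $[\mu]_n$ charges $X_f$ because a finite box carries finitely many particles $\mu$-a.s. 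Since $S(t)$ preserves the stochastic order on $X_f$, the measures $[\mu]_nS(t)$ increase. For increasing bounded local $f$ the function $S(t)f$ is increasing and bounded, and writing it as an increasing limit of local functions makes it lower semicontinuous; the portmanteau theorem then gives $\liminf_n\int S(t)f\,d[\mu]_n\ge\int S(t)f\,d\mu$, while $[\mu]_n\le\mu$ together with monotonicity of $S(t)f$ gives the matching bound for the $\limsup$, so $\int f\,d([\mu]_nS(t))\to\int S(t)f\,d\mu$. Letting $f$ range over increasing indicators, this prescribes all upper-orthant probabilities and hence a consistent family of finite-dimensional laws, the candidate $\mu_t$.

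The crux is that these finite-dimensional laws be non-defective, i.e. that no coordinate escapes to $+\infty$; equivalently $\int\eta(0)\,d\mu_t<\infty$, which is what makes $\mu_t$ a genuine probability measure and supplies the tightness needed to upgrade convergence of finite-dimensional laws to weak convergence. I would reduce this to the uniform bound $\sup_n u_n(0,t)\le\rho$, where $u_n(x,t):=\int\eta(x)\,d([\mu]_nS(t))$ and $\rho:=\int\eta(0)\,d\mu$. Applying $L$ to the admissible observable $\eta\mapsto\eta(x)$ yields the discrete continuity equation $\frac{d}{dt}u_n(x,t)=-E[g(\eta_t(x))]+\sum_z p(z,x)\,E[g(\eta_t(z))]$, whose sum over $x$ vanishes, so the particle number is conserved and $\sum_x u_n(x,t)=(2n+1)^d\rho$. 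The step I expect to be the main obstacle is that this equation is not closed: its right-hand side involves $E[g(\eta_t)]$ rather than $u_n$, so it cannot be integrated directly. To close it I would exploit attractiveness and the monotonicity of $g$ — testing the continuity equation against an excessive function of the walk, or comparing $[\mu]_nS(t)$ from below with a stationary product measure $\bar\nu_\alpha\ge[\mu]_n$ when one is available — to obtain the a priori bound uniformly in $n$. This estimate is the heart of the proposition.

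Granting that $\mu_t$ is a probability measure, assertions i), iii) and iv) follow from the characterisation $\int f\,d\mu_t=\int S(t)f\,d\mu$. For iv): if $\nu_n\uparrow\mu$ is any increasing sequence in $X_f$, the same lower-semicontinuity and monotonicity argument gives $\int f\,d(\nu_nS(t))\to\int S(t)f\,d\mu=\int f\,d\mu_t$, so the limit does not depend on the approximating sequence. For i): since $p$ is translation invariant the dynamics commutes with translations, $S(t)T^{-a}f=T^{-a}S(t)f$, whence $\int f\,d(T^a\mu_t)=\int S(t)T^{-a}f\,d\mu=\int T^{-a}S(t)f\,d\mu=\int S(t)f\,d(T^a\mu)=\int S(t)f\,d\mu=\int f\,d\mu_t$, using $T^a\mu=\mu$. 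For iii): $\int f\,d\mu_{t+s}=\int S(t+s)f\,d\mu=\int S(t)\big(S(s)f\big)\,d\mu=\int S(s)f\,d\mu_t=\int f\,d(\mu_t)_s$, the outer equalities again justified through local approximation of $S(s)f$.

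Finally ii) is read off from the density estimate of the second paragraph: by monotone convergence $\int(\eta(0)\wedge M)\,d\mu_t=\lim_n\int(\eta(0)\wedge M)\,d([\mu]_nS(t))\le\sup_n u_n(0,t)\le\rho$, and letting $M\to\infty$ gives $\int\eta(0)\,d\mu_t\le\rho$. Translation invariance makes the mean density constant in space, and the inequality — rather than the equality one would expect from exact conservation — records the mass that the approximating sequence may lose to infinity as $n\to\infty$.
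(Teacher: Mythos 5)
Your construction of $\mu_t$ and your treatment of i), iii) and iv) via the identity $\int f\,d\mu_t=\int S(t)f\,d\mu$ (with $S(t)f$ extended to $X$ by monotone limits) is a legitimate alternative to the paper's purely order-theoretic argument, and you correctly identify that everything hinges on the uniform moment bound $\sup_n\int\eta(0)\,d([\mu]_nS(t))\le\rho$. But that bound --- which you yourself call ``the heart of the proposition'' --- is precisely what you do not prove. The two closures you suggest do not work in the required generality: domination of $[\mu]_n$ by a stationary product measure $\mu_\phi$ fails whenever the marginal of $\mu$ has a heavier tail than every $\mu_\phi$ (the $\mu_\phi$ have all exponential moments finite under \eqref{crecientes}, while $\mu$ is only assumed to have a finite first moment), and ``testing against an excessive function of the walk'' is a direction, not an argument. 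Since both the non-defectiveness of $\mu_t$ (tightness of $\{[\mu]_nS(t)\}$) and assertion ii) rest on this estimate, the proof is incomplete at its central point.

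The frustrating part is that you already hold both ingredients of the paper's elementary proof and do not combine them. You note that total particle number is conserved, so $\sum_{y}\int\eta(y)\,d([\mu]_{n}S(t))=(2n+1)^d\rho$, and you note that translation invariance makes the limiting density constant in space. The paper quantifies the latter: since $T^{y-x}[\mu]_n\le[\mu]_{n+\|y-x\|_\infty}$ and $S(t)$ preserves stochastic order, the quantity $c_{t,n}(x):=\int\eta(x)\,d([\mu]_nS(t))$ satisfies $c_{t,n}(x)\le c_{t,n+\|y-x\|_\infty}(y)$; hence for a fixed $n_0$ with $c_{t,n_0}(0)\ge M-\eps$ one gets $c_{t,n_0+m}(y)\ge M-\eps$ for every $\|y\|_\infty\le m$, so $(2m+1)^d(M-\eps)\le\sum_{y}\int\eta(y)\,d([\mu]_{n_0+m}S(t))=(2n_0+2m+1)^d\rho$. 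Letting $m\to\infty$ gives $M\le\rho$ for every $M<c_t$, i.e.\ $c_t\le\rho$, with no continuity equation and no comparison measure. Replacing your second paragraph by this counting argument closes the gap.
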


It follows from this proposition that if $\mu$ is a translation invariant measure with finite mean, then 
the process started from almost any configuration with respect to $\mu$ will not suffer explosions.
Unfortunately, we cannot deduce from this that the same holds for any given unbounded deterministic initial condition.

A natural question is whether equality holds in part ii) of this proposition. In \S \ref{Mass con} we  answer the question affirmatively when $p(x,y)$ corresponds
to a nearest neighbour random walk on $\Z$.

Given a parameter $\phi>0$, consider the  product measures $\mu_\phi$ with i.i.d.~marginal distributions 
\begin{align}
\label{invariant}
\mu_\phi\big(\eta(x)=k\big)=\frac{1}{z(\phi)} w(k) \phi^k, \qquad x\in \Z^d, \, k\in \N_0, \\
\intertext{where}
w(0)=1 \qquad \text{and}\qquad w(k)=\prod_{j=1}^k \frac{1}{g(j)},\, k\ge 1.\notag
\end{align}
The parameter $\phi$ is called the fugacity, and the measure exists as long as 
\begin{equation*}
z(\phi):=\sum_{k=0}^\infty w(k) \phi^k <\infty.
\end{equation*}
With the hypotheses \eqref{crecientes}  the measures are well defined for all choices of $\phi$, and they have finite moments of all orders. The particle density is given by 
\begin{equation*}
R(\phi):=E^{\mu_\phi}[\eta(x)]=\frac{1}{z(\phi)} \sum_{k\ge 1} k w(k) \phi^k
\end{equation*}
which turns out to be strictly increasing in the parameter $\phi$,
\[
\partial_\phi R(\phi)=\frac{1}{\phi} \Big\{\frac{1}{z(\phi)}\sum_{k=1}^\infty  k^2 w(k) \phi^k\,-\,\frac{1}{z^2(\phi)} \big(\sum_{k=1}^\infty k w(k) \phi^k\big)^2 \Big\}>0
\]
by Jensen's inequality. We also point out that $\lim_{\phi\to 0}R(\phi)=0$ and
\begin{equation*}
R(\phi)\nearrow \infty \quad \text{ as $\phi\to \infty$.}
\end{equation*}
Finally, for any $\phi>0$ we have
\begin{equation}
\label{fugacity}
E^{\mu_\phi}[g(\eta(x))]=\frac{1}{z(\phi)}\sum_{k=1}^\infty g(k) \phi^k\,w(k)=\phi,\quad x\in \Z^d.
\end{equation}

It is known that the measures $\{\mu_\phi\}_{\phi>0}$ are invariant for the zero-range dynamics when this is well defined, see e.g. \cite{A, S}. 
The following result states that this remains so under our hypotheses.

\begin{theorem}\label{l2} \textbf{Invariant measures}\\
Let the rates $\{g(n)\}_{n\ge 0}$ be as in \eqref{crecientes}. Consider translation invariant transition probabilities $\{p(x,y)\}_{x,y \in \Z^d}$. Then  $\mu_{\phi}$ satisfies $(\mu_\phi)_t=\mu_\phi$, for all $\phi\in (0,\infty)$ and $t>0$.
\end{theorem}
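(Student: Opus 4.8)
The plan is to reduce the statement to two facts: a static (algebraic) invariance identity for $\mu_\phi$ under the formal generator, and a dynamic (limiting) argument transferring invariance from regularized dynamics, for which it holds exactly, to the process built in Proposition~\ref{l1}. Since $(\mu_\phi)_t$ is determined by its integrals against bounded local functions, it suffices to prove $\int f\, d(\mu_\phi)_t = \int f\, d\mu_\phi$ for every bounded local $f$.

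First I would establish the identity $\int Lf\, d\mu_\phi = 0$ for all bounded local $f$. The key is the weight relation $g(k)\,w(k)=w(k-1)$, immediate from the definition of $w$ in \eqref{invariant}. Fixing a directed bond $(x,y)$ and performing, in the two coordinates $\eta(x),\eta(y)$, the substitution $\eta\mapsto\eta^{x,y}$ of \eqref{eta-xy} on the set $\{\eta(x)\ge 1\}$, the product structure of $\mu_\phi$ together with this relation yields
$$\int g(\eta(x))\, f(\eta^{x,y})\, d\mu_\phi = \int g(\eta(y))\, f(\eta)\, d\mu_\phi .$$
Summing against $p(x,y)$ and using \eqref{generator},
$$\int Lf\, d\mu_\phi = \int f(\eta)\sum_{x,y} p(x,y)\big(g(\eta(y))-g(\eta(x))\big)\, d\mu_\phi = 0,$$
the cancellation coming from the double stochasticity $\sum_x p(x,y)=\sum_y p(x,y)=1$ of a translation invariant kernel. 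All interchanges are legitimate because $f$ is local and $\int g(\eta(0))\,d\mu_\phi=\phi<\infty$ by \eqref{fugacity}, which makes $Lf$ integrable.

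Then comes the dynamic step. I would introduce the truncated rates $g_K:=g\wedge K$; these are bounded, so the classical construction produces a Feller semigroup $S^K(t)$ on all of $X$, for which the product measures $\mu_\phi^{(K)}$ built from $g_K$ (well defined for $K>\phi$) are invariant, by the identity of the second paragraph applied to $g_K$ together with the standard fact that for bounded rates a product measure annihilated by the generator is invariant. Since the finite-particle dynamics conserves the total number of particles and $g_K=g$ on every configuration with at most $M$ particles once $K>g(M)$, the finite-particle semigroups coincide there, $S^K(t)=S(t)$. Combined with $\mu_\phi^{(K)}\to\mu_\phi$ weakly as $K\to\infty$, this lets me compare the exact invariance $\mu_\phi^{(K)}S^K(t)=\mu_\phi^{(K)}$ with the monotone construction $(\mu_\phi)_t=\lim_n[\mu_\phi]_nS(t)$ of Proposition~\ref{l1}: for each fixed box one passes $K\to\infty$ to replace $g_K$ by $g$ and $\mu_\phi^{(K)}$ by $\mu_\phi$, and part iv) of Proposition~\ref{l1} identifies the surviving limit as $(\mu_\phi)_t$, while the left-hand side stays equal to $\mu_\phi^{(K)}\to\mu_\phi$. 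Attractiveness and the order preservation recorded before Proposition~\ref{l1} are used throughout to justify the monotone limits.

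The main obstacle is the interchange of the two limits $n\to\infty$ (box size) and $K\to\infty$ (rate truncation) in the previous step. Because the rates are superlinear there is no finite speed of propagation, so a configuration with very high occupation at a distant site can in principle influence a fixed local region within any fixed time, and the regularizations must be controlled uniformly. I expect the decisive estimate to be uniform integrability of $g(\eta(x))$ under the approximating laws over $s\in[0,t]$; a natural way to obtain it is to apply the integrated forward equation \eqref{integrated} to the bounded local functions $g(\eta(0))\wedge K$, combine this with the identity of the second paragraph and with attractiveness, and then let $K\to\infty$. Establishing this uniform control, and thereby justifying the interchange of limits, is the technical heart of the proof; once it is in place the equality $(\mu_\phi)_t=\mu_\phi$ follows.
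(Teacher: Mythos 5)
Your first step, the identity $\int Lf\,d\mu_\phi=0$ for bounded local $f$, is correct and standard, but as you are aware it does not by itself give invariance for a process with unbounded rates, and the place where your argument must actually close --- the interchange of the truncation limit $K\to\infty$ with the box limit $n\to\infty$ --- is precisely the step you leave open. The route you sketch for it does not work as stated. First, there is no stochastic domination between the $g_K$-dynamics and the $g$-dynamics: lowering the rates slows both departures and arrivals, so configurations under $S^K(t)$ and $S(t)$ started from the same initial condition are not coordinatewise ordered, and consequently part iv) of Proposition \ref{l1} (which requires an \emph{increasing} sequence of measures on $X_f$ evolving under the \emph{same} semigroup $S(t)$) cannot be invoked to identify the surviving limit as $(\mu_\phi)_t$; the family $\mu_\phi^{(K)}$ is also not monotone in $K$. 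Second, the uniform integrability of $g(\eta_s(x))$ that you propose to extract from the integrated forward equation \eqref{integrated} applied to $g(\eta(0))\wedge K$ presupposes control of the limiting process that is only established later in the paper, and even then only in restricted settings (Theorems \ref{martingales1} and \ref{t1.milton}); using it here would be circular. So the proposal is a plausible plan with its technical heart missing.

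The paper's proof avoids both difficulties by never truncating $g$; it truncates only space. It introduces the periodic dynamics $\hat S_n(t)$ on $[-n,n]^d$, for which $\Pi_n(\mu_\phi)$ is exactly invariant (a countable-state computation, since particle number is conserved), and an absorbing-boundary dynamics $\bar S_n(t)$ that is coupled \emph{below} both the periodic dynamics and the infinite-volume dynamics, yielding the sandwich \eqref{bc}--\eqref{wc}. The loss of mass of $\bar S_n(t)$ relative to $\hat S_n(t)$ is controlled by the boundary flux $A_n=O(n^{d-1})$, so the Cesàro-averaged defect $\frac{1}{n^d}\sum_x d_{t,n}(x)$ vanishes; since $d_{t,n}(x)$ is decreasing in $n$ with an $x$-independent limit, $d_t(x)\equiv 0$, and the sandwich closes via the elementary fact that two stochastically ordered measures with equal finite site-marginal means coincide (applied once to $\Pi_n(\mu_\phi)\bar S_n(t)\leq\Pi_n(\mu_\phi)$ and once, through part ii) of Proposition \ref{l1}, to $\mu_\phi\leq(\mu_\phi)_t$). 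All couplings there compare processes with the same rate function $g$, which is what makes the monotone machinery applicable; this is the structural ingredient your truncation scheme lacks.
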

 
 For $\eta \in X$ and $n\in \N$ let
\begin{equation}
\label{eta-n}
\eta^n(x)=\eta(x) \quad\text{ if }\quad |x|\leq n \quad \text{ and }\quad \eta^n(x)=0\quad  \text{ if }\quad |x| > n.
\end{equation}
The process $\eta^n_t$ with initial value $\eta^n$ is
 well defined.
 
Let us now recall the following graphical construction, first developed by Harris in \cite{Harris}, which for simplicity we describe in the particular setting of translation invariant zero-range processes.

{\it Graphical representation.} Let $\{p(z)\}_{z\in \Z^d}$ be such that $p(z)\ge 0,\,z\in \Z^d$, $\sum_z p(z)=1$. Independently for each site $x,\  x\in \Z^d$, consider a marked Poisson process 
\begin{align}
\label{pp1}
\Gamma^x_*=\big\{\big((y,t),m^x_{(y,t)}\big),\,(y,t)\in \Gamma^x\big\},
\end{align}
where $\Gamma^x$ is an intensity $1$-Poisson process on $\{(y,t)\in [0,\infty)\times[0,\infty)\}$, and the marks $\{m_{(y,t)},\,(y,t)\in \Gamma^x\}$ are i.i.d.~random vectors in $\Z^d$ with distribution $\{p(z)\}_{z\in \Z^d}$.
We will now give an explicit construction of the zero-process $(\eta_t,\,t\ge 0)$ associated to a finite initial configuration 
$\eta \in X_f$, a rate function $g(n)$ and an underlying translation invariant$\{p(y-x)\}_{x,y\in \Z^d}$ random walk.
Assume that the zero-range process $(\eta_s,\ s<t)$ has been built up to time $t-$. Then, if the Poisson point process $\Gamma^x$ has an atom at $(y,t)$ and $\eta_{t-}(x)\ge 1$,  the site $x$ will lose a particle if
$g\big(\eta_{t-}(x)\big)\ge y$. If that is the case, the particle will jump to the site $z\in \Z^d$ such that $m^x_{(y,t)}=z-x$. 

The  advantage of this method is that it allows us  to construct  zero-range processes for all initial finite-particle configurations in the same probability space. Furthermore, in the particular case $d=1$ we might improve the construction so that we can simultaneously construct all nearest neighbour zero-range processes. To see this, independently for each $x\in \Z$, consider a marked Poisson point process 
$\tilde{\Gamma}^x_*=\big\{\big((y,t),U^x_{(y,t)}\big),\,(y,t)\in \Gamma^x\big\},$ where as before $\Gamma^x$ is an intensity $1$-Poisson process on $\{(y,t)\in [0,\infty)\times[0,\infty)\}$, and the marks are independent uniform random variables,  $U^x_{(y,t)}\sim U[0,1],\,(y,t)\in \Gamma^x$.
For given transition probabilities $p(1)=p,\, p(-1)=q=1-p$ and $x\in \Z$,  let $m^{x,(p,q)}_{(y,t)}=1$ if $U^x_{(y,t)}\le p$, and $m^{x,(p,q)}_{(y,t)}=-1$ otherwise. Then the process $(\eta^{(p,q)}_t,\,t\ge 0)$ constructed using the atoms and marks of the marked Poisson process
$\tilde{\Gamma}^{x,(p,q)}_*:=\big\{\big((y,t),m^{x,(p,q)}_{(y,t)}\big),\,(y,t)\in \Gamma^x\big\},$ is a nearest neighbour zero-range process with underlying $(p,q)$-dynamics.
  
Applying the graphical representation to simultaneously construct the processes $(\eta_t^n,\,t\ge 0)$, we obtain $\eta^n_t(x)\leq \eta^{n+1}_t(x)$ for all $x,\,t,\,n$.
 We can now let 
 \begin{equation}
 \label{climit}
 \eta_t(x) =\lim_n \eta^n_t(x),
 \end{equation}  
where the process $\eta_t$ takes values in $({\N_0\cup \{\infty\}})^{\Z^d}$.
The rest of the article focuses on finding a subset $Y\subseteq X$ and conditions on the rates and jump distributions so that $(\eta_t,\,t\ge 0)$ is a Markov process on $Y$. 

In $\S$ 
\ref{sec:1d} we consider the one-dimensional case $X=\N_0^\Z$ with nearest neighbour transitions, $p(x,y)=0$ if $|x-y|>1$. Then, we let  \begin{equation}
 \label{Yset}
 Y=\{\eta \in X: \limsup \frac{1}{2n+1}\sum_{x=-n}^n \eta(x)<\infty  \}
 \end{equation}
 be the space of configurations with bounded Ces\`aro mean,
and prove: 
\begin{theorem}\label{Markov1}
Let  $d=1$, let $\{g(n)\}_{n\ge 0}$ be as in \eqref{crecientes}, and let $\{p(x,y)\}_{x,y \in \Z}$ be the transition probability matrix of a nearest neighbour random walk. If $\eta_0 \in Y$, then $(\eta_t,\, t\ge 0)$ is a Markov process on $Y$. 
\end{theorem}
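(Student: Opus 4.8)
The plan is to establish, in this order, that the monotone limit \eqref{climit} is finite at every site for all times, that the resulting configuration stays in the set $Y$ of \eqref{Yset}, and finally that the evolution defines a Markov semigroup on $Y$. The decisive structural input, available only because the jumps are nearest neighbour, is the exact conservation law for the truncated processes built from \eqref{eta-n}: writing $J^n_t(x)$ for the net rightward current across the bond $(x,x+1)$ up to time $t$ in the process $\eta^n$ (the number of jumps $x\to x+1$ minus the number of jumps $x+1\to x$), every single site satisfies
\[
\eta^n_t(x)=\eta^n_0(x)+J^n_t(x-1)-J^n_t(x).
\]
Thus the whole problem of controlling $\eta_t(x)=\lim_n\eta^n_t(x)$ reduces to bounding the currents across the two bonds adjacent to $x$, uniformly in $n$. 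This is exactly the ingredient that replaces the averaged estimate of Proposition \ref{l1}, which as noted after that statement cannot be transferred to a fixed deterministic initial condition.

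The heart of the argument, and the step I expect to be the main obstacle, is the uniform control of $J^n_t(x)$. Here I would use two facts special to the one-dimensional nearest neighbour model. First, particles cannot overtake one another, so they may be labelled in a way that preserves their order; this lets one localise the influence on a given bond and compare, bond by bond, with a stationary process. Second, for such a stationary process \eqref{fugacity} gives the exact identity $E^{\mu_\phi}[g(\eta(x))]=\phi$, which turns the instantaneous current rate $p\,g(\eta_s(x))+q\,g(\eta_s(x+1))$ into a quantity governed by the fugacity rather than by the (unbounded) occupation. Choosing $\phi$ in accordance with the Ces\`aro mean $M=\limsup_n(2n+1)^{-1}\sum_{x=-n}^n\eta_0(x)$ guaranteed by $\eta_0\in Y$, and invoking attractiveness to dominate occupation variables of $\eta^n$ by those of the equilibrium process, I would first bound $E[J^n_t(x)]$ uniformly in $n$ and locally uniformly in $x$, and then upgrade this to an almost sure bound by a Doob estimate on the current counting processes. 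Feeding the bound back into the single site identity yields $\sup_n\eta^n_t(x)<\infty$, that is $\eta_t(x)<\infty$ almost surely. The genuinely delicate point is that this equilibrium comparison must be carried out for one fixed configuration, not for a translation invariant law, which is precisely where the non-crossing and finite range of the dynamics are indispensable.

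Summing the single site identity over $[-n,n]$ gives
\[
\frac{1}{2n+1}\sum_{x=-n}^n\eta_t(x)=\frac{1}{2n+1}\sum_{x=-n}^n\eta_0(x)+\frac{J_t(-n-1)-J_t(n)}{2n+1},
\]
and the same current bound controls the two boundary terms, which are therefore negligible after normalisation by $2n+1$; hence the $\limsup$ of the Ces\`aro mean is not increased and $\eta_t\in Y$, so the dynamics indeed lives on $Y$. For the Markov property I would read \eqref{climit} as exhibiting $\eta_t$ as a measurable, monotone functional $\Phi_t(\eta_0)$ of the initial datum and of the marked Poisson processes of the graphical representation on the window $[0,t]$. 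Since these processes have independent increments in time, the flow obeys $\Phi_{t+s}(\eta_0)=\Phi_s(\Phi_t(\eta_0))$, the second factor being driven by the independent data on $[t,t+s]$; here one uses that $\eta_t\in Y$ so that the construction may legitimately be restarted from it, together with continuity of $\Phi_s$ along increasing approximations (monotone convergence) to identify the restarted process with the original one. Independence of the post-$t$ graphical data from the past then gives the Markov property of $(\eta_t,\,t\ge0)$ on $Y$.
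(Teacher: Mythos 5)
Your overall architecture (non-explosion, stability of $Y$, then the Markov property via the graphical flow) matches the paper's, and your treatment of the Markov property itself --- restarting the graphical construction at time $s$ and identifying the restarted process with the original one by a monotone sandwich along the truncations $(\eta_s)^k$ --- is essentially the paper's proof of the theorem proper. The gap is in the non-explosion step, which the paper isolates as Lemma \ref{l3} and which carries almost all of the weight. You propose to bound the current across a bond by ``invoking attractiveness to dominate occupation variables of $\eta^n$ by those of the equilibrium process'' with fugacity chosen according to the Ces\`aro mean of $\eta_0$. But attractiveness only transfers a \emph{pointwise} order on initial configurations: a fixed $\eta_0\in Y$ is not dominated coordinatewise by a sample from $\mu_\phi$ (it may have $\eta_0(0)$ arbitrarily large while $\xi(0)=0$ with positive probability), and having a smaller Ces\`aro mean produces no stochastic domination of the two processes by itself. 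You flag this as ``the genuinely delicate point,'' but the ingredients you offer in its place --- non-crossing of particles and the identity $E^{\mu_\phi}[g(\eta(x))]=\phi$ --- do not by themselves yield a bound on $E[J^n_t(x)]$ that is uniform in $n$ for a deterministic initial condition.

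The missing idea is the paper's coupling functional
\[
j(\zeta,\psi)=\Bigl[\sup_{n\leq 0,\,m\geq 0}\sum_{x=n}^m\bigl(\zeta(x)-\psi(x)\bigr)\Bigr]^+,
\]
together with the case-by-case verification that under the basic coupling $j(\zeta_t,\psi_t)$ can increase only when a $\psi$-particle jumps off the origin. This yields the pathwise bound $\zeta_t(0)\le\psi_t(0)+j(\zeta_0,\psi_0)+N_t(\psi)$, where $N_t(\psi)$ counts jumps of $\psi$-particles off $0$. Taking $\psi_0\sim\mu_\phi$ with density exceeding the Ces\`aro mean of $\eta_0$, one gets $j(\eta_0,\psi_0)<\infty$ a.s.\ by the law of large numbers, $E[N_t(\psi)]\le t\phi$ by invariance of $\mu_\phi$ and \eqref{fugacity}, and $\psi_t(0)<\infty$; hence $\eta_t(0)<\infty$, and the same inequality applied to the partial sums gives $\eta_t\in Y$. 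This replaces pointwise domination by domination up to a finite, controlled excess, and is exactly what legitimises the comparison with equilibrium for a single deterministic configuration. Your conservation-of-current identity is correct and could in principle be made to work, but only after an estimate of this type is supplied; as written the plan has no mechanism to start it.
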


Next, we show that this process solves the martingale problem associated to the generator \eqref{generator}. Note that when $p(\cdot, \cdot)$ is symmetric our proof requires that the rate function $g$ is bounded by an exponential function.

\begin{theorem}\label{martingales1}
Let $d=1$, let $\{g(n)\}_{n\ge 0}$ be as in \eqref{crecientes} and let $\{p(x,y)\}_{x,y \in \Z}$ be the transition probability matrix of a nearest neighbour random  walk.
\newline i) If $p(0,1)-p(0,-1)\neq 0$ then  $(\eta_t,t\geq 0)$ is a solution of the martingale problem associated to $L$ and \eqref{integrated} holds.
\newline ii) If $g$ is bounded by an exponential function, $g(n)\le c e^{\theta n}$, for some $c,\, \theta> 0$, and all $n\in \N$, then $(\eta_t,t\geq 0)$ is a solution of the martingale problem associated to $L$ and \eqref{forward} holds.
\end{theorem}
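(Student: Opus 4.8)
The plan is to obtain both martingales as $L^1$ limits of the corresponding martingales for the finite-particle approximations $(\eta^n_t)$ defined by the graphical construction and \eqref{climit}. For each fixed $n$ the configuration $\eta^n$ has finitely many particles, so $(\eta^n_t)$ is a continuous-time Markov chain with bounded rates and, for every bounded local $f$, the process $M^{f,n}_t:=f(\eta^n_t)-f(\eta^n_0)-\int_0^t Lf(\eta^n_s)\,ds$ is a martingale for the filtration $(\mathcal F_t)$ generated by the Poisson marks. By Theorem \ref{Markov1}, $\eta^n_t\uparrow\eta_t$ with $\eta_t\in Y$ finite almost surely, so $f(\eta^n_t)\to f(\eta_t)$ and $f(\eta^n_0)\to f(\eta_0)$ boundedly and $Lf(\eta^n_s)\to Lf(\eta_s)$ pointwise. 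For a local $f$ with support $A$ and nearest-neighbour jumps only the finite set $\bar A:=A\cup(A+1)\cup(A-1)$ contributes to \eqref{generator}, with each increment bounded by $2\|f\|_\infty$, whence $|Lf(\eta)|\le 2\|f\|_\infty\sum_{x\in\bar A}g(\eta(x))$; since $g$ is non-decreasing, monotone convergence gives $\int_0^t E\big[\sum_{x\in\bar A}g(\eta^n_s(x))\big]\,ds\uparrow\int_0^t E\big[\sum_{x\in\bar A}g(\eta_s(x))\big]\,ds$. The entire argument therefore reduces to the single estimate
$$\int_0^t E\big[g(\eta_s(x))\big]\,ds<\infty\qquad\text{for every }x\in\Z,\ t\ge0;$$
once this holds, the fixed integrable envelope $2\|f\|_\infty\int_0^t\sum_{x\in\bar A}g(\eta_s(x))\,ds$ dominates every $\int_0^t|Lf(\eta^n_s)|\,ds$, dominated convergence yields the $L^1$ limit and hence the martingale property, and taking expectations gives the integrated forward equation \eqref{integrated}.

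\textbf{The asymmetric case (i).} Applying the finite-process forward equation to $f(\eta)=\eta(x)$ gives the single-site balance $\frac{d}{dt}E[\eta^n_t(x)]=-G^n_x(t)+p\,G^n_{x-1}(t)+q\,G^n_{x+1}(t)$, with $G^n_x(t):=E[g(\eta^n_t(x))]$ and $p=p(0,1)$, $q=p(0,-1)$. Writing $\Delta^n_x(t):=\int_0^t G^n_x(s)\,ds$ and summing over the half-line $\{y\ge k\}$, the interior terms telescope and leave the integrated current identity $p\,\Delta^n_{k-1}(t)-q\,\Delta^n_k(t)=\sum_{y\ge k}\big(E[\eta^n_t(y)]-\eta^n_0(y)\big)$. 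Since $p\neq q$, this first-order recursion can be solved with the contracting geometric factor $\min(q/p,p/q)<1$, so the resulting series converges and expresses $\Delta^n_x(t)$ through the flux terms on the right; controlling those terms uniformly in $n$ by means of the Ces\`aro bound built into the definition \eqref{Yset} of $Y$ yields a bound on $\Delta^n_x(t)$ uniform in $n$, and letting $n\to\infty$ gives the required finiteness.

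\textbf{The exponential case (ii).} When $p=q$ the contraction factor degenerates to $1$ and the current identity no longer closes, so here the hypothesis $g(n)\le ce^{\theta n}$ is used to run a Lyapunov/Gronwall estimate: the exponential bound makes the action of $L$ on a suitable exponential functional comparable to the functional itself, so that a Gronwall argument bounds $\sup_{s\le t}E[g(\eta^n_s(x))]$ uniformly in $n$ and, crucially, shows that $s\mapsto E[g(\eta_s(x))]$ is continuous. Finiteness of the time integral then follows as above, giving the martingale property and \eqref{integrated}; the continuity of $s\mapsto E[Lf(\eta_s)]$ upgrades the integrated equation to the pointwise forward equation \eqref{forward}.

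\textbf{Main obstacle.} All the difficulty is concentrated in the uniform-in-$n$ control of the time-integrated rates $\int_0^t E[g(\eta^n_s(x))]\,ds$. Because the rates grow superlinearly the classical Lipschitz a priori bounds are unavailable, and this control must be extracted either from the drift, through the current identity in case (i), or from the exponential envelope on $g$, through the Gronwall estimate in case (ii). Within case (i) the most delicate point is showing that the flux terms in the current identity stay bounded uniformly in $n$ using only the Ces\`aro condition \eqref{Yset}, since the naive one-sided particle sums need not be finite for configurations in $Y$.
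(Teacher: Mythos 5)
Your high-level skeleton --- approximate by the finite systems $\eta^n$, pass to the limit in the finite-volume martingales, and reduce everything to the a priori bound $\int_0^t E[g(\eta_s(x))]\,ds<\infty$ (plus a higher moment and continuity for the pointwise forward equation) --- is exactly the paper's strategy, and your dominated-convergence route to the limiting martingale is a legitimate variant of the paper's Fatou-plus-Loynes argument. The difficulty is that essentially the entire content of the theorem sits in the two a priori bounds (the paper's Lemmas \ref{asymmetric} and \ref{finite-mean}), and neither of the mechanisms you propose for them closes. For case (i), the current identity $p\,\Delta^n_{k-1}-q\,\Delta^n_k=R^n_k$ with $R^n_k(t)=\sum_{y\ge k}\big(E[\eta^n_t(y)]-\eta^n_0(y)\big)$ is correct, and for fixed $n$ the geometric representation $\Delta^n_{k-1}=\tfrac1p\sum_{j\ge0}(q/p)^jR^n_{k+j}$ (say $p>q$) is valid. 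But the only bound on $R^n_{k+j}(t)$ that mass conservation and the Ces\`aro condition \eqref{Yset} give you is $|R^n_{k+j}(t)|\le\sum_x\eta^n_0(x)=O(n)$ uniformly in $j$, so the geometric sum yields $O(n)$, not $O(1)$. A uniform-in-$n$ bound on $R^n_k$ is a bound on the expected net current across the bond $(k-1,k)$, which \emph{is} $p\,\Delta^n_{k-1}-q\,\Delta^n_k$ --- i.e.\ essentially the quantity you are trying to bound --- so the argument is circular unless you inject an independent estimate. The paper supplies that estimate by comparing with a stationary process distributed as $\mu_\alpha$ through an auxiliary three-species coupling $(\gamma,\rho,\xi)$: the number of $\eta$-particles jumping off a site is dominated by the corresponding equilibrium quantity (finite and equal to $t\alpha$ by invariance) plus a correction carried by the $\rho$-particles, and the correction is controlled because an asymmetric nearest-neighbour walk returns to the origin only a geometric number of times. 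Some comparison with equilibrium of this kind appears unavoidable.

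For case (ii), the single-site exponential Lyapunov function does not close either: $Le^{\theta\eta(x)}$ contains the incoming term $\sum_{y\sim x}p(y,x)\,g(\eta(y))(e^\theta-1)e^{\theta\eta(x)}\le c(e^\theta-1)e^{\theta\eta(y)+\theta\eta(x)}$, which is not dominated by a fixed multiple of $e^{\theta\eta(x)}$, nor of any weighted sum $\sum_z w_z e^{\theta\eta(z)}$, precisely because $g$ is superlinear; this is the standard obstruction that defeats Gronwall arguments here. The paper's Lemma \ref{finite-mean} instead bounds $\eta_s(0)$ by $\eta_0(0)$ plus the number of distinct particles that ever reach $0$ in $[0,t]$, dominates that count (via the common graphical construction) by the corresponding counts for the two totally asymmetric dynamics $(1,0)$ and $(0,1)$, and then exploits the fact that for the totally asymmetric process started from $\mu_{\phi_0}$ the current across a bond is a Poisson process, hence has all exponential moments; a large-deviation estimate for $\mu_{\phi_0}$ (Lemma \ref{inv-m}) controls the comparison error. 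So both halves of your proposal rest on estimates that are asserted rather than proved, and at the point you yourself flag as most delicate the proposed method genuinely fails.
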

 
In $\S$\ref{sec:alter} we find an alternative set of conditions ensuring the good definition of the process; these are more restrictive on the jump rates, but allow for general finite range transitions and any dimension.
In order to derive the results of that section we need to construct our process with a different limiting procedure. Given $\eta \in X$ we enumerate the particles of $\eta$ in an arbitrary manner. Then we let $x^i$ be the position of the i-th particle
and for each $N\in \N$ we define
\begin{equation*}
\eta^N(z) := \sum_{i=1}^N \mathbf{1}(x^i = z), \qquad z\in \Z^d
\end{equation*}
and
\begin{equation}
\label{h}
h(n) :=\sup_{1\leq j\leq n}(g(j)-g(j-1)).
\end{equation}
Then, for a given continuous time random walk on $\Z^d$ and $z\in \Z^d$ we let $\tau_0$ be the hitting time of the origin, $P^z$ the law of the random walk starting from $z$ and
\begin{equation*}
F_z(t) :=P^z(\tau_0\leq t).
\end{equation*}
Finally, for $\eta \in X$, $t\geq 0$ and $z\in \Z^d$ we let
\begin{equation*}
 \overline m_z(t,\eta)=\sum_{i\in \N}F_{x^i-z}(h(i)t).
\end{equation*}
In $\S$\ref{sec:alter} we will see that $\overline m_z(t,\eta)$ is an upper bound for the expected number of particles reaching $z\in \Z^d$ over the time interval $[0,t]$, for the initial configuration $\eta$. 

 The following result is a version of Theorem \ref{martingales1} with different hypotheses on the jump rates and transition probabilities, for general dimension $d\ge 1$.
\begin{theorem}
\label{t1.milton}
Let $\eta_0 \in X$, $g: \bb N_
0 \to [0,\infty)$ as in \eqref{crecientes}, $T >0$, $p(x,y)$  a set of finite-range, translation-invariant transition probabilities such that
\begin{itemize}
\item[i)]  for any $z \in \Z^d$ 
\begin{equation}
\label{As1}
\overline m_z(T,\eta_0)<\infty,
\end{equation}
\item[ii)] there exist positive 
$\theta, c$ such that $g(n) \leq c e^{\theta n}$ for any $n \in \bb N_0$.
\end{itemize}
Then the process $\{M_t^f; t \in [0,T]\}$ given by
\begin{equation}
\label{forward2}
M_t^f:= f(\eta_t) -f(\eta_0) - \int_0^t L f(\eta_s) ds
\end{equation}
for any $t \in [0,T]$,  is a martingale for any local, bounded function $f:X \to \bb R$. Moreover
\eqref{forward} holds.
\end{theorem}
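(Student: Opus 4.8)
The plan is to obtain Theorem~\ref{t1.milton} by approximating $\eta_t$ with the finite-particle processes $\eta^N_t$ introduced before the statement and passing to the limit in the corresponding exact martingale identity. For each fixed $N$ the configuration $\eta^N$ has finitely many particles, so $\eta^N_t$ is a continuous-time Markov chain with bounded jump rates on a countable state space; Dynkin's formula then shows that
\[
M^{f,N}_t := f(\eta^N_t)-f(\eta^N_0)-\int_0^t Lf(\eta^N_s)\,ds,\qquad t\in[0,T],
\]
is a martingale for every bounded local $f$, and that $\tfrac{d}{dt}E[f(\eta^N_t)]=E[Lf(\eta^N_t)]$. By the graphical construction of $\S$\ref{sec:alter} one has $\eta^N_t(x)\uparrow\eta_t(x)$ for every $x$ and $t$, and condition i) together with the interpretation of $\overline m_x(T,\eta_0)$ as a bound on the expected number of particles reaching $x$ guarantees $\eta_t(x)<\infty$ a.s.\ for all $x$ and all $t\le T$. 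Since $\eta^N_t(x)$ is integer valued and nondecreasing in $N$ with a finite limit, it stabilizes, so $\eta^N_t(x)=\eta_t(x)$ for $N$ large. This stabilization drives every subsequent convergence.

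To prove the martingale property I would fix $0\le s\le t\le T$ and a bounded $\mathcal F_s$-measurable random variable $G$ and pass to the limit in $E[(M^{f,N}_t-M^{f,N}_s)G]=0$. Because $f$ is local and bounded, stabilization gives $f(\eta^N_t)\to f(\eta_t)$ and $f(\eta^N_s)\to f(\eta_s)$ a.s., and bounded convergence handles these terms. The only delicate term is $\int_s^t Lf(\eta^N_u)\,du$. Writing $A$ for the support of $f$ and $A'$ for the finite set of sites from which a jump can alter $A$ (finite, as $p$ has finite range), we have
\[
|Lf(\eta)|\le 2\|f\|_\infty\sum_{x\in A'}g(\eta(x)),
\]
and, because $g$ is nondecreasing and $\eta^N_u(x)\le\eta_u(x)$, the integrand is dominated by $2\|f\|_\infty\sum_{x\in A'}g(\eta_u(x))$, a function of $(u,\omega)$ independent of $N$. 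Dominated convergence then yields $\int_s^t Lf(\eta^N_u)\,du\to\int_s^t Lf(\eta_u)\,du$, and hence $E[(M^f_t-M^f_s)G]=0$, i.e.\ $M^f$ is a martingale\,---\,provided this dominating function is integrable on $[0,T]\times\Omega$.

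The integrability of the dominating function is the crux, and it is exactly here that hypotheses i) and ii) combine. By monotone convergence it suffices to bound $\sup_N\sup_{s\le T}E[g(\eta^N_s(x))]$ for each $x\in A'$. Using ii), $g(\eta^N_s(x))\le c\,e^{\theta\eta^N_s(x)}$, so I need uniform exponential moments of the occupation numbers. These follow from the coupling underlying $\overline m$: each labelled particle is dominated by an independent continuous-time random walk started at $x^i$ and accelerated by the factor $h(i)$ of \eqref{h}, so that $\eta^N_s(x)$ is stochastically dominated by a sum $\sum_{i=1}^N B_i$ of independent Bernoulli variables with parameters $p_i=F_{x^i-x}(h(i)s)\le F_{x^i-x}(h(i)T)$. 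Consequently
\[
E\big[e^{\theta\eta^N_s(x)}\big]\le\prod_{i=1}^N\big(1+(e^\theta-1)p_i\big)\le\exp\big((e^\theta-1)\,\overline m_x(T,\eta_0)\big),
\]
which is finite by i) and uniform in $N$ and in $s\le T$. I expect this exponential-moment estimate to be the main obstacle: the first-moment bound $\overline m_x(T,\eta_0)<\infty$ alone does not control $E[g(\eta^N_s(x))]$ when $g$ grows like $e^{\theta n}$, and it is the independence (or at least the product bound) supplied by the coupling that is essential\,---\,precisely the reason ii) is imposed.

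Finally, for the forward equation, taking $G\equiv 1$ and $s=0$ in the martingale identity gives $E^{\eta_0}[f(\eta_t)]=f(\eta_0)+\int_0^t E^{\eta_0}[Lf(\eta_u)]\,du$, the integrated form, where the uniform bound above justifies Fubini. To upgrade this to \eqref{forward} I would show that $u\mapsto E^{\eta_0}[Lf(\eta_u)]$ is continuous on $[0,T]$: at a fixed time the approximating chains have no jump a.s., so $\eta_u\to\eta_t$ a.s.\ on the finite set $A'$ as $u\to t$, giving $Lf(\eta_u)\to Lf(\eta_t)$ a.s., while the uniform exponential moments supply the uniform integrability needed to pass to the limit under the expectation. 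Continuity of the integrand together with the fundamental theorem of calculus then yields $\tfrac{d}{dt}E^{\eta_0}[f(\eta_t)]=E^{\eta_0}[Lf(\eta_t)]$, completing the proof.
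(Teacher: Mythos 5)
Your proposal is correct and follows essentially the same route as the paper: approximate by the finite-particle processes $\eta^N_t$, use the exact martingale identity for each $N$, control $Lf(\eta^N_s)$ via the exponential moment bound $\log E[e^{\theta\eta^N_s(z)}]\le(e^\theta-1)\overline m_z(T,\eta_0)$ (which is precisely Lemma \ref{l1.milton}, which you re-derive), pass to the limit, and obtain \eqref{forward} by the continuity/uniform-integrability argument of Proposition \ref{forwardeqs}. The only cosmetic difference is that you close the limit in the martingale identity by pointwise stabilization plus dominated convergence against a bounded $\mathcal F_s$-measurable test variable, whereas the paper establishes $L^p$ convergence of $\int_0^t Lf(\eta^N_s)\,ds$; both are valid given the uniform moment bound.
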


An application of this theorem is given by the following corollary which requires a hypothesis on the growth of $h$ rather than $g$.
\begin{corollary}\label{finite_m}
Assume that the initial configuration of particles has finite Ces\`aro mean  $\rho=\limsup_{m\to \infty} \frac{1}{(2m+1)^d}\sum_{\|x\|\le m} \eta_0(x)$ and that the rates satisfy 
\eqref{crecientes}. Let  $p(x,y)$ be translation invariant, finite-range transition probabilities and let $h$ be as in \eqref{h}. Then, the conclusions of Theorem \ref{t1.milton} hold 
for any $T>0$, if either of the following conditions is satisfied,
\begin{itemize}
\item[a)]
$\sum_{z \in \bb Z^d} z\,p(0,z) =0$
and there exists  $a < \frac{2}{d}$ such that $\sup_n h(n)\,n^{-a}<\infty$.
\item[b)] $\lim_n h(n)\,n^{-1/d}=0 $.
\end{itemize}

\end{corollary}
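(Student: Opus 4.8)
The plan is to deduce Corollary \ref{finite_m} from Theorem \ref{t1.milton} by verifying its two hypotheses, the nontrivial one being the moment bound \eqref{As1}. Hypothesis ii) of the theorem is immediate from \eqref{crecientes} together with either of the growth conditions on $h$: since $g(n)-g(n-1)\le h(n)$ and $g(0)=0$, one has $g(n)\le \sum_{j=1}^n h(j)$, and under a) or b) the partial sums of $h$ grow at most polynomially, hence are trivially bounded by an exponential $ce^{\theta n}$. So the entire content of the corollary lies in showing that a finite Ces\`aro mean forces $\overline m_z(T,\eta_0)<\infty$ for every $z$.

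To estimate $\overline m_z(T,\eta_0)=\sum_{i\in\N}F_{x^i-z}(h(i)T)$, first I would fix $z$ and, by translation invariance, reduce to $z=0$. The finite Ces\`aro mean assumption says that the number of particles in a box of radius $m$ is $O(m^d)$; equivalently, if we order particles by increasing distance from the origin, the $i$-th particle sits at distance $|x^i|\gtrsim i^{1/d}$ (a finite Ces\`aro density means roughly $\rho(2m+1)^d$ particles within radius $m$, so the $i$-th nearest particle is at distance of order $(i/\rho)^{1/d}$). The key step is then to bound $F_{x^i}(h(i)T)=P^{x^i}(\tau_0\le h(i)T)$, the probability that a continuous-time random walk started at $x^i$ hits the origin within time $h(i)T$. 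The series $\sum_i F_{x^i}(h(i)T)$ converges provided these hitting probabilities decay summably in $i$; the two cases correspond to the two mechanisms controlling this decay.

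In case a), the walk is \emph{mean-zero}, so the typical displacement up to time $s$ is of order $s^{1/2}$. With $s=h(i)T$ and $h(i)=O(i^{a})$, the diffusive range is $O(i^{a/2})$, whereas the starting point is at distance $\gtrsim i^{1/d}$. A particle can reach the origin with non-negligible probability only when the range reaches the starting distance, i.e.\ when $i^{a/2}\gtrsim i^{1/d}$; for the tail sum one needs large-deviation/hitting estimates showing $F_{x^i}(h(i)T)$ is summable, which holds precisely when $a/2<1/d$, that is $a<2/d$. The hypothesis $a<\tfrac2d$ is exactly the threshold at which the diffusive spreading of the sped-up walk stays below the geometric spacing of the particles, making the series converge. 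In case b), no centering is assumed, so one must use the ballistic scale: the range of the walk up to time $s$ is $O(s)$, hence $O(h(i)T)$, and the condition $h(i)=o(i^{1/d})$ guarantees this range is asymptotically smaller than the interparticle distance $\sim i^{1/d}$, again yielding summable hitting probabilities.

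The main obstacle will be making the hitting-probability estimates precise and uniform. For a finite-range continuous-time random walk I would invoke standard tail bounds: in the mean-zero case a Gaussian/Bernstein-type deviation bound $P^{x}(\tau_0\le s)\le P(\sup_{u\le s}|X_u-x|\ge |x|)\le C\exp(-c|x|^2/s)$ valid in the regime $|x|^2\gg s$, and in the general ballistic case an exponential bound $P^{x}(\tau_0\le s)\le C\exp(-c|x|\log(|x|/s))$ when $|x|\gg s$. Substituting $|x^i|\gtrsim i^{1/d}$ and $s=h(i)T$ and summing over $i$, the stated growth restrictions on $h$ are exactly what make the exponents diverge fast enough for convergence. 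Care is needed to control the finitely many particles at small $i$ (where the estimates degenerate) separately — these contribute a finite sum trivially since each $F\le 1$ — and to handle the case $h(n)\equiv 0$, where $g$ is constant and the result is classical. Once $\overline m_0(T,\eta_0)<\infty$ is established, the corollary follows directly from Theorem \ref{t1.milton}.
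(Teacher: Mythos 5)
Your proposal is correct in its overall architecture and matches the paper's proof in structure: hypothesis ii) of Theorem \ref{t1.milton} via the telescoping bound $g(n)\le\sum_{j\le n}h(j)$ (the paper asserts this step without detail, so your elaboration is welcome), then reduction of \eqref{As1} to the geometric fact that an enumeration by distance gives $\|x^i-z\|\ge c\,i^{1/d}$ for all but finitely many $i$, then hitting-time estimates split into a diffusive case a) and a ballistic case b). Case b) is essentially identical to the paper: the paper bounds $F_{x_0^k-z}(h(k)t)$ by the probability that a rate-one Poisson clock rings at least $\lfloor Dk^{1/d}\rfloor$ times before time $h(k)t$, which decays like $\exp(-\alpha k^{1/d})$ when $h(k)=o(k^{1/d})$; this is the same Poissonian mechanism behind your bound $C\exp(-c\|x\|\log(\|x\|/s))$.

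The one genuine divergence is case a). The paper does not use Gaussian concentration at all: it observes that the mean-zero walk is a martingale, applies Doob's $L^p$ maximal inequality together with the moment bound $\mathbb{E}^0[\|X_t\|^p]\le Ct^{p/2}$ (proved by induction on integer $p$), and obtains the purely polynomial estimate $\mathbb{P}^0(\tau_{-x}\le t)\le C_p' t^{p/2}\|x\|^{-p}$. Summing gives terms of order $k^{ap/2-p/d}$, and choosing $p$ with $a<\tfrac2d-\tfrac2p$ yields convergence. This buys two things over your route: it avoids proving any exponential tail bound for a continuous-time walk, and it sidesteps a real pitfall in your stated estimate. Namely, the inequality $\mathbb{P}^x(\tau_0\le s)\le C\exp(-c\|x\|^2/s)$ is \emph{false} in the deep-tail regime $\|x\|\gg s$ (which does occur in case a) whenever $a<1/d$): there the true decay is only Poissonian, of order $\exp(-c\|x\|\log(\|x\|/s))$, which is much larger than $\exp(-c\|x\|^2/s)$. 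Your argument survives because the Poissonian bound is still amply summable against $\|x^i\|\ge c\,i^{1/d}$, and you state that bound separately, but as written the Gaussian estimate would need to be restricted to the regime $s\gtrsim\|x\|$ (with the Poisson bound covering the rest). The paper's Chebyshev-type bound holds uniformly in both regimes and requires no case distinction, which is why it is the cleaner choice; the trade-off is that it needs $p$ to be taken large depending on how close $a$ is to $2/d$, whereas a correct two-regime exponential bound would handle all $a<2/d$ at once.
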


Finally, we note that by the law of 
large numbers, for any $\phi>0$, the measure $\mu_\phi$ in \eqref{invariant} is supported on $Y$. Then,
once the zero-range process is well defined, for instance under the hypotheses of Theorems \ref{Markov1} or \ref{t1.milton}, Theorem \ref{l2} identifies a family of translation 
invariant, invariant measures.

The proofs in this paper are presented as follows: In $\S$ \ref{sec:c+im}  we prove Propostion \ref{l1}, Theorem \ref{l2} and a lemma establishing some properties of the invariant measures $\mu_{\phi}$.  
In $\S$ \ref{sec:1d} we restrict the setting to $\Z$ with nearest neighbour transition matrices $p(x,y)=0$ if $|x-y|>1$. We first show that the process started from an arbitrary configuration in $Y$ does not undergo explosions,  then that it satisfies the Markov property and after that a conservation of mass property for translation invariant initial distributions. Finally, in the last part of that section we prove Theorem \ref{martingales1}. In $\S$ \ref{sec:alter} we prove Theorem \ref{t1.milton} and Corollary \ref{finite_m}. We conclude the paper stating some open problems in $\S$ \ref{op}.

\section{Translation invariant initial distributions\label{sec:c+im}}

In this section we first prove Proposition \ref{l1}, where the zero-range process is constructed for translation invariant initial distributions having a finite mean.
We then prove Theorem \ref{l1} concerning invariant measures. 

\begin{proof}{Proposition}{l1}

To prove ii)
let $\{[\mu]_n\}_{n\in \N}$ be the family of probability measures associated to $\mu$  as in \eqref{mu_n}. We now show that
\begin {equation*}
\int \eta(x)d[\mu]_n S(t)\leq \int \eta(0)d\mu(\eta)
\end{equation*}
for all $t\geq 0$, $x\in \Z^d$ and $n\in \N$.
Let $c_{t,n}(x)=\int \eta(x)d[\mu]_n S(t)$. This is increasing in $n$ and therefore converges to a limit $c_t(x)\in [0,\infty]$.  Since $T^{y-x}[\mu]_n\leq [\mu]_{n+\|y-x\|_{\infty}}$,
 for all $x,y\in \Z^d$, all $n\in \N$ and $t\geq 0$
we have
\begin{equation}
\label{e2}
\begin{split}
c_{t,n}(x)&=\int \eta(x)d[\mu]_nS(t)= \int(T^{y-x}\eta)(y)d[\mu]_nS(t) \\
 &=\int \eta(y)dT^{y-x}[\mu]_nS(t)\leq \int \eta(y)d[\mu]_{n+\|y-x\|}S(t)=c_{t,n+\|y-x\|_{\infty}}(y).
\end{split}
 \end{equation}

Taking limits as $n$ goes to infinity we get $c_t(x)\leq c_t(y)$ and exchanging the roles of $x$ and $y$ we get the opposite inequality. Hence $c_t(x) $ 
does not depend on $x$ and we rename it $c_t$.
Fix $\epsilon >0$. Then there exists $n_0$ such that for all $n\geq n_0$
\[
\int \eta(0)d[\mu]_n S(t)\geq M- \epsilon,\quad \text{for any constant }M<c_t.
\]
It then follows from \eqref{e2} that:
\begin{equation}
\label{e3}
\sum_{y: \|y\|_{\infty}\leq m} \int \eta(y)d[\mu]_{n_0+m}S(t)\geq (2m+1)^d (M-\epsilon)\,.
\end{equation}
But the left hand side of \eqref{e3} is bounded above by
\begin{equation}
\label{e4}
\begin{split}
\sum_{y\in \Z^d} \int \eta(y)d[\mu]_{n_0+m}S(t)&= \sum_{y\in \Z^d} \int \eta(y)d[\mu]_{n_0+m} \\
&=(2n_0+2m+1)^d \int \eta(0)d\mu\,,
\end{split}
\end{equation}
where the first equality follows from the fact that the number of particles of a finite initial configuration is conserved.
It now follows from  \eqref{e3} and \eqref{e4} that
\begin{equation*}
M-\epsilon \leq \frac {(2n_0+2m+1)^d}{(2m+1)^d}\int \eta(0)d\mu ,
\end{equation*}
and letting $m$ go to infinity we get
\[
M-\epsilon \leq \int \eta(0)d\mu\,. 
\]
Since $\epsilon$ is arbitrary we conclude that
\[M\leq \int \eta(0)d\mu\ .\]
As  $M$  is any number strictly smaller than $c_t$, this implies that $c_t\leq \int \eta(0)d\mu<\infty$. Using again the assumption
that $\int \eta(0)d\mu < \infty$, we see that  the sequence 
$\{[\mu]_nS(t)\}_{n\in \N}$ is tight, and from the fact that it
is increasing it follows that it must converge to a measure $\mu_t$  with   $\int \eta(0)d\mu_t\leq  \int \eta(0)d\mu$ and ii) is
proved.

Since $[\mu]_n\leq T^x[\mu]_{n+\|x\|_{\infty}}$ and the semigroup preserves the order of measures we have: 
\[[\mu]_n S(t) \leq T^x[\mu]_{n+\|x\|_{\infty}} S(t).\]
Taking limits as $n$ goes to infinity we get
$\mu_t\leq T^x\mu_t$. Since the opposite inequality can be proved in the same way, i) follows.

Let now $\{\mu_n\}_{n\in \N}$ be an increasing sequence of probability measures on $X_f$ converging weakly to $\mu$. For all $k\ge 1$ we have $[\mu_n]_k \leq \mu_n$,  hence
\[
[\mu_n]_kS(t)\leq \mu_n S(t),
\]
and therefore
\[
 [\mu]_kS(t)\leq \lim_n \mu_n S(t).
\]
Taking limits in $k$ we get
\[
 \mu_t \leq \lim_n \mu_n S(t).
\]
Then, using the fact that as $k$ goes to infinity, $[\mu_n]_k$ increases to $\mu_n$, write
\[
\mu_n S(t)=\lim_k[\mu_n]_k S(t)\leq \lim_k [\mu]_k S(t)=\mu_t,
\]
thus proving iv).

In order to obtain iii), note that 
\[
\mu_{t+s}= \lim_n [\mu]_n S(t+s)=\lim_n ([\mu]_n S(t)) S(s).
\]
Since $[\mu]_n S(t)$ increases to $\mu_t$, the result follows from iv).
\end{proof}

We now turn to the proof of Theorem \ref{l2}. Recall the definition \eqref{invariant} of the family of translation invariant, product measures $\{\mu_\phi\}_{\phi>0}$. Given a measure $\mu$ on $X$ we will consider its projection  on $X_n:=\N_0^{[-n,n]^d}$,
\begin{equation}
\label{pi_n(mu)}
\Pi_n(\mu)(\xi)=\mu\big(\eta \in X,\,\ \eta(x)=\xi(x) \, \forall\,x\in [-n,n]^d\big),\quad \xi \in X_n.
\end{equation}
Note that while $[\mu]_n$  as in \eqref{mu_n} is a probability measure on $X$, $\Pi_n(\mu)$ is a probability measure on $X_n$. Clearly
$\Pi_n(\mu)=\Pi_n([\mu]_m)$ for any $m\geq n$.

\begin{proof}{Theorem}{l2}
For $n\in \N$ we  say that $x,y\in \Z^d$ are equivalent ($x\equiv y$) if all the coordinates of $x-y$ are multiples of $2n+1$. 
Define a transition matrix $p_n$ on $[-n,n]^d$ as follows:
for $x,y \in [-n,n]^d$ let 
\[p_n(x,y)=\sum_{z:\, z\equiv y}p(x,z).\]
A standard computation shows that the measures $\Pi_n(\mu_{\phi})$  are invariant for the periodic zero-range process on
$X_n$ with transition probability matrix $p_n(\cdot ,\cdot )$. Call $\hat S_n(t)$ the semigroup
associated to this process. Now define a new process on $X_n$. In this new process particles jump as in the original process 
following the transition matrix $p(\cdot,\cdot )$ but when a particle jumps to a point off $[-n,n]^d$ it vanishes. Call 
$\bar S_n(t)$ its semigroup. 
By means of appropriate couplings we will now prove that
\begin{equation} \label{bc}
\Pi_n(\mu_{\phi})\bar S_n(t)\leq \Pi_n\big([\mu_{\phi}]_nS(t)\big) 
\end{equation}
and
\begin{equation}\label{wc}
\Pi_n(\mu_{\phi})\bar S_n(t)\leq \Pi_n(\mu_{\phi})\hat S_n(t)=\Pi_n(\mu_{\phi}). 
\end{equation}
 Note that the measures involved in these expressions are supported on $X_f$. 
 To prove \eqref{bc} first let 
 \begin{equation}
\label{eta-x}
\eta^{x}(z)=\begin{cases}\eta(x)-1, &z=x \text{ and } \eta(x)\ge 1, \\ \eta(z) & \text{ otherwise.} \end{cases}
\end{equation} 

Then,  consider 
 the following generator of a Markov Process $(\eta_t,\xi_t)$ on $X_n\times X_f$:
\begin{equation}
\begin{split}
\hspace{-9mm}{\cal G}f(\eta,\xi)&=\sum_{x,y\in [-n,n]^d,\,x\neq y} g(\eta(x) \wedge \xi(x)) \,p(x,y)\big(f(\eta^{x,y}, \xi^{x,y})\big)-f(\eta,\xi)\big) \\
&\quad + \sum_{x,y\in [-n,n]^d,\,x\neq y} [g(\eta(x))-g(\eta(x) \wedge \xi(x))]\,p(x,y) \big(f(\eta^{x,y}, \xi)\big)-f(\eta,\xi)\big) \\
&\quad + \sum_{x,y\in [-n,n]^d,\,x\neq y} [g(\xi(x))- g(\eta(x) \wedge \xi(x))] \,p(x,y) \big(f(\eta, \xi^{x,y})\big)-f(\eta,\xi)\big) \\
&\quad +\sum_{x\in [-n,n]^d,y\in \Z^d\setminus [-n,n]^d} [g(\eta(x) \wedge \xi(x))]\,p(x,y) \big(f(\eta^{x}, \xi^{x,y})\big)-f(\eta,\xi)\big) \\
&\quad +\sum_{x\in [-n,n]^d,y\in \Z^d\setminus [-n,n]^d}[g(\eta(x))-g(\eta(x) \wedge \xi(x))]\,p(x,y) \big(f(\eta^{x}, \xi)\big)-f(\eta,\xi)\big) \\
&\quad + \sum_{x \in  [-n,n]^d,\, y\in \Z^d\setminus [-n,n]^d} [g(\xi(x))- g(\eta(x) \wedge \xi(x))] \,p(x,y) \big(f(\eta, \xi^{x,y})\big)-f(\eta,\xi)\big) \\
&\quad + \sum_{x \in  \Z^d\setminus [-n,n]^d} g(\xi(x))\big(f(\eta, \xi^{x,y})\big)-f(\eta,\xi)\big) .\\
\end{split}
\end{equation}  
In the same fashion as basic coupling \eqref{basic}, this generator matches the evolution of $(\eta_t,\,t\ge 0)$ and $(\xi_t,\,t\ge 0)$as much as possible, and supplements the rates so that the semigroups 
of the first and second marginals of the process with generator $\cal G$ are $\bar S_n(t)$ and $S(t)$ respectively. Moreover, if this process starts from a configuration $(\eta,\xi)$ such that $\eta(x)\leq \xi(x)$ for all $x\in [-n,n]^d$, then
$P(\eta_t(x)\leq \xi_t(x))=1$ for all $ x\in [-n,n]^d$ and all $ t\geq 0$. Hence, \eqref{bc} follows.
 
 To prove \eqref{wc} we procede similarly: we consider the following generator of a Markov Process on $X_n \times X_n$,
 \begin{equation}
\begin{split}
\hspace{-9mm}{\cal H}f(\eta,\xi)&=\sum_{x,y\in [-n,n]^d,\,x\neq y} g(\eta(x) \wedge \xi(x)) \,p(x,y)\big(f(\eta^{x,y}, \xi^{x,y})\big)-f(\eta,\xi)\big) \\
&\quad + \sum_{x,y\in [-n,n]^d,\,x\neq y} [g(\eta(x))-g(\eta(x) \wedge \xi(x))]\,p(x,y) \big(f(\eta^{x,y}, \xi)\big)-f(\eta,\xi)\big) \\
&\quad + \sum_{x,y\in [-n,n]^d,\,x\neq y} [g(\xi(x))- g(\eta(x) \wedge \xi(x))] \,p(x,y) \big(f(\eta, \xi^{x,y})\big)-f(\eta,\xi)\big) \\
&\quad +\sum_{x,y \in [-n,n]^d}\big( \sum_{z\neq y, z\equiv y}p(x,z)\big) [g(\eta(x) \wedge \xi(x))]\,p(x,y) \big(f(\eta^{x}, \xi^{x,y})\big)-f(\eta,\xi)\big) \\
&\quad +\sum_{x,y \in [-n,n]^d}\big( \sum_{z\neq y, z\equiv y}p(x,z)\big) [g(\eta(x))-g(\eta(x) \wedge \xi(x))] \big(f(\eta^{x}, \xi)\big)-f(\eta,\xi)\big) \\
&\quad +\sum_{x,y \in [-n,n]^d}\big( \sum_{z\neq y, z\equiv y}p(x,z)\big)  [g(\xi(x))- g(\eta(x) \wedge \xi(x))] \big(f(\eta, \xi^{x,y})\big)-f(\eta,\xi)\big) .\\
\end{split}
\end{equation}  
 The semigroups of the first and second marginals of the process with generator $\cal H$ are $\bar S_n(t)$ and $\hat S_n(t)$ respectively. And if $\eta(x)\leq \xi(x)$ for all $x\in [-n,n]^d$, then
$P(\eta_t(x)\leq \xi_t(x))=1$ for all $ x\in [-n,n]^d$ and all $ t\geq 0$. Hence, the inequality in \eqref{wc} follows. Finally, the equality in \eqref{wc} is due to the invariance under $\hat S_n(t)$ of $\Pi_n(\mu_{\phi})$.

 Now, fix $\phi>0$
and let  
\[
\rho=\int \eta(x)d\mu_{\phi} (\eta),
\]
which does not depend on $x$.
For $n\geq \|x\|_{\infty}$ let 
\[ 
0\le d_{t,n}(x)= \rho-\int \eta(x)d\big(\Pi_n(\mu_{\phi}) \bar S_n(t)\big).
\]
Note that this function is  decreasing  in $n$.
Let
\begin{align}
\label{2-1}
d_t(x)=\lim_n d_{t,n}(x).
\end{align}
To prove that \eqref{2-1} does not depend on $x$ we apply analogous arguments to those used to derive \eqref{e2}. By coupling and the fact that $n-\|x\|_\infty \le (n+\|x-y\|_\infty) -\|y\|_\infty$ we see that
\[
\int \eta(y)d\big(\Pi_{n+{\|x-y\|}_{\infty}}(\mu_{\phi}) \bar S_{n+\|x-y\|_{\infty}}(t)\big)\geq   \int \eta(x)d\big(\Pi_n(\mu_{\phi}) \bar S_n(t)\big)
\]
and 
\[
d_{t,n+{\|x-y\|}_{\infty}}(y)\leq d_{t,n}(x)\,.
\]
Hence $d_t(y)\leq d_t(x)$ and role reversing  $x$ and $ y$ we conclude
that $d_t(x)=d_t(y)$ for all $x,y\in \Z^d$.

To derive an upper bound for
$\sum_{x\in [-n,n]^d}d_{t,n}(x)$, write
\begin{equation}
\label{dtn}
\sum_{x\in [-n,n]^d}d_{t,n}(x)=
\sum_{x\in [-n,n]^d}\Big(\int \eta(x) d\big(\Pi_n(\mu_{\phi})\hat S_n(t)\big)-\int \eta(x) d\big(\Pi_n(\mu_{\phi})\bar S_n(t)\big)\Big).
\end{equation}
Coupling the processes with semigroups $\hat S_n$ and $\bar S_n$ and starting them  from the same random initial configuration distributed  according to
$\Pi_n(\mu_{\phi})$, we see that the rate at which
\eqref{dtn} increases with $t$ 
is, at all times, bounded above by
\[
A_n=\sum_{\substack{x\in [-n,n]^d,\\ y\notin [-n,n]^d}}p(x,y)\int \eta(x)d\big(\Pi_n(\mu_{\phi})\big) =\rho \sum_{\substack{x\in [-n,n]^d,\\ y\notin [-n,n]^d}}p(x,y).
\]
Hence 
\[
\sum_{x\in [-n,n]^d}\Big(\int \eta(x) d(\mu_{\phi})\hat S_n(t)-\int \eta(x) d(\mu_{\phi})\bar S_n(t)\Big)\leq tA_n
\]
and
\[
0\leq \lim_n \frac{1}{n^d}\sum_{x\in [-n,n]^d}d_{t,n}(x)\leq \frac{tA_n}{n^d}=0. 
\]

But since $d_{t,n}(x)$ is decreasing in $n$ and its limit does not depend on $x$, this can only happen if $d_t(x)=0$.
Together with \eqref{wc} this
implies that 
the finite-dimensional distributions of 
 $\Pi_n(\mu_{\phi})\bar S_n(t)$
 increase as $n \to \infty$  to the finite-dimensional distributions of $\mu_{\phi}$.
It now follows from \eqref{bc} and part ii) of Proposition \ref{l1} that  $(\mu_{\phi})_t=\lim_n   [\mu_{\phi}]_n S(t)=\mu_{\phi}$.
\end{proof}

We finish this section with a lemma describing some simple properties of the invariant measures $\mu _{\phi}$.

\begin{lemma}\label{inv-m}
Assume $\lim_n g(n)=\infty$ and that $\eta \in X$ is distributed according to $\mu _{\phi}$, then
\begin{equation*} 
\sum_{k\in \N} \exp(\gamma k)\mu_{\phi}(\eta(x)=k)<\infty\quad  \forall \phi,\gamma>0,
\end{equation*}
and for any $\alpha>0$
\begin{equation}
\label{ld}
\lim_{\phi \to \infty }\limsup_{k\to \infty}\frac{1}{k} \log\mu_{\phi}\Big( \sum_{x=1}^k \eta(-x) \le \alpha k\Big)=-\infty.
\end{equation}
\end{lemma}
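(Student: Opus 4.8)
The plan is to handle the two assertions separately, both reducing to elementary properties of the partition function $z$. For the first, I would note that under \eqref{crecientes} the ratio test applied to $\sum_k w(k)\psi^k$ gives $w(k+1)\psi^{k+1}/\big(w(k)\psi^k\big)=\psi/g(k+1)\to 0$, so $z(\psi)<\infty$ for every $\psi>0$ (as already recorded after \eqref{invariant}). Since $\mu_\phi$ has the product form \eqref{invariant} with marginal $\mu_\phi(\eta(x)=k)=w(k)\phi^k/z(\phi)$, it follows immediately that
\[
\sum_{k\in\N}\exp(\gamma k)\,\mu_\phi(\eta(x)=k)=\frac{1}{z(\phi)}\sum_{k\ge 0}w(k)(\phi e^{\gamma})^k=\frac{z(\phi e^{\gamma})}{z(\phi)}<\infty,
\]
which is the first claim.

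For \eqref{ld} I would use the classical Cram\'er upper bound. Under the product measure $\mu_\phi$ the variables $\eta(-1),\dots,\eta(-k)$ are i.i.d.\ with the law of $\eta(0)$, so writing $S_k=\sum_{x=1}^k\eta(-x)$ and applying the exponential Chebyshev inequality with a parameter $\lambda>0$,
\[
\mu_\phi(S_k\le\alpha k)\le e^{\lambda\alpha k}\big(E^{\mu_\phi}[e^{-\lambda\eta(0)}]\big)^k,
\]
whence $\tfrac1k\log\mu_\phi(S_k\le\alpha k)\le\lambda\alpha+\log E^{\mu_\phi}[e^{-\lambda\eta(0)}]$ for every $k$ and every $\lambda>0$. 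The moment generating factor is again governed by $z$: from \eqref{invariant}, $E^{\mu_\phi}[e^{-\lambda\eta(0)}]=z(\phi e^{-\lambda})/z(\phi)$.

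The only computation I would carry out is the identity $(\log z)'(\phi)=R(\phi)/\phi$, which follows at once from $\phi z'(\phi)=\sum_{k\ge1}kw(k)\phi^k=z(\phi)R(\phi)$. Integrating along the curve $s=\phi e^{-u}$ then gives
\[
\log\frac{z(\phi e^{-\lambda})}{z(\phi)}=-\int_0^\lambda R(\phi e^{-u})\,du\le-\lambda\,R(\phi e^{-\lambda}),
\]
where the inequality uses that $R$ is nondecreasing. Fixing $\lambda=1$ yields $\limsup_{k\to\infty}\tfrac1k\log\mu_\phi(S_k\le\alpha k)\le\alpha-R(\phi/e)$, and since $R(\phi/e)\to\infty$ as $\phi\to\infty$ the right-hand side tends to $-\infty$, which is \eqref{ld}.

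The main point to get right is the order of the two limits in \eqref{ld}: the Chebyshev bound must hold uniformly in $k$ for fixed $\phi$, so that the inner $\limsup_k$ is controlled for each $\phi$, and only afterwards is $\phi$ sent to infinity. The fixed choice $\lambda=1$ suffices precisely because the per-site cost $R(\phi e^{-\lambda})$ already diverges with $\phi$, so no $\phi$-dependent optimization over $\lambda$ is needed; this is exactly where the hypothesis $\lim_n g(n)=\infty$, equivalently $R(\phi)\nearrow\infty$, is essential.
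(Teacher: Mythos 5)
Your proof is correct, but it takes a different route from the paper for the large-deviation bound \eqref{ld}. The paper does not use exponential Chebyshev at all: it observes that $\eta(-x)\ge(\alpha+1)\mathbf{1}(\eta(-x)\ge\alpha+1)$, so that the event $\{\sum_{x=1}^k\eta(-x)\le\alpha k\}$ forces at least $\frac{1}{\alpha+1}k$ of the i.i.d.\ indicators $\mathbf{1}(\eta(-x)<\alpha+1)$ to be $1$; since $\mu_\phi(\eta(x)<\alpha+1)\to 0$ as $\phi\to\infty$, this reduces \eqref{ld} to the classical large-deviation rate for Bernoulli sums (cited from Arratia--Gordon), with stochastic domination supplying the comparison. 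Your argument instead works directly with the occupation variables via the Cram\'er upper bound and the explicit identity $E^{\mu_\phi}[e^{-\lambda\eta(0)}]=z(\phi e^{-\lambda})/z(\phi)$, together with $(\log z)'=R/\phi$ and the monotonicity and divergence of $R$ --- both facts already recorded in \S 2 of the paper. What your route buys is self-containedness (no external Bernoulli LDP) and an explicit quantitative bound $\alpha-R(\phi/e)$ on the rate; what the paper's route buys is that it needs almost nothing about the structure of $\mu_\phi$ beyond $\mu_\phi(\eta(x)\le M)\to 0$, so it is insensitive to the precise form of the marginals. Your handling of the order of limits (the Chebyshev bound holding for every $k$ at fixed $\phi$, then $\phi\to\infty$) is the right one, and your treatment of the first claim via $z(\phi e^\gamma)/z(\phi)$ is just a more explicit version of the paper's one-line remark.
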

\begin{proof}{Lemma}{inv-m}
The first statement follows immediately from the divergence of $g(k)$.  For the second statement, first note that for any $M>0$
\[
\lim_{\phi \to \infty} \mu_{\phi }(\eta(x)\leq M )=0. 
\]
Hence,
\[
\lim_{\phi \to \infty} \mu_{\phi }\left(\frac{\eta(x)}{\alpha+1}\leq 1\right)=0.
\]
Therefore for any $0<p<1$ there exists a $\phi(p)$ such that for any $\phi\geq \phi(p)$ we have
 \[
  \mu_{\phi}\left( \sum_{x=1}^k \eta(-x) \le \alpha k\right) \leq P\left(\sum_{i=1}^k X_i\leq \frac {\alpha}{\alpha+1}k\right),
 \]
 where $\{X_i\}_{i\ge 1}$ are i.i.d.~Bernoulli with parameter $p$.  But the right hand side above is equal to
 \[
  P\left(\sum_{i=1}^k Y_i\geq \frac {1}{\alpha+1}k\right)
 \]
 where $Y_1,Y_2,\dots,Y_k$ are i.i.d.~Bernoulli with parameter $1-p$. Using the expression for the large deviation rate of Bernoulli random variables, see for instance \cite{MR978907},
 we see that for any $K>0$
 \[
  \limsup_{k\to \infty}\frac{1}{k} \log P\left(\sum_{i=1}^k Y_i\geq \frac {1}{\alpha+1}k\right)\leq -K
 \]
 if $1-p$ is small enough. Hence \eqref{ld} holds.
\end{proof}

\section{Dimension 1, nearest neighbour transitions\label{sec:1d}}
 
Throughout this section we assume that  $d=1$ and that $\{p(x,y)\}_{x,y \in \Z}$ corresponds to a translation invariant, nearest neighbour random walk on $\Z$,
\begin{align}
\label{1dhypo}
p(x,y)=\begin{cases}p &y=x+1,  \\ q & y=x-1, \\ 0 & \text{ otherwise,} \end{cases}
\end{align}
where $0\le p,\,q\le 1$, $p+q=1$.

Let $X_f$ and $Y$ be as in \eqref{finite}  and \eqref{Yset} respectively.
Since we will be following the evolution of individual particles, it will be convenient to consider elements of $Y$ as increasing limits of elements of $X_f$. Hence, for $\eta\in Y$ and $n\in  \N$ we define
$\eta^n=\big(\eta^n(x)\big)_{x\in \Z}$ as in \eqref{eta-n}.

\begin{lemma}\label{l3} \textbf{Non-explosion}\\
 Let $d=1$, $\{p(x,y)\}_{x, y \in \Z}$ be as in \eqref{1dhypo} and consider rates $\{g(n)\}_{n\ge 0}$ that satisfy \eqref{crecientes}. Let $\eta_0 \in Y$.
Then $(\eta_t,\,t\ge 0)$ defined by \eqref{climit} satisfies $\eta_t(x)<\infty$ a.s. for all $t\geq 0$ and all $x\in \Z$.
Moreover, $\eta_t \in Y\ a.s.$ for all $t\geq 0$. 
\end{lemma}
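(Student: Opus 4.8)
The plan is to prove both assertions by controlling the expected occupation numbers of the approximating finite‑particle processes uniformly in the truncation level $n$, and then to pass to the increasing limit \eqref{climit}. Since $\eta^n_t(x)\uparrow\eta_t(x)$ by attractiveness, it suffices to bound $\sup_n E[\eta^n_t(x)]$; monotone convergence then yields $E[\eta_t(x)]<\infty$, hence $\eta_t(x)<\infty$ a.s. To obtain such a bound I would \emph{not} try to dominate $\eta_0$ pointwise by a translation invariant measure, which is impossible — as the remark after Proposition \ref{l1} warns — because $\eta_0\in Y$ may carry unbounded spikes. Instead I would run, in the same graphical construction, a stationary background $\zeta_t$ of law $\mu_\phi$, use basic coupling from $\eta^n_0\vee\zeta_0\ge\eta^n_0$, and invoke attractiveness to get $\eta^n_t\le\tilde\zeta_t$ for all $t$, where $\tilde\zeta$ denotes the process started from $\eta^n_0\vee\zeta_0$. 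Now $\tilde\zeta_0$ coincides with $\zeta_0$ except at the finitely many sites of $[-n,n]$ where $\eta_0$ exceeds the background, so $\tilde\zeta$ is the stationary sea carrying a finite family of extra (second‑class) particles, and $E[\eta^n_t(x)]\le R(\phi)+E[\#\{\text{extra particles at }x\text{ at time }t\}]$.

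The core estimate is then a bound on the expected number of extra particles at $x$ at time $t$, which I would write as a sum over the initial extra particles of the probability that a given one reaches $x$ by time $t$. Two ingredients control this sum. First, in the stationary sea the expected number of crossings of any edge in $[0,t]$ is finite and explicit: by \eqref{fugacity} the rightward (resp.\ leftward) crossing rate of the edge $(y,y+1)$ is $p\,E^{\mu_\phi}[g(\eta(y))]=p\phi$ (resp.\ $q\phi$), so the background itself never explodes and an extra particle, while travelling through the bulk, is carried by a clock of bounded rate. Second — and this is where the nearest neighbour hypothesis \eqref{1dhypo} is essential — particles cannot cross, so an extra particle started at distance of order $n$ must traverse the intervening bulk to reach $x$; I would dominate its displacement by a continuous‑time nearest neighbour walk and estimate the reach probability by a Poisson tail, which decays super‑exponentially in the distance. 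Choosing $\phi$ large, the large–deviation bound \eqref{ld} of Lemma \ref{inv-m} guarantees that the background sea dominates the bulk density of $\eta_0$ on every sufficiently long block, so that the extra mass is confined to the sparse spikes; combining this with $\eta_0\in Y$, whose block sums grow only linearly by \eqref{Yset}, makes the sum of (extra mass)$\times$(reach probability) converge uniformly in $n$ and in $s\le t$. The claim for all $t$ simultaneously then follows by a routine countability and right‑continuity argument.

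For the second assertion, $\eta_t\in Y$ a.s., I would argue by conservation of mass across the two boundary edges of $[-m,m]$: in each approximating process $\sum_{|x|\le m}\eta^n_t(x)$ differs from $\sum_{|x|\le m}\eta^n_0(x)$ only through the net currents at $\pm m$, and the crossing bounds above show these are $O(t)$, not growing with $m$; dividing by $2m+1$ and letting first $n\to\infty$ and then $m\to\infty$ recovers the Cesàro mean of $\eta_0$, which is finite since $\eta_0\in Y$. The main obstacle I anticipate is the displacement estimate for the extra particles when $g$ grows very fast and the walk is symmetric ($p=q$): there is no useful exponential Lyapunov weight in that regime, a single tall spike drains at the enormous rate $g(\text{height})$, and one must show that this violent local activity does not transport mass to a fixed site faster than the bulk clock allows. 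Making this rigorous — establishing that the effective transport speed is governed by $g$ evaluated at the typical $O(1)$ bulk heights rather than at the spike heights, using non‑crossing together with the density control from \eqref{ld} and the exponential tails of Lemma \ref{inv-m} — is the heart of the argument.
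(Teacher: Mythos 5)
There is a genuine gap, and you have in fact named it yourself: the ``displacement estimate for the extra particles'' that you defer to the end is not a technical loose end but the entire difficulty, and the route you propose for it does not work. Your plan is to dominate $\eta^n_t$ by a stationary sea $\zeta_t\sim\mu_\phi$ plus a family of extra (second-class) particles, and then to bound the probability that an extra particle reaches $x$ by time $t$ by comparison with ``a clock of bounded rate'' and a Poisson tail. But a second-class particle sitting on top of a column of height $k$ jumps at rate $g(k+j)-g(k+j-1)$ (discrepancy rates), and under the sole hypothesis \eqref{crecientes} these increments are unbounded: there is no bounded-rate random walk dominating its motion. This is exactly why the paper's \emph{second} construction (Theorem \ref{t1.milton} and Corollary \ref{finite_m}), which does track individual particles and does use hitting-time estimates of the kind you describe, must impose growth conditions on $h(n)=\sup_{j\le n}(g(j)-g(j-1))$ such as $h(n)\le Cn^a$ with $a<2/d$. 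Lemma \ref{l3} claims non-explosion with \emph{no} growth condition on $g$, so any proof that reduces to a speed bound for displaced particles is structurally unable to reach the stated generality. Your closing paragraph concedes that making this step rigorous ``is the heart of the argument,'' which is an accurate self-assessment: the proposal does not contain a proof.

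The paper's actual argument avoids particle tracking entirely. It introduces the functional $j(\zeta,\psi)=\bigl[\sup_{n\leq 0,m\geq 0}\sum_{x=n}^m(\zeta(x)-\psi(x))\bigr]^+$ for a basic-coupled pair and verifies, by an exhaustive case analysis of nearest-neighbour jumps, that $j(\zeta_t,\psi_t)$ can increase \emph{only} when a $\psi$-particle jumps off the origin. This gives the pathwise bound $\eta^n_t(0)\le \xi^n_t(0)+j(\eta^n_0,\xi^n_0)+N_t(\xi^n)$ with $\xi\sim\mu_\phi$, and the three terms are controlled by, respectively: choosing $\phi$ large enough that the $\mu_\phi$-density exceeds the Ces\`aro density of $\eta_0$ (so $j(\eta_0,\xi)<\infty$ a.s.); invariance of $\mu_\phi$; and $E[N_t(\xi^n)]\le t\,E^{\mu_\phi}[g(\xi(0))]=t\phi<\infty$ by \eqref{fugacity}. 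No estimate on how fast mass travels is ever needed --- the only rate that enters is the jump rate at the origin under the invariant measure, which is finite for every increasing $g$. The same inequality $j(\eta_t,\xi_t)\le j(\eta_0,\xi_0)+N_t(\xi)$ immediately gives $\eta_t\in Y$, whereas your boundary-current argument for the second assertion again requires a uniform-in-$m$ control of $g(\eta^n_s(\pm m))$ that you have not established. If you want to salvage your approach, you would need to replace the per-particle reach probability by a global monotone functional of the discrepancy field; that is precisely what $j$ is.
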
 
\begin{proof}{Lemma}{l3}
Let $j: X^2_f \rightarrow \R$ and $r:X^2_f \rightarrow \R$ be given by
\begin{align*}
&j(\zeta,\psi)=\Big[\sup_{n\leq 0,m\geq 0}\sum_{x=n}^m (\zeta(x)-\psi(x))\Big]^+ 
\\
\intertext{and}
&r(\zeta,\psi)=[\zeta(0)-\psi(0)]^+ .
\end{align*}
Note that for all $\zeta,\psi \in X_f$ we have $r(\zeta,\psi)\leq j(\zeta,\psi)$. 

Given initial configurations $\zeta_0,\,\psi_0$ consider  the coupled processes $\zeta_t,\, \psi_t$ on $X_f^2$.  
We  claim that  $j(\zeta_t,\psi_t)$ only increases 
when a $\psi$ particle jumps off $0$. To justify this last statement, first note that if a $\psi$ and a $\zeta$ particle
jump together the value of $j$ remains unchanged.
Then look at jumps of a $\zeta$ particle not accompanied by a $\psi$ particle occurring at some time $s$, and consider the 
following cases,
\begin{enumerate}
\item The $\zeta$ particle jumps from $k<0$ to $k-1$ . In this case for any $n\leq 0$ and any  $m\geq 0$ the expression 
$\sum_{x=n}^m (\zeta(x)-\psi(x))$ either remains unchanged or decreases by one unit.
\item The $\zeta$ particle jumps from $k<0$ to $k+1$. Since no $\psi$ particle jumped, it must be the case 
that just before the jump we had $\zeta_{s-}(k)- \psi_{s-}(k)>0$. The expression  $\sum_{x=n}^m (\zeta(x)-\psi(x))$
only increases if $n=k+1$. But
$\sum_{x=k+1}^m (\zeta_s(x)-\psi_s(x))=\sum_{x=k+1}^m (\zeta_{s-}(x)-\psi_{s-}(x))+1\leq \sum_{x=k}^m (\zeta_{s-}(x)-\psi_{s-}(x))$ 
and therefore $j$ does not increase.
\item The $\zeta$ particle jumps from $k>0$ to $k+1$ . 
In this case for any $n\leq 0$ and any  $m\geq 0$ the expression  $\sum_{x=n}^m (\zeta(x)-\psi(x))$ either remains unchanged or
decreases by one unit.
\item The $\zeta$ particle jumps from $k>0$ to $k-1$. Since no $\psi$ particle jumped, it must be the case that just before
the jump we had $\zeta_{s-}(k)- \psi_{s-}(k)>0$. The expression   $\sum_{x=n}^m (\zeta(x)-\psi(x))$
only increases if $m=k-1$. But
$\sum_{x=n}^{k-1} (\zeta_s(x)-\psi_s(x))=\sum_{x=n}^{k-1} (\zeta_{s-}(x)-\psi_{s-}(x))+1\leq \sum_{x=n}^k (\zeta_{s-}(x)-\psi_{s-}(x))$
and therefore $j$ does not increase.
\item The $\zeta$ particle jumps from $0$ to either $1$ or $-1$. In this case $j$ either remains unchanged or decreases by one unit.
 \end{enumerate}
Next look at jumps of a $\psi$ particle not accompanied by a $\zeta$ particle occurring at some time $s$, and consider the following cases,
\begin{enumerate}
\item The $\psi$ particle jumps from $k<0$ to $k+1$ . In this case for any $n\leq 0$ and any  $m\geq 0$ 
the expression  $\sum_{x=n}^m (\zeta(x)-\psi(x))$ either remains unchanged or decreases by one unit.
\item The $\psi$ particle jumps from $k<0$ to $k-1$. Since no $\zeta$ particle jumped, it must be the case that just before
the jump we had $\zeta_{s-}(k)- \psi_{s-}(k)<0$. The expression  $\sum_{x=n}^m (\zeta(x)-\psi(x))$
only increases if $n=k$. But 
$\sum_{x=k}^m (\zeta_s(x)-\psi_s(x))=\sum_{x=k}^m (\zeta_{s-}(x)-\psi_{s-}(x))+1\leq \sum_{x=k+1}^m (\zeta_{s-}(x)-\psi_{s-}(x))$ 
and therefore $j$ does not increase.
\item The $\psi$ particle jumps from $k>0$ to $k-1$ . In this case for any $n\leq 0$ and any  $m\geq 0$
the expression  $\sum_{x=n}^m (\zeta(x)-\psi(x))$ either remains unchanged or decreases by one unit.
\item The $\psi$ particle jumps from $k>0$ to $k+1$. Since no $\zeta$ particle jumped, it must be the case that just before
the jump we had $\zeta_{s-}(k)- \psi_{s-}(k)<0$. The expression   $\sum_{x=n}^m (\zeta(x)-\psi(x))$
only increases if $m=k$.
But, $\sum_{x=n}^{k} (\zeta_s(x)-\psi_s(x))=\sum_{x=n}^{k} (\zeta_{s-}(x)-\psi_{s-}(x))+1\leq \sum_{x=n}^{k-1} 
(\zeta_{s-}(x)-\psi_{s-}(x))$ and therefore $j$ does not increase.
 \end{enumerate}
The only remaining case is when a $\psi$ particle jumps off $0$. In this case $j$ either remains unchanged or increases by one unit.

Therefore, if we denote by $N_t(\psi) $ the number of $\psi$ particles that jumped off $0$ on $[0,t]$, we get
\begin{equation}\label{g1}
r(\zeta_t,\psi_t)\leq j(\zeta_t,\psi_t)\leq j(\zeta_0,\psi_0)+ N_t(\psi)
\end{equation}
and
\begin{equation}
\label{g2}
\zeta_t(0)\leq \psi_t(0)+r(\zeta_t,\psi_t)\leq \psi_t(0)+j(\zeta_0,\psi_0)+ N_t(\psi).
\end{equation}
Fix now $\eta \in Y$ and let 
 \[
 \gamma=\gamma(\eta):=\max \Big\{\limsup \frac{1}{n}\sum_{x=1}^n \eta(x),  \limsup \frac{1}{n}\sum_{x=1}^n \eta(-x)  \Big\}.
 \]
 Let $\phi$ be large enough so that
 $\lim \frac{1}{n}\sum_{x=1}^n \xi(x)>\gamma$ for $\mu_{\phi}$-almost all $\xi$.
 Consider zero-range processes $\eta^n_{\,\cdot}$ and $\xi^n_{\,\cdot}$ having initial configurations 
 $\eta^n_0=\eta^n$ and $\xi^n_0=\xi^n$, with $\xi$ distributed according to $\mu_\phi$. By  \eqref{g2} we get
 \begin{equation*}
 \eta^n_t(0)\leq \xi^n_t(0)+j(\eta^n_0,\xi^n_0)+ N_t(\xi^n)\,.
 \end{equation*}
Taking limits as $n$ goes to infinity, we see that
$\eta_t(0)<\infty$ a.s. will follow from
\begin{enumerate}
\item[{\it i)}] $j(\eta,\xi)<\infty$ a.s.,
\item[{\it ii)}]$\lim_n \xi_t^n(0)<\infty$ a.s., and 
\item[{\it iii)}] $\lim_n N_t(\xi^n) <\infty$ a.s..
\end{enumerate}

To prove {\it iii)} note that $ \EE\big[N_t(\xi^n)\big]= \int_0^t \EE^{\xi}\big[g(\xi^n_s(0)\big]ds$.
Then
\begin{equation}
\label{N_2}
\begin{split}
\int \EE\big[ N_t(\xi^n)\big] d\mu_{\phi}(\xi)&= \int \Big( \int_0^t \EE^{\xi^n}\big[g(\xi^n_s(0)\big]ds\Big ) d\mu_{\phi}(\xi),\\
&=\int_0^t \Big( \int \EE^{\xi^n}\big[g(\xi^n_s(0) \big] d\mu_{\phi}(\xi)   \Big ) ds 
\end{split}
\end{equation}
by Tonelli's Theorem. 
Since the process is monotone in $n$ and $g$ is increasing, the RHS in \eqref{N_2} is bounded above by
\[ 
\int_0^t \Big( \int \EE^{\xi}\big[g(\xi_s(0)\big] d\mu_{\phi}(\xi)   \Big ) ds = t\int g(\xi(0))d\mu_{\phi}(\xi) <\infty 
\]
from the invariance of $\mu_\phi$ and the fact that $E^{\mu_\phi}\big[g(\xi(0))\big]<\infty$ \eqref{fugacity}.
With our choice of $\phi$ we get $j(\eta,\xi)<\infty$ for $\mu_{\phi}$--almost all $\xi$. This proves {\it i)}. Using  the invariance of $\mu_{\phi}$, write 
\[
\int \lim_n \xi^n_t(0)d\mu_{\phi}(\xi)=\int \xi_t(0)d\mu_{\phi}=\int \xi(0)d\mu_{\phi}<\infty
\] 
from where  {\it ii)} follows.

To prove that $\eta_t \in Y$, $t>0$,  we apply the second inequality in \eqref{g1} to $\eta_0^n$ and $\xi_0^n$ and take limits in $n$
to obtain
\[
j(\eta_t, \xi_t) \leq j(\eta_0,\xi_0) +N_t(\xi),
\]
which implies the desired result.
\end{proof}

We now prove the Markov property.  The proof relies on the graphical representation described in \S~\ref{n&r}.
Let us denote by $w=\{\Gamma_*^x\}_{x\in \Z}$ the collection of independent, marked Poisson processes labelled by sites in $\Z$, see \eqref{pp1} . For $x\in \Z$ and $s<t$,  let $\Gamma^{x,(s,t]}_*$ be the Poisson atoms falling in $\R_{\ge 0}\times (s,t]\times \Z^d$ and $w_{s,t}=\big\{\Gamma^{x,(s,t]}_*\big\}_{x\in \Z}$. Then, given an initial configuration $\eta_0 \in X_f$, we can describe the state of the process $\eta_t$ at time $t$ as a function of 
its state $\eta_s$ at time $s$, and the updates $w_{s,t}$ occurring on $(s,t]$,
\begin{equation}
\label{update}
\eta_t=\Phi_{s,t}(\eta_s, w_{s,t})\quad a.s..
\end{equation}
The Markov property follows immediately from the independence of the distribution of the Poisson atoms in disjoint regions of $[0,\infty)\times[0,\infty)\times\Z^d$. Also, 
note that the mappings $\Phi_{s,t}(\cdot,\cdot)$ are monotone in the first coordinate, a consequence of the fact that the rates $\{g(n)\}_{n\ge 0}$ are non-decreasing.

We now show that it is possible to take limits in \eqref{update} to extend it to initial configurations in $Y$.

\begin{proof}{Theorem}{Markov1}
Given $\eta \in Y$, consider $\eta^n\nearrow \eta$, the sequence of processes $(\eta_t^n,\,t\ge 0)$ and the process $(\eta_t,\,t\ge 0)$ constructed in \eqref{eta-n}, so that in particular
$\eta_t^n\le \eta_t^{n+1}\le \eta_t$, for all  $n\in \N$ and $t\ge 0$.

In order to prove the theorem, it will be enough to show that the process $(\eta_t,\,t\ge 0)$ 
satisfies, for any $s<t$,
\begin{equation*}
\eta_t=\lim_{k\to \infty} \Phi_{s,t}\big((\eta_s)^k, w_{s,t}\big) \quad a.s..
\end{equation*}
From $\lim_{n\to \infty} \eta_s^n \ge (\eta_s)^k$, $k\in \N$, we have
\begin{equation*}
\eta_t=\lim_{n\to \infty} \eta_t^n=\lim_{n\to \infty} \Phi_{s,t}(\eta^n_s, w_{s,t})\ge \Phi_{s,t}\big((\eta_s)^k,w_{s,t}\big)\quad a.s..
\end{equation*}
On the other hand, for each fixed $n\in \N$, 
\[
\Phi_{s,t}\big(\eta_s^n, w_{s,t}\big)=\lim_{k\to \infty} \Phi_{s,t}\big((\eta^n_s)^k, w_{s,t}\big)\le \lim_{k\to \infty}\Phi_{s,t}\big((\eta_s)^k, w_{s,t}\big)
\]
and taking limits in $n$ this yields the opposite inequality, 
\begin{equation*}
\eta_t= \lim_{n\to \infty} \Phi_{s,t}\big(\eta_s^n, w_{s,t}\big)\le \lim_{k\to \infty}\Phi_{s,t}\big((\eta_s)^k, w_{s,t}\big)\quad a.s..
\end{equation*}
The result follows.
\end{proof}

\subsection{Mass conservation \label{Mass con}}

The next lemma states that when the initial configuration is distributed according to a translation invariant measure, mass is preserved. 

\begin{lemma}\label{l4}
 Let $d=1$, $\{p(x,y)\}_{x, y \in \Z}$ as in \eqref{1dhypo} and consider rates $\{g(n)\}_{n\ge 0}$ that satisfy \eqref{crecientes}.
Let $\mu$ be a translation invariant measure such that $\rho:=\mu\big[\eta(0)\big] <\infty$. Then 
$\rho=E^{\mu}[\eta_t(0)]$  for all $t\ge 0$.
\end{lemma}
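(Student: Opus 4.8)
The plan is to upgrade the inequality in part ii) of Proposition \ref{l1} to an equality. First, applying part iv) of that proposition to the increasing sequence of laws of the truncations $\eta^n$ of a $\mu$-distributed configuration, the law of $\eta_t$ built in \eqref{climit} is exactly $\mu_t$; hence $E^\mu[\eta_t(0)]=\int\eta(0)\,d\mu_t=:c_t$, and Proposition \ref{l1} ii) already gives $c_t\le\rho$. By the translation invariance of $\mu_t$ the same value $c_t$ occurs at every site. Everything therefore reduces to proving $c_t\ge\rho$, which I would extract from an exact conservation-of-mass identity written in terms of bond currents.

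For the finite-particle process $\eta^n_{\,\cdot}$, which conserves the total number of particles, let $J^n_t(x)$ denote the net number of particles crossing the bond $(x,x+1)$ from left to right minus right to left during $[0,t]$. Since $d=1$ and jumps are nearest neighbour, for every $m$ and every $n\ge m+1$ the mass in the window obeys
\[
\sum_{x=-m}^m \eta^n_t(x)=\sum_{x=-m}^m \eta^n(x)+J^n_t(-m-1)-J^n_t(m).
\]
I would then let $n\to\infty$. The configurations increase to $\eta_t$, so the left-hand side and the initial sum pass to the limit termwise; the gross crossing counts across each bond are monotone in $n$, and by the non-explosion Lemma \ref{l3} one has $\sup_{s\le t}\eta_s(x)<\infty$ a.s., so $\int_0^t g(\eta_s(x))\,ds<\infty$, only finitely many crossings of each bond occur in $[0,t]$, and $J^n_t(x)\to J_t(x)$ a.s. with $J_t(x)$ finite. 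This yields, almost surely,
\[
\sum_{x=-m}^m \eta_t(x)=\sum_{x=-m}^m \eta_0(x)+J_t(-m-1)-J_t(m),
\]
and, writing the balance over the window $[1,m]$ instead, $J_t(m)=J_t(0)-\sum_{x=1}^m\big(\eta_t(x)-\eta_0(x)\big)$.

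Because $\mu$ is translation invariant and the graphical construction is translation covariant (the limit \eqref{climit} being independent of the centred exhaustion), the whole space--time field is stationary under spatial shifts; in particular the current field $\{J_t(x)\}_{x\in\Z}$ and the field $\{\eta_0(x),\eta_t(x)\}_{x\in\Z}$ are stationary. I would apply Birkhoff's theorem to the integrable variable $\eta_t(x)-\eta_0(x)$, integrable since $c_t,\rho<\infty$, to obtain $\tfrac1m\sum_{x=1}^m(\eta_t(x)-\eta_0(x))\to\bar D$ a.s.\ with $E[\bar D]=c_t-\rho$. Dividing the last identity by $m$ and using $J_t(0)/m\to0$ gives $J_t(m)/m\to-\bar D$ a.s. On the other hand $\{J_t(m)\}_{m\ge0}$ is stationary, so $J_t(m)\stackrel{d}{=}J_t(0)$ and hence $J_t(m)/m\stackrel{d}{=}J_t(0)/m\to0$ in distribution; uniqueness of the distributional limit forces $\bar D=0$ a.s., whence $c_t-\rho=E[\bar D]=0$.

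The main difficulty is that one cannot simply take expectations in the current identity. With rates growing as fast as \eqref{crecientes} permits and $\mu$ an arbitrary finite-mean translation invariant measure, the expected flux $\int_0^t E^{\mu_s}[g(\eta(0))]\,ds$, equivalently $E[J_t(x)]$, may well be infinite, so the naive balance $E[J_t(-m-1)]=E[J_t(m)]$ is an $\infty-\infty$ statement. The device that circumvents this is precisely the pairing of Birkhoff's theorem with the elementary fact that a marginally stationary sequence divided by $m$ tends to $0$ in distribution; this delivers $c_t=\rho$ using only the almost sure finiteness of the currents, which in turn rests on the non-explosion Lemma \ref{l3}. The secondary technical points to check carefully are the a.s.\ convergence $J^n_t\to J_t$ and the intrinsic, exhaustion-independent translation covariance of the limiting process.
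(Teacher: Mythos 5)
Your argument is correct, but it takes a genuinely different route from the paper's. The paper never writes a conservation law for the limiting process: it couples the truncated process $\eta^n_{\,\cdot}$ (with absorption outside $[-n,n]$, semigroup $\bar S_n$) to a truncated \emph{equilibrium} process $\xi^n_{\,\cdot}$ with $\xi_0\sim\mu_\phi$, $\phi$ chosen so that $\mu_\phi$ dominates $\mu$ in density. Introducing $J_n(\eta,\xi)=[\sup_{x\in[-n,n]}\sum_{y\in[x,n]}(\eta(y)-\xi(y))]^+$ and checking that $J_n(\eta^n_s,\xi^n_s)-\Phi_n(s)+\Psi_n(s)$ is non-increasing in $s$, it bounds the number $\Psi_n(t)$ of $\eta$-particles lost at the right boundary by the corresponding quantity $\Phi_n(t)$ for the equilibrium process plus the initial term $G_n(\eta_0,\xi_0)$; since $E[\Phi_n(t)]\le t\,p\,\phi$ by invariance of $\mu_\phi$ and $G_n$ is stationary in $n$, both are $o(n)$ in probability, whence $\tfrac1n\sum_{-n}^n\big(\eta_0(x)-\eta^n_t(x)\big)\to0$ and $E^\mu[\eta_t(0)]\ge\rho$. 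You instead work directly in infinite volume, pass the exact bond-current identity to the limit, and combine Birkhoff's theorem with the elementary fact that a stationary sequence divided by $m$ vanishes in distribution. Your route gains conceptual economy -- no auxiliary equilibrium process in the main argument, and the $\infty-\infty$ difficulty with the expected gross flux is isolated cleanly -- while the paper's route has the advantage that every delicate finiteness statement is carried by the equilibrium process, whose boundary fluxes have \emph{finite expectation}.

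One step deserves more care than you give it: the a.s.\ finiteness of the currents $J_t(x)$ requires $\sup_{s\le t}\eta_s(x)<\infty$ a.s., which is strictly more than the literal statement of Lemma \ref{l3} ($\eta_t(x)<\infty$ for each fixed $t$). It does follow from that lemma's proof: the domination $\eta^n_s(0)\le\xi^n_s(0)+j(\eta^n_0,\xi^n_0)+N_t(\xi^n)$ holds for all $s\le t$, and $\sup_{s\le t}\xi_s(0)\le\xi_0(0)+A_t$ with $A_t$ the number of arrivals at the origin in $[0,t]$, which is a.s.\ finite since $E[A_t]=t\phi$ under the invariant $\mu_\phi$. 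With that supplement the monotone-in-$n$ gross crossing counts stabilize a.s., so $J^n_t(x)\to J_t(x)$ with $J_t(x)$ finite, and the remainder of your argument (exhaustion-independence of the limit, stationarity of the joint field, Birkhoff applied to the integrable variable $\eta_t(x)-\eta_0(x)$, and the distributional squeeze $J_t(m)/m\to0$) goes through.
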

\begin{proof}{Lemma}{l4}
By part ii) of Proposition \ref{l1} $E^{\mu}[\eta_t(0)]\leq \rho$. Hence, it suffices to show the opposite inequality.
Let $\phi\ge 0$ be such that $E^{\mu}\big[\eta(0)\big]\le E^{\mu_\phi}\big[\eta(0)\big]$. Consider coupled families of processes $(\eta^n_t,\,t\ge 0)$, $(\eta_t,\,t\ge 0)$, $(\xi^n_t,\,t\ge 0)$, $(\xi_t,\ t\ge 0)$, such that
\begin{enumerate}
\item $\eta_0\sim \mu$, and $\eta^n_0$ is obtained from $\eta$ as in \eqref{eta-n},
\item $\xi_0 \sim \mu_\phi$, and $\xi^n_0$ is obtained from $\xi$ as in \eqref{eta-n},
\item  $(\eta^n_t,\,t\ge 0)$ and $(\xi^n_t,\,t\ge 0)$ follow the dynamics $\big(\bar{S}_n(t)\big)_{t\ge 0}$ defined in the proof of Theorem \ref{l2},
\item $(\eta_t,\,t\ge 0)$ and $(\xi_t,\ t\ge 0)$ follow the zero-range dynamics $\big(S_t\big)_{t\ge 0}$ determined by \eqref{generator},
\item $\eta_t^n\le \eta_t$ for all $t\ge 0$ a.s., and $\xi_t^n\le \xi_t$ for all $t\ge 0$ a.s.. 
\end{enumerate}
Processes with properties $1-5$ above can be constructed by means of the graphical representation, using the same family of marked Poison processes $\big\{\Gamma_*^x\big\}_{x\in \Z}$. For the construction of  $(\eta^n_t,\,t\ge 0)$ and $(\xi^n_t,\,t\ge 0)$, particles that jump out of $[-n,n]$ are removed. Let us denote by $P^{\mu\times\mu_{\phi}}$ the joint law of these four processes, such that $P^{\mu}$ is the marginal law of $(\eta_t,\, t\ge 0)$.

Denote by $\Psi_n(t)$ and $\Phi_n(t)$  the number of $\eta^n$ and $\xi^n$ particles that jump from $n$ to $n+1$  
(and are thereby lost) over the time interval $[0,t]$.
Let
\[
J_n(\eta,\xi)=[\sup_{x\in [-n,n]}\sum_{y\in [x,n]}(\eta(y)-\xi(y))]^+\,.
\]
Now note that $J_n(\eta^n_s,\xi^n_s)- \Phi_n(s)+\Psi_n(s)$ can only decrease in time.
Hence
\[
\Psi_n(t)\leq \Phi_n(t) -J_n(\eta^n_t,\xi^n_t)+ J_n(\eta^n_0,\xi^n_0)\leq \Phi_n(t)+ J_n(\eta^n_0,\xi^n_0),\quad \PP^{\mu \times \mu_{\phi}}- a.s..
\] 
Let
\[
G_n(\eta,\xi)=[\sup_{x\in (-\infty,n]}\sum_{y\in [x,n]}(\eta(y)-\xi(y))]^+.
\]
Then clearly
\[
\Psi_n(t)\leq \Phi_n(t)+ G_n(\eta^n_0,\xi^n_0) \quad\PP^{\mu \times \mu_{\phi}}-a.s.. 
\]
Now note that 
\[
\frac{\Phi_n(t)+  G_n(\eta_0,\xi_0)}{n}\longrightarrow 0 \quad \text{ in } \PP^{\mu \times \mu_{\phi}} \text{- probability as } n\to \infty,
\]
hence the same holds for $\Psi_n(t)/n$ in  $\PP^{\mu\times \mu_\phi}$-probability. Similarly, if we denote by $\Psi_{-n}(t)$ the number of $\eta$-particles jumping from $-n$ to $-(n+1)$ over
$[0,t]$, it follows that $\Psi_{-n}(t)/n$ converges to $0$ in $\PP^{\mu\times\mu_{\phi}}$-probability. We thus get
\[ 
\lim_n \frac{1}{n}\sum_{-n}^n [\eta_0(x)-\eta_t^n(x)]=0 \quad \text{ in } \PP^{\mu \times \mu_\phi}\text{- probability,}
\]
and in particular $ \lim_n (1/n)\sum_{-n}^n\eta_t^n(x)=\rho$ in $\PP^{\mu \times \mu_\phi}$-probability. Since $\eta_t(x)\geq \eta_t^n(x)$  $\PP^{\mu \times \mu_\phi}$~a.s.,
we get $ \lim_n (1/n)\sum_{-n}^n\eta_t(x)\geq \rho$ in $\PP^{\mu }$-probability.
But $\EE^{\mu}\big[\eta_t(x)\big]$ does not depend on $x$ by part i) of  Proposition \ref{l1}, hence the last inequality implies that
$\EE^{\mu}\big[\eta_t(0)\big]\geq \rho$. 
\end{proof}

\subsection{Proof of Theorem \ref{martingales1}\label{subsec:mart1}}

To prove the theorem we will need two lemmas and a proposition which we state here and prove later:
\begin{lemma}\label{asymmetric}\textbf{Asymmetric transitions}\\
Let $d=1$, $\{g(n)\}_{n\ge 0}$ as in \eqref{crecientes}, and consider $\{p(x,y)\}_{x, y \in \Z}$ as in \eqref{1dhypo} with $p\neq q$.
Then, for $\eta \in Y$, 
the distribution $P^\eta$ of the process $\big(\eta_t,\,t\ge 0: \eta_0=\eta\big)$, satisfies
\begin{equation}
\label{fg1}
E^\eta\Big[\int_0^t g\big(\eta_s(x)\big)\,ds\Big]<\infty
\end{equation}
for all $x \in \Z$ and $t>0$.
\end{lemma}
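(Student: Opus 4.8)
The plan is to reduce everything to a uniform‑in‑$n$ bound for the finite‑particle approximations and then to extract finiteness from the drift $p\neq q$ by a current computation. Recall from \eqref{climit} that $\eta_t=\lim_n\eta^n_t$ with $\eta^n\uparrow\eta$, all realised on one probability space through the graphical representation. Since $g$ is non‑decreasing and $\eta^n_s(x)\uparrow\eta_s(x)$, monotone convergence gives $E^\eta\big[\int_0^t g(\eta_s(x))\,ds\big]=\lim_n E\big[\int_0^t g(\eta^n_s(x))\,ds\big]$. For each fixed $n$ the configuration $\eta^n$ is finite, so the jumps out of $x$ form a counting process $N^n_t(x)$ with compensator $\int_0^t g(\eta^n_s(x))\,ds$; hence $E\big[\int_0^t g(\eta^n_s(x))\,ds\big]=E[N^n_t(x)]$, and it suffices to prove $\sup_n E[N^n_t(x)]<\infty$.

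Assume without loss of generality that $p>q$ (reflect otherwise). I would split $N^n_t(x)=R^n_t(x)+L^n_t(x)$ into right and left jumps from $x$; since the direction is chosen independently with probabilities $p,q$, one has $E[R^n_t(x)]=p\,a_x$ and $E[L^n_t(x)]=q\,a_x$, where $a_x:=E[N^n_t(x)]$. Let $J^n_t(z)$ denote the net number of rightward crossings of the bond $(z,z+1)$; counting the particles lying to the right of $z$ gives $J^n_t(z)=\sum_{y>z}(\eta^n_t(y)-\eta^n_0(y))$ and, in expectation, the \emph{current} $c_z:=E[J^n_t(z)]=p\,a_z-q\,a_{z+1}$. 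This first–order recurrence, together with the a priori bound $\sum_z a_z\le |\eta^n|\,g(|\eta^n|)\,t<\infty$ (with $|\eta^n|:=\sum_x\eta^n(x)$ the finite number of particles, so that the total jump rate is bounded and $(q/p)^m a_{x+m}\to0$), can be solved explicitly:
\begin{equation*}
a_x=\frac1p\sum_{k\ge0}\Big(\frac{q}{p}\Big)^k c_{x+k}.
\end{equation*}
Because $a_x\ge0$, it is then enough to produce a constant $C=C(t,\eta)$, independent of $n$ and $z$, with $c_z\le C$: the geometric weights $(q/p)^k$ give $\sup_n a_x\le C/(p-q)$, and the lemma follows on letting $n\to\infty$.

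The crux is this uniform current bound, and here the asymmetry is decisive. The plan is to compare $\eta^n_t$ with the stationary process started from $\mu_\phi$, for $\phi$ chosen large as in Lemma \ref{l3}, coupled through the same graphical representation. For the stationary process the net current across every bond equals $(p-q)\phi\,t$ and the expected number of jumps out of any site equals $\phi\,t$, both finite and explicit. One then estimates the discrepancy $c_z-(p-q)\phi t$, which is the expected net flux across $(z,z+1)$ of the discrepancies between the two coupled configurations, by the order–preserving functionals already used in the proofs of Lemma \ref{l3} and Lemma \ref{l4} (suprema of partial sums of $\eta^n_s-\xi^n_s$); their initial values are finite because $\eta\in Y$ has bounded Ces\`aro mean while $\mu_\phi$ has density exceeding that mean. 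This should yield $\sup_{z,n}c_z<\infty$.

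The hard part will be exactly this last step. Since an arbitrary $\eta\in Y$ may carry stacks growing like $|x|$, it cannot be dominated pointwise by a light–tailed $\mu_\phi$, so no direct attractiveness comparison of the rates $g(\eta_s(x))$ is available — and indeed $g$ is left completely unrestricted in this part of the theorem. The finiteness must instead come from the drift: $p\neq q$ converts the time–integrated rate into a net current, a local quantity governed by the density rather than by the instantaneous, possibly enormous, values of $g$. This is precisely what breaks down when $p=q$, which is why part (ii) of Theorem \ref{martingales1} has to impose the extra exponential bound $g(n)\le ce^{\theta n}$.
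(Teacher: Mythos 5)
Your opening reduction is sound and matches the paper's starting point: by monotone convergence and the compensator identity for the finite approximations, \eqref{fg1} amounts to $\sup_n E[N^n_t(x)]<\infty$, where $N^n_t(x)$ counts jumps off $x$. The inversion $a_x=\frac1p\sum_{k\ge0}(q/p)^k c_{x+k}$ is also a legitimate and genuinely different way to exploit the asymmetry. The gap is precisely the step you flag as ``the hard part,'' and it is not merely technical. First, the target you set is unattainable: a constant $C(t,\eta)$ with $c_z\le C$ for all $z\ge x$ and all $n$ does not exist for general $\eta\in Y$. Take $p$ close to $1$, $g$ growing fast, and $\eta(2^k)=k$ for $k\ge1$ with $\eta\equiv0$ elsewhere (Ces\`aro mean $0$, so $\eta\in Y$); for $n\ge 2^k$ the pile at $2^k$ drains within time $t$ and its particles mostly stay to the right, so $c_{2^k}\gtrsim k$ uniformly in $n$. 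Any argument comparing $c_z$ with the $z$-independent stationary current $(p-q)\phi t$ plus a $z$-independent error must therefore fail; at best one could hope for a bound growing linearly in $|z|$ (which the geometric weights would still absorb), but that is not what you argue and it would require a quantitative estimate you have not supplied.

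Second, the order-preserving functionals of Lemmas \ref{l3} and \ref{l4} do not control the net current across an \emph{interior} bond against a coupled copy: they control the number of jumps off a fixed site ($j$) and the exit count at the boundary of $[-n,n]$ ($J_n$, where the comparison configuration is also truncated). If $D_s$ denotes the difference of the two currents across $(z,z+1)$ and one tries $V_s=[\sup_{n\le z}\sum_{y=n}^z(\zeta_s(y)-\psi_s(y))]^++[\sup_{m\ge z+1}\sum_{y=z+1}^m(\psi_s(y)-\zeta_s(y))]^+$, then $V_s+D_s$ does decrease at the two events that increase $D$, but it can \emph{increase} at an uncoupled $\psi$-jump $z\to z+1$ or an uncoupled $\zeta$-jump $z+1\to z$ (both terms of $V$ can go up by one while $D$ drops by only one), and the crude alternative via $\sum_{y>z}(\psi_0-\zeta_0)$ diverges because $\mu_\phi$ has an infinite tail. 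The paper's proof avoids currents altogether: it runs an auxiliary process $(\gamma_t,\rho_t,\xi_t)$ in which the excess of $\eta$ over a $\mu_\alpha$-distributed $\xi$ at the origin is carried by tagged $\rho$-particles performing independent $(p,q)$-walks; asymmetry enters through the transience of those walks (a geometric number of returns to $0$, Lemma \ref{countingrho}), and the number of $\rho$-particles created is bounded by $N(t)+j(\gamma_0,\xi_0)$ (Lemma \ref{aux}), both integrable under $\mu_\alpha$. To close your argument you would need either that construction or a correct current estimate with explicit, sub-geometric dependence on $z$, neither of which is present.
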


\begin{lemma}\label{finite-mean}\textbf{Exponentially bounded rates}\\
Let $d=1$, $\{g(n)\}_{n\ge 0}$ as in \eqref{crecientes}, and consider $\{p(x,y)\}_{x, y \in \Z}$ as in \eqref{1dhypo}.
Assume further that the rate function is exponentially bounded: there exists $\lambda>0$ such that
\[
g(n)\le e^{\lambda n}, \qquad n\in \N_0.
\]
Then for $\eta\in Y$, $Y$ the set in \eqref{Yset}, the distribution $P^{\eta}$ of the process $\big(\eta_t,\, t\ge 0 : \eta_0=\eta \big)$, satisfies 
\begin{equation}
\label{fg2}
\sup_{s\in [0,t]} E^{\eta}\big[g(\eta_s(x))^r\big]<\infty
\end{equation}
for any $r\in [1, \infty)$, $x\in \Z,$ and $t>0$.
\end{lemma}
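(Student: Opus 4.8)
The statement to prove is Lemma \ref{finite-mean}: under exponentially bounded rates $g(n) \le e^{\lambda n}$, for $\eta \in Y$ (bounded Cesàro mean) the process satisfies $\sup_{s \in [0,t]} E^\eta[g(\eta_s(x))^r] < \infty$ for all $r \ge 1$. Since $g(n)^r \le e^{r\lambda n}$, it suffices to control exponential moments $E^\eta[e^{\beta \eta_s(x)}]$ for every $\beta > 0$ (replacing $r\lambda$ by a general $\beta$), uniformly over $s \in [0,t]$. So the real target is a uniform-in-time exponential moment bound on a single coordinate.

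Let me think about how to prove this.

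**Key comparison idea.** The proof of the non-explosion Lemma \ref{l3} already gave the crucial domination: for coupled processes one has $\eta_t(0) \le \xi_t(0) + j(\eta_0,\xi_0) + N_t(\xi)$, where $\xi$ is a dominating process started from an invariant measure $\mu_\phi$ (chosen with density above $\gamma(\eta)$), $j$ is a sup-of-partial-sums functional, and $N_t(\xi)$ counts particles leaving the origin. The plan is to exponentiate this inequality. For a single coordinate $x$ the same argument (by translation, take $x=0$) gives $\eta_s(0) \le \xi_s(0) + j(\eta_0,\xi_0) + N_s(\xi)$ a.s. for all $s$. Hence $e^{\beta \eta_s(0)} \le e^{\beta \xi_s(0)} \cdot e^{\beta j(\eta_0,\xi_0)} \cdot e^{\beta N_s(\xi)}$, and by Hölder's inequality the expectation factorizes into three exponential-moment factors that I must bound separately, uniformly on $[0,t]$.

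**Bounding the three factors.** First, $E^{\mu_\phi}[e^{\beta' \xi_s(0)}]$ is controlled by invariance: $\mu_\phi$ is invariant (Theorem \ref{l2}), so $\xi_s(0) \sim \mu_\phi$ marginally, and Lemma \ref{inv-m} guarantees $\sum_k e^{\gamma k}\mu_\phi(\eta(0)=k) < \infty$ for every $\gamma > 0$; this factor is finite and constant in $s$. Second, $j(\eta_0,\xi_0)$ is time-independent, and I need $E[e^{\beta' j(\eta_0,\xi_0)}] < \infty$; here $j$ is a supremum of partial sums of $\eta_0(x)-\xi_0(x)$, and since $\eta_0 \in Y$ has density below $\gamma$ while $\xi_0$ has density strictly above $\gamma$ under $\mu_\phi$, the partial sums $\sum_{x=-n}^m(\eta_0(x)-\xi_0(x))$ have negative drift — this is exactly the setting of the large-deviation estimate \eqref{ld} in Lemma \ref{inv-m}, which I expect to yield an exponential tail on $j$ after choosing $\phi$ large enough. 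Third, and hardest, is $E[e^{\beta' N_s(\xi)}]$: $N_s(\xi)$ is the number of jumps off the origin for the dominating process in time $s$, a point process whose rate at time $u$ is $g(\xi_u(0)) \le e^{\lambda \xi_u(0)}$. I would bound its exponential moment by conditioning on the rate trajectory and using that a counting process with (random) cumulative rate $\Lambda_s = \int_0^s g(\xi_u(0))\,du$ dominated by a Poisson-type bound gives $E[e^{\beta' N_s} \mid \mathcal F] \le \exp((e^{\beta'}-1)\Lambda_s)$; then I need a uniform bound on $E[e^{(e^{\beta'}-1)\int_0^s e^{\lambda\xi_u(0)}\,du}]$.

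**Main obstacle.** The last factor is the crux and the place where the exponential-bound hypothesis $g(n)\le e^{\lambda n}$ is essential. Controlling $E[\exp(c\int_0^s e^{\lambda \xi_u(0)}\,du)]$ requires more than a first-moment (Tonelli) argument like the one used for $E[N_t(\xi)]$ in Lemma \ref{l3}; I expect to need either a Jensen/convexity step ($\int_0^s e^{\lambda\xi_u}\,du \le s \cdot \frac1s\int_0^s e^{\lambda\xi_u}\,du$ then bound $E[e^{cs\cdot\text{average}}]$ via a time-averaged exponential moment) combined with stationarity of $\xi_u(0)\sim\mu_\phi$, or a direct differential-inequality / Grönwall estimate on $\psi(s) = E[e^{\beta\xi_s(0)}]$ obtained by applying the generator to $e^{\beta\eta(0)}$ and using $g(n)e^{\beta n} \lesssim e^{(\beta+\lambda)n}$ to close the bound on $[0,t]$. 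I anticipate the cleanest route is to derive a direct forward differential inequality for $\psi(s)$: computing $\frac{d}{ds}E[e^{\beta\xi_s(0)}]$ produces gain terms from particles entering the origin (rate-$g$, bounded by the exponential hypothesis) and loss terms, and the exponential bound on $g$ lets one dominate the right-hand side by $C\,\psi(s)$ plus contributions from neighboring coordinates, which by translation invariance of $\mu_\phi$ are again $\psi(s)$; Grönwall then gives the uniform-in-$s$ bound on $[0,t]$. Care is needed to justify these generator computations for the infinite-volume process via the finite-$n$ truncations $\xi^n$ and monotone passage to the limit, using that everything is controlled by the invariant stationary process.
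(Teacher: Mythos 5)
Your overall architecture (dominate $\eta_s(0)$ by a stationary process plus a $j$-type discrepancy functional plus a particle current into/out of the origin, then split the exponential moment by H\"older) is the same skeleton as the paper's, and your treatment of the first two factors is sound. The gap is exactly where you locate it: the third factor. Your plan is to bound $E[e^{\beta' N_s(\xi)}]$ by conditioning on the random intensity and reducing to $E\big[\exp\big(c\int_0^s g(\xi_u(0))\,du\big)\big]$ with $g(\xi_u(0))\le e^{\lambda \xi_u(0)}$. This quantity is \emph{never} finite under the hypotheses of the lemma. After Jensen and stationarity you need $E^{\mu_\phi}[\exp(c s\, e^{\lambda \xi(0)})]=\sum_k e^{c s e^{\lambda k}} w(k)\phi^k/z(\phi)<\infty$, i.e.\ $\sum_{j\le k}\log g(j)\gtrsim e^{\lambda k}$; but $g(j)\le e^{\lambda j}$ forces $\sum_{j\le k}\log g(j)\le \lambda k(k+1)/2$, so the tail of $\mu_\phi$ is at best Gaussian and the double-exponential moment diverges. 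Your Gr\"onwall alternative does not close either: the gain term of the generator applied to $e^{\beta\eta(0)}$ is $E[g(\eta_s(-1))e^{\beta\eta_s(0)}](e^\beta-1)\lesssim E[e^{\lambda\eta_s(-1)+\beta\eta_s(0)}]$, a joint exponential moment at two sites; for a deterministic initial condition $\eta\in Y$ the law of $\eta_s$ is not product, and iterating spreads the estimate over ever more sites with growing exponents. (When the initial law \emph{is} $\mu_\phi$ the marginal is constant in $s$ by invariance, but that only handles the first factor, which was never the problem.)

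The missing idea, which is the actual content of the paper's proof, is a reduction that replaces the jump-counting process of random rate $g(\xi_u(0))$ by a current that is \emph{exactly} Poisson with constant intensity. The paper bounds $\eta_s(0)$ not by counting jumps off the origin but by counting \emph{distinct} particles that ever reach the origin during $[0,t]$; the graphical construction (with a suitable labelling convention) shows this count is maximized, separately for particles starting to the left and to the right, by the two totally asymmetric dynamics $(1,0)$ and $(0,1)$. For the totally asymmetric process in its stationary product measure $\mu_{\phi_0}$, the flow of particles across the bond $(-1,0)$ is a Poisson process of intensity $\phi_0$ (a Burke-type fact), so it has all exponential moments with no need to control $\int_0^s g(\xi_u(0))\,du$. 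The comparison back to the deterministic $\eta\in Y$ is then done with a one-sided functional $j'$ and the large-deviation bound \eqref{ld}, much as in your second factor. Without this reduction (or some substitute giving exponential moments of the current for general $(p,q)$), your proof cannot be completed as written.
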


\begin{proposition}
\label{forwardeqs}
Let $(\eta_t,\,t\ge0)$, $\eta_0\in Y$ be the process given by  \eqref{climit}. If, for some $r>1$, for all $t>0$, and any initial configuration
$\eta_0 \in Y,$  
\begin{equation}
\label{onemore}
\sup_{s\le t} E^{\eta_0}\big[g(\eta_s(x))^r\big] <\infty, \quad{\forall x\in \Z^d,}
\end{equation} 
then the process satisfies the forward equation:
\begin{equation*}
\frac{d}{dt}E^{\eta_0}[f(\eta_t)]=E^{\eta_0}\big[Lf(\eta_t)\big],
\end{equation*}
for any local bounded function $f:X\to \R$.
\end{proposition}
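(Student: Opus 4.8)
The plan is to start from the integrated forward equation for the finite-particle approximations and to pass to the limit, using the hypothesis \eqref{onemore} to tame the unbounded part of the generator. For each $n$ the process $(\eta_t^n,\,t\ge 0)$ is a Markov process with bounded rates on a countable state space, so it satisfies
\begin{equation}
\label{int-n}
E^{\eta_0^n}\big[f(\eta_t^n)\big]=f(\eta_0^n)+\int_0^t E^{\eta_0^n}\big[Lf(\eta_s^n)\big]\,ds.
\end{equation}
I realise all the $\eta^n$ and $\eta$ on one probability space through the graphical representation of \S\ref{sec:1d}, so that $\eta_s^n(x)\nearrow \eta_s(x)$ for every $x$ and $s$, with $\eta_s(x)<\infty$ a.s.\ by Lemma \ref{l3}; the expectations in \eqref{int-n} are then computed under this coupling, whose $\eta^n$-marginal is $P^{\eta_0^n}$.

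First I pass to the limit in \eqref{int-n}. Since $f$ is local and bounded and $\eta_s^n(x)\to\eta_s(x)$ through integer values, $f(\eta_t^n)\to f(\eta_t)$ a.s.\ and $f(\eta_0^n)=f(\eta_0)$ once $n$ exceeds the support of $f$; bounded convergence handles the left-hand side. For the right-hand side I write $Lf(\eta)=\sum_{x\in B} g(\eta(x))\,c_x(\eta)$, where $c_x(\eta)=\sum_y p(x,y)\big(f(\eta^{x,y})-f(\eta)\big)$ is bounded and local and $B$ is the finite set of sites from which a jump can change $f$; finiteness of $B$ uses that $p$ is nearest neighbour and $f$ local. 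Monotonicity of the coupling and of $g$ give $0\le g(\eta_s^n(x))\le g(\eta_s(x))$, so by \eqref{onemore} the family $\{\,g(\eta_s^n(x)):n\in\N,\ s\le t\,\}$ is bounded in $L^r$ with $r>1$, hence uniformly integrable. Combined with $g(\eta_s^n(x))\to g(\eta_s(x))$ a.s., this yields $E\big[Lf(\eta_s^n)\big]\to E\big[Lf(\eta_s)\big]$ for each $s$, while the uniform bound $\sup_{n,\,s\le t}\big|E[Lf(\eta_s^n)]\big|\le C\sum_{x\in B}\sup_{s\le t}E[g(\eta_s(x))]<\infty$ permits bounded convergence in the time variable. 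Taking $n\to\infty$ in \eqref{int-n} thus gives the integrated forward equation \eqref{integrated} for $(\eta_t,\,t\ge 0)$.

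It remains to upgrade this to the differential form, for which I show that $s\mapsto E^{\eta_0}[Lf(\eta_s)]$ is continuous on $[0,t]$. Fix $s_0$. Almost surely the process has no jump affecting the finite set of sites on which $Lf$ depends exactly at the deterministic time $s_0$, because those jump times lie in the countable set of Poisson atom times, which a.s.\ avoids $s_0$; hence $\eta_s\to\eta_{s_0}$ in the product topology and $g(\eta_s(x))\,c_x(\eta_s)\to g(\eta_{s_0}(x))\,c_x(\eta_{s_0})$ a.s.\ as $s\to s_0$. The uniform integrability furnished by \eqref{onemore} promotes this to convergence in expectation, giving continuity. Then the integrated forward equation with a continuous integrand yields, by the fundamental theorem of calculus, $\frac{d}{dt}E^{\eta_0}[f(\eta_t)]=E^{\eta_0}[Lf(\eta_t)]$.

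I expect the main obstacle to be the passage to the limit on the right-hand side of \eqref{int-n}: since the generator is unbounded through the factor $g(\eta(x))$, exchanging the limit in $n$ with the expectation and the time integral requires exactly the uniform integrability coming from the $L^r$ bound \eqref{onemore} with $r>1$ (a mere $L^1$ bound would not license this exchange). A secondary but genuine difficulty is the a.s.\ path continuity at a fixed time, needed to make the integrand continuous and so obtain the forward equation for every $t$ rather than only almost every $t$.
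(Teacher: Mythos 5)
Your overall strategy is sound and, for the decisive step, coincides with the paper's: establish the integrated equation \eqref{integrated}, prove that $s\mapsto E^{\eta_0}[Lf(\eta_s)]$ is continuous by combining almost sure convergence $Lf(\eta_s)\to Lf(\eta_{s_0})$ with the uniform integrability supplied by the $L^r$ bound \eqref{onemore}, and conclude by the fundamental theorem of calculus. Your first part is in fact more self-contained than the paper's, which simply invokes \eqref{integrated} (obtained earlier from the martingale property of $M_t(f)$ in the proof of Theorem \ref{martingales1}); your derivation of \eqref{integrated} by dominated convergence in the finite-particle identity, using $0\le g(\eta^n_s(x))\le g(\eta_s(x))$ and \eqref{onemore}, is correct.

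There is, however, one genuine gap in the continuity step. You argue that, almost surely, no Poisson atom at the relevant sites falls exactly at the deterministic time $s_0$, and conclude ``hence $\eta_s\to\eta_{s_0}$.'' That inference fails if the jump times of the limit process accumulate at $s_0$: the path $s\mapsto\eta_s(x)$ is an increasing limit of right-continuous integer-valued paths and is finite by Lemma \ref{l3}, but nothing you have said prevents it from having infinitely many $\pm1$ jumps in every left neighbourhood of $s_0$, in which case $\lim_{s\uparrow s_0}\eta_s(x)$ need not exist, let alone equal $\eta_{s_0}(x)$. (Note also that the atoms of $\Gamma^x$ with time coordinate in $[0,t]$ are a.s.\ dense in $[0,t]$, since the first coordinate ranges over all of $[0,\infty)$; only the accepted atoms matter, and their number must be controlled.) The paper closes this hole by first observing that \eqref{onemore} forces $(\eta_s(x);\,s\ge 0)$ to have a.s.\ finitely many jumps on $[0,t+1]$ for each $x$ in the finite set on which $Lf$ depends: the expected number of jumps out of, respectively into, $x$ is bounded by $\int_0^{t+1}E^{\eta_0}[g(\eta_s(y))]\,ds$ summed over the finitely many $y$ with $|y-x|\le 1$, which is finite by \eqref{onemore} (pass to the limit from the $\eta^n$ by monotone convergence). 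Only with this no-accumulation fact in hand does ``no jump exactly at $s_0$'' yield $P\big(\lim_{s\to s_0}Lf(\eta_s)=Lf(\eta_{s_0})\big)=1$. Once you insert this argument, the rest of your proof goes through.
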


\begin{proof}{Theorem}{martingales1}
We start showing that $\{ \eta_t, t\geq 0\}$ is a solution of the martingale problem under the hypothesis of either item of the theorem. 
For each fixed $n\in \N$ and initial configuration $\eta_0 \in Y$, the process $\{\eta^n_t;\,t\ge 0\}$  is supported on the countable state space
\[
\big\{\eta \in X;\, \sum_x \eta(x)=\sum_{|x|\le n} \eta_0(x)\big\},
\] 
and 
the transition rates are bounded above by
$g\big(\sum_{|x|\le n} \eta_0(x)\big)<\infty$, so this is a Markov chain without explosions.  
Therefore, for any local, bounded function $f: X \to \bb R$ the process
\begin{align}
\label{martingale}
M_t^n(f) := f(\eta_t^n) - f(\eta_0^n) - \int_0^t L f(\eta_s^n) \,ds
\end{align}
is a martingale with quadratic variation 
\begin{align*}
\langle M_t^n(f)\rangle = \int_0^t \sum_{x,y \in \bb Z^d} g(\eta_s^n(x))p(x,y) \big(\nabla_{x,y} f(\eta_s^n)\big)^2 ds,
\end{align*}
where $\nabla_{x,y} f(\eta):=f(\eta^{x,y})-f(\eta)$ and $\eta^{x,y}$ is as in \eqref{eta-xy}.
Let $A$ be the support of $f$ and let $\bar{A}=\big\{y\in \Z,\, \inf_{x\in A}\|y-x\|\le1\big\}$. Then
\begin{equation}
\label{qvbound}
E\big[M_t^n(f)^2\big] = E\big[\langle M_t^n(f)\rangle\big] \leq 8 \|f\|_\infty^2\int_0^t \sum_{x \in \bar{A}} E[g(\eta_s^n(x)] ds.
\end{equation}

Due to \eqref{fg1} and \eqref{fg2}, in either item of the statements of the theorem, we can take the limit  as $n \to \infty$ in  \eqref{martingale}  and  conclude that the sequence $M_t^n(f)$ converges $a.s.$ and in $L^1$  to 
\begin{align*}
M_t(f) = f(\eta_t) - f(\eta_0) - \int_0^t L f(\eta_s)\,ds. 
\end{align*}
Now,  using Fatou's Lemma and  \eqref{qvbound}, we can control the second moment of the increment $M_t(f)-M_s(f)$:
\begin{align*}
E\big[(M_t(f)-M_s(f))^2\big]&=E\Big[ \liminf_{n\to \infty} \big(M^n_t(f)-M^n_s(f)\big)^2 \Big]\\ 
&\le \liminf_{n\to \infty} E\Big[ \big(M^n_t(f)-M^n_s(f)\big)^2 \Big]\\
&\le 8 \|f\|_\infty^2\int_s^t \sum_{x \in \bar{A}} E[g(\eta_s(x)] ds.
\end{align*}
 It then follows from the main result of \cite{Loynes} that $M_t(f)$ is a martingale. Now, for item i) of the theorem, \eqref{integrated} follows from Fubini's Theorem and Lemma \ref{asymmetric}, and for item ii) of the theorem, \eqref{forward} follows from Lemma \ref{finite-mean} and Proposition \ref{forwardeqs}.
\end{proof}

\subsection{Proof of Lemmas \ref{asymmetric} and \ref{finite-mean} and of Proposition \ref{forwardeqs}}

We will now compare two zero-range processes starting from elements  $\eta, \xi \in X_f$. 
The particles of $\eta$ will be classified as $\gamma$   and  $\rho$ particles. This means that
at any time $t$ and any site $x$, we will have $\eta_t(x)= \gamma_t(x)+\rho_t(x)$.
This comparison will be made thanks to an auxiliary process $\big((\gamma_t,\rho_t,\xi_t),\,t\ge 0\big)$ on  $X_f^3$. 

Rather than writing down a long generator we state the rates of this process for an arbitrary configuration $(\gamma,\rho,\xi)$. To do so 
we introduce some further notation:
for $\gamma, \rho\in X_f$, define
\begin{equation*}
\rho^{0}(x)=\begin{cases}\rho(0)+1 & x=0, \\ \rho(x)  & x\neq 0, \end{cases}
\end{equation*} 
\begin{equation*}
\gamma^{-1}(x)=\begin{cases}\gamma(-1)-1 & x=-1  \\ \gamma(x)  & x\neq -1, \end{cases}
\end{equation*} 
and
\begin{equation*}
\gamma^{1}(x)=\begin{cases}\gamma(1)-1 & x=1 \\ \gamma(x)  & x\neq 1, \end{cases}
\end{equation*} 
We now state the rates of the auxiliary process,
\begin{enumerate}
\item For $x\in \Z$, at rate $pg\big(\gamma(x)\wedge \xi(x)\big)$ the process jumps to  $(\gamma^{x,x+1},\rho,\xi^{x,x+1})$.
\item For $x\in \Z$, at rate $qg\big(\gamma(x)\wedge \xi(x)\big)$ the process jumps to  $(\gamma^{x,x-1},\rho,\xi^{x,x-1})$.
\item For $x\in \Z$, at rate $p\big[g\big(\xi(x)\big)-g\big(\gamma(x)\wedge \xi(x)\big)\big]$ the process jumps to  $(\gamma,\rho,\xi^{x,x+1})$.
\item For $x\in \Z$, at rate $q\big[g\big(\xi(x)\big)-g\big(\gamma(x)\wedge \xi(x)\big)\big]$ the process jumps to  $(\gamma,\rho,\xi^{x,x-1})$.
\item For $x\in \Z$, at rate $p\big[g\big(\gamma(x)+\rho(x)\big)-g\big(\gamma(x)\big]$ the process jumps to  $(\gamma,\rho^{x,x+1},\xi)$.
\item For $x\in \Z$, at rate $q\big[g\big(\gamma(x)+\rho(x)\big)-g\big(\gamma(x)\big)\big]$ the process jumps to  $(\gamma,\rho^{x,x-1},\xi)$.
\item For $x\in \Z\setminus \{-1\}$, at rate $p\big[g\big(\gamma(x)\big)-g\big(\gamma(x)\wedge \xi(x)\big)\big]$ the process jumps to  
$(\gamma^{x,x+1},\rho,\xi)$.
\item For $x\in \Z\setminus \{1\}$, at rate $q\big[g\big(\gamma(x)\big)-g\big(\gamma(x)\wedge \xi(x)\big)\big]$ the process jumps to  
$(\gamma^{x,x-1},\rho,\xi)$.
\item At rate $p\big[g\big(\gamma(-1)\big)-g\big(\gamma(-1)\wedge \xi(-1)\big)\big]$ the process jumps to $(\gamma^{-1},\rho^0,\xi)$ if $\gamma(0)\geq \xi(0)$,
and to $(\gamma^{-1,0},\rho,\xi) $ if  $\gamma(0)< \xi(0)$.
\item At rate $q\big[g\big(\gamma(1)\big)-g\big(\gamma(1)\wedge \xi(1)\big)\big]$ the process jumps to $(\gamma^{1},\rho^0,\xi)$ if $\gamma(0)\geq \xi(0)$,
and to $(\gamma^{1,0},\rho,\xi) $ if  $\gamma(0)< \xi(0)$.
\end{enumerate}
The processes $(\xi_t,\, t\ge 0)$ and $(\gamma_t,\,t\ge 0)$ follow basic coupling \eqref{basic} except at configurations $\gamma, \xi$ such that $\gamma(-1)>\xi(-1)$ or $\gamma(1)>\xi(1)$, and 
$\gamma(0)\ge \xi(0)$. Then at rate $p\big[g(\gamma(-1))-g(\xi(-1))\big]$ ($q\big[g(\gamma(1))-g(\xi(1))\big]$, resp.) a $\gamma$-particle is removed from $-1$ ($1$, resp.), and added to the $\rho$-process, at $0$.

To derive some of the properties of this process it is convenient to distinguish $\rho$-particles from each other. To do so, we label them with positive integers  and adopt the convention that whenever a $\rho$-particle has to jump from a site $x$ the jump is performed by the 
particle having the lowest label among those present at $x$. If there are $k$ $\rho$-particles in total at time $0$ we label them as $1,\dots,k$ in an arbitrary manner and then each time a $\rho$-particle is created we attribute to it the
 lowest available  label in $\N$. We now let $\Psi(t)$ be the number of $\rho$-particles in the system at time $t$, and denote by $Z_i$  the total number of returns 
 (that is, up to time $\infty$) to the origin of the $i$-th $\rho$-particle. 
We can now state some properties of the process $\big((\gamma_t,\rho_t,\xi_t),\,t\geq 0\big)$.
\begin{enumerate}
\item[{\it i)}] If $\eta_t:=\gamma_t+\rho_t$ (coordinatewise) then $(\eta_t)_{t\geq 0}$ is a zero-range process with rate function $g$.
\item[{\it ii)}]  The process $(\xi_t)_{t\geq 0}$ is a zero-range process with rate function $g$.
\item[{\it iii)}] If $\gamma_0(0)\le \xi_0(0)$ then $\gamma_t(0)\le \xi_t(0)$ for all $t\ge 0$.
\item[{\it iv)}]  Assume $p\neq q$. If the initial configuration is such that  $\rho(x)=0$ for all $x\neq 0$  then the conditional distribution of $Z_1,\dots,Z_k$ given $\{\Psi(t)=k\}$  corresponds to i.i.d.~geometrically distributed random variables.
\end{enumerate}
The first two properties are immediate consequences of the jump rates. Note that they imply that the total number of particles is conserved. Hence, for any initial configuration in $X_f^3$ the jump rates are bounded.  The third property follows from items $9$ and $10$ and the fact that the rates are non-decreasing. The fourth property is a consequence of the following facts, which are derived from the jump rates.
\begin{enumerate}
\item The trajectories of the $\rho$-particles have initial position at the origin.
\item The creation of $\rho$-particles only depends on the evolution of $\gamma$ and $\xi$-particles.
\item Due to the convention adopted for the jumps of the $\rho$-particles, the discrete-time skeletons of the continuous-time trajectories perform i.i.d.~$(p,q)$-random walks,  which are independent of the evolution of the $\gamma,\,\xi$ particles.
\end{enumerate}

Our next lemma follows  from these considerations.

\begin{lemma}\label{countingrho}
 Let $d=1$, $\{g(n)\}_{n\ge 0}$ be as in \eqref{crecientes}, and let $\{p(x,y)\}_{x, y \in \Z}$ be as in \eqref{1dhypo} with $p\neq q$. Assume that at time $0$ there are no $\rho$-particles off the origin and let $H(t)$ be the number of jumps off $0$ performed by $\rho$-particles over the interval $[0,t]$. Then there exists $0<C<\infty$ independent of $t$ such that 
 \[
 E\big[H(t)\big]\leq C E\big[\sum_x \rho_t(x)\big].
 \]
 \end{lemma}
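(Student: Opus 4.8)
The plan is to bound the number of departures from the origin by a constant multiple of the number of $\rho$-particles, exploiting the fact that each $\rho$-particle performs a transient $(p,q)$-random walk (since $p\neq q$), whose number of returns to $0$ is geometric. Note first that, by the jump rates, $\rho$-particles are created but never destroyed, so $\sum_x \rho_t(x)=\Psi(t)$, the number of $\rho$-particles present at time $t$; thus the target inequality is $E[H(t)]\le C\,E[\Psi(t)]$.

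I would begin with a pathwise upper bound on $H(t)$. Fix the $i$-th $\rho$-particle; by the first of the three facts preceding the lemma, it is created at the origin, so its visits to $0$ consist of this initial visit together with its $Z_i$ returns. Each visit to $0$ can be followed by at most one jump off $0$ (the particle must return to $0$ before it can leave it again), so the total number of jumps off $0$ made by this particle over all of $[0,\infty)$, and a fortiori over $[0,t]$, is at most $1+Z_i$. Since the particles that perform jumps during $[0,t]$ are exactly those created by time $t$, namely particles $1,\dots,\Psi(t)$, summing gives the key bound
$$
H(t)\ \le\ \sum_{i=1}^{\Psi(t)}\big(1+Z_i\big)\ =\ \Psi(t)+\sum_{i=1}^{\Psi(t)}Z_i .
$$

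Next I would take expectations and evaluate the random-length sum using property {\it iv)}. Conditionally on $\{\Psi(t)=k\}$, the variables $Z_1,\dots,Z_k$ are i.i.d.\ geometric with common mean $m:=\theta/(1-\theta)<\infty$, where $\theta<1$ is the return probability of the transient $(p,q)$-walk. Since all terms are non-negative, Tonelli and this conditioning give
$$
E\Big[\sum_{i=1}^{\Psi(t)}Z_i\Big]\ =\ \sum_{k\ge 0} P(\Psi(t)=k)\,k\,m\ =\ m\,E[\Psi(t)],
$$
whence $E[H(t)]\le (1+m)\,E[\Psi(t)]=(1+m)\,E\big[\sum_x\rho_t(x)\big]$, so the choice $C=1+m=1/(1-\theta)$ is finite and independent of $t$.

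The step requiring the most care is the pathwise bound: one must check that departures from $0$ are in one-to-one (or at worst one-to-fewer) correspondence with visits to $0$, so that the number of returns $Z_i$ genuinely controls the departures, and that restricting the time horizon from $[0,\infty)$ to $[0,t]$ only decreases the count. The transience hypothesis $p\neq q$ is essential both to guarantee $Z_i<\infty$ a.s.\ and to make the geometric mean $m$ finite; this is exactly what property {\it iv)} supplies, whose applicability is ensured by the standing assumption that there are no $\rho$-particles off the origin at time $0$.
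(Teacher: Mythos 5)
Your proof is correct and follows essentially the same route as the paper: the pathwise bound $X_i\le Z_i+1$ on the number of departures from $0$ per particle, summed over the $\Psi(t)$ particles created by time $t$, followed by an evaluation of the random sum via property \emph{iv)} (the paper phrases this as independence of $\Psi(t)$ from the $Z_i$, you as conditioning on $\{\Psi(t)=k\}$ — the same Wald-type computation). Your write-up is merely a more detailed version of the paper's argument.
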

\begin{proof}{Lemma}{countingrho}
Let $X_i$ be the number of jumps off $0$ performed by the i-th $\rho$-particle and let $N(t)$ be the number of $\rho$-particles created in the time interval $[0,t]$. Then $X_i\leq Z_i+1$ for all $i\geq 1$, and since we assume no $\rho$-particles are present at time $0$, $N(t)=\sum_x \rho_t(x)=\psi(t)$. By property iv) above $\psi(t)$ is independent of the random variables $Z_i$. Hence,
\[
E\big[H(t)\big]\leq E\Big[ \sum_{i=1}^{N(t)}(Z_i+1)\Big]=E\big[N(t)\big]\big(E[Z_1]+1\big)
\]
and the lemma follows.
\end{proof}

We recall the mapping $j: X_f^2\to \R$
\[
j(\gamma,\xi)=\Big[\sup_{n\leq 0,m\geq 0}\sum_{x=n}^m (\gamma(x)-\xi(x))\Big]^+ .
\]

\begin{lemma}\label{aux}
Let $\big((\gamma_t,\rho_t,\xi_t),\, t\ge 0\big)$ be the Markov Process on $X_f^3$ with dynamics determined by the rates $1-10$ above.
Denote by $N(t)$ the number of $\xi$-particles jumping off $0$ in the time interval $[0,t]$. Then for all $t\geq 0$
$$\sum_x \rho_t(x) \leq \sum_x \rho_0(x) + N(t) + j(\gamma_0,\xi_0).$$ 
\end{lemma}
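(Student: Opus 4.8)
The plan is to control the total number of $\rho$-particles by comparing it against the discrepancy functional $j$, tracking the single quantity
\[
\Phi(t) := j(\gamma_t,\xi_t) + \Psi(t), \qquad \Psi(t) := \sum_x \rho_t(x),
\]
and showing that $\Phi(t)-N(t)$ is non-increasing in $t$. Since $j\ge 0$ and the configuration lies in $X_f^3$ with conserved total particle number, the rates are bounded and only finitely many jumps occur on $[0,t]$ almost surely, so it suffices to check the variation of $\Phi$ and $N$ across the individual jumps. Once this is done, $\Psi(t)\le \Phi(t)\le \Phi(0)+N(t)=\sum_x\rho_0(x)+j(\gamma_0,\xi_0)+N(t)$, which is the assertion. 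First I would record two elementary facts read off the rates $1$--$10$: the total $\Psi(t)$ is non-decreasing, being untouched by the $\rho$-jumps (rates $5$,$6$) and increased by exactly one unit at each $\rho$-creation, i.e. the branches of rates $9$,$10$ that fire when $\gamma(0)\ge\xi(0)$; and, away from these creation branches, $(\gamma_t,\xi_t)$ evolves exactly by the basic coupling of two zero-range processes.

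Next I would dispose of every event in which $(\gamma,\xi)$ moves by basic coupling, namely rates $1$--$4$, $7$, $8$ and the non-diverted branches of $9$,$10$ (acting when $\gamma(0)<\xi(0)$, a genuine $\gamma$-particle jump onto the origin), together with the $\rho$-jumps. The $\rho$-jumps change nothing in $\Phi$. For the basic-coupling events I would invoke verbatim the case analysis from the proof of Lemma \ref{l3}, with $\gamma$ in the role of $\zeta$ and $\xi$ in that of $\psi$: $j(\gamma_t,\xi_t)$ is non-increasing except at a jump of a $\xi$-particle off the origin, where it increases by at most one, while $\Psi$ stays fixed. Hence across all such events $\Phi$ can increase only at the $\xi$-off-$0$ jumps, by at most one each, and these are counted by $N(t)$.

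The heart of the argument is the behaviour of $\Phi$ at a $\rho$-creation, where $\Psi$ jumps up by one; I must show that $j$ simultaneously drops by exactly one, so that $\Phi$ is conserved. Writing $s(\gamma,\xi)=\sup_{n\le 0,\,m\ge 0}\sum_{x=n}^m(\gamma(x)-\xi(x))$, so that $j=s^+$, I would exploit the product decomposition
\[
s(\gamma,\xi)=A+B,\qquad A=\max_{n\le 0}\sum_{x=n}^{-1}(\gamma(x)-\xi(x)),\quad B=\max_{m\ge 0}\sum_{x=0}^{m}(\gamma(x)-\xi(x)),
\]
valid because the suprema over $n\le 0$ and over $m\ge 0$ decouple (and $A\ge 0$ from the empty sum at $n=0$). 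A creation at site $-1$ fires only when $\gamma(-1)>\xi(-1)$ and $\gamma(0)\ge\xi(0)$, and it removes one $\gamma$-particle from $-1$ while leaving $\xi$ fixed. The first condition gives $A\ge\gamma(-1)-\xi(-1)\ge 1$, the second gives $B\ge\gamma(0)-\xi(0)\ge 0$, so $s=A+B\ge 1$ and $j=s$ before the jump; removing the particle lowers every sum $\sum_{x=n}^{-1}$ with $n\le -1$ by one and leaves $B$ untouched, so $A$, and hence $j=s$, decreases by exactly one. The creation at site $1$ is symmetric with the roles of $A$ and $B$ exchanged: there $\gamma(1)>\xi(1)$ forces $B$ to be attained at some $m\ge 1$, and removal at $1$ lowers $B$ by exactly one. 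Thus $\Phi$ is conserved at every creation.

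Combining the three cases shows $\Phi(t)-N(t)$ is non-increasing, which proves the lemma. I expect the only delicate point to be the creation step: one must verify that the removal of the $\gamma$-particle genuinely lowers $s$, rather than leaving it unchanged because the supremum happens to be attained at $n=0$ (resp. $m=0$), and this is precisely what the firing conditions $\gamma(-1)>\xi(-1)$ (resp. $\gamma(1)>\xi(1)$) and $\gamma(0)\ge\xi(0)$ guarantee. Everything else is a transcription of the discrepancy bookkeeping already carried out in Lemma \ref{l3}, and no use of $p\neq q$ is needed.
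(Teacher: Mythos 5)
Your proof is correct and follows essentially the same route as the paper's: track the quantity $j(\gamma_t,\xi_t)+\sum_x\rho_t(x)-N(t)$, note that $j$ can only increase (by at most one) when a $\xi$-particle jumps off the origin, and that $j$ drops by one at each $\rho$-creation, then conclude from $j\ge 0$. The only difference is that the paper merely asserts the unit decrease of $j$ at a creation event, whereas your decomposition $s=A+B$ together with the firing conditions $\gamma(\mp 1)>\xi(\mp 1)$, $\gamma(0)\ge\xi(0)$ supplies the verification the paper omits.
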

\begin{proof}{Lemma}{aux}
First note that $j(\gamma_s,\xi_s)$ can increase by at most one unit at any given time, and that this can only occur when a $\xi$-particle jumps off $0$. We omit the proof of this assertion since it follows the same arguments as the proof of Lemma \ref{l3}. Then, note that $j(\gamma_s,\xi_s)$ decreases by one unit when a $\rho$-particle is created.
Therefore, 
$$\sum_x\rho_t(x) - \sum_x \rho_0(x)\leq N(t)+j(\gamma_0,\xi_0)-j(\gamma_t,\xi_t)$$
and the lemma follows from the fact that $j(\gamma_t,\xi_t)$ is non-negative.
\end{proof}

\begin{proof}{Lemma}{asymmetric}
In this proof we adopt the following notation: $E^{\eta}$ and $E^{\gamma,\rho,\xi}$ will be the expectations associated with the zero-range process starting from $\eta$, and with the auxiliary process starting from $(\gamma,\rho,\xi)$, respectively.
Since the construction is translation invariant, it suffices to show that \eqref{fg1} holds when  $x=0$.\\
 We wish to compare $E^{\eta}\big[\int_0^tg(\eta_s(0))ds\big]$ and  $E^{\xi}\big[\int_0^tg(\eta_s(0))ds\big]$ where $\eta$ and $\xi$ are arbitrary 
 elements of $X_f$. To do so we will apply Lemma \ref{aux} to an  initial configuration $(\gamma,\rho,\xi)$
 such that for all $x\neq 0$,  $\gamma(x)=\eta(x)$ and $\rho(x)=0$,  $\gamma(0)=\eta(0)\wedge \xi(0)$ and $\rho(0)=[\eta(0)-\gamma(0)]^+$.    
As noted before $\gamma_t +\rho_t$ is a zero-range process with initial configuration $\eta$. Hence 
\begin{equation}
\label{boris1}
\begin{split}
&E^{\eta}\big[\#\{\text{$\eta$-particles that have jumped off } 0 \text{ over }[0,t]\}\big]\\
&\qquad=E^{\gamma, \rho,\xi}\big[\#\{\gamma\text{-particles that have jumped off } 0 \text{ over }[0,t]\}\big]\\
&\qquad\quad+E^{\gamma, \rho,\xi}\big[\#\{\rho\text{-particles that have jumped off } 0 \text{ over }[0,t]\}\big].
\end{split}
\end{equation}
Due to property {\it iii)} of the construction, and recalling that  $N(t)$ stands for the number of $\xi$-particles jumping off $0$ in the time interval $[0,t]$, we have
\begin{align*}
& E^{\gamma, \rho,\xi}\big[\#\{\gamma\text{-particles that have jumped off } 0 \text{ over }[0,t]\}\big]\le E^{\gamma, \rho,\xi}\big[N(t)\big].
\end{align*}
From  \eqref{boris1}, the previous inequality, and Lemma \ref{countingrho}, we get
\begin{align*}
&E^{\eta}\big[\#\{\text{$\eta$-particles that have jumped off } 0 \text{ over }[0,t]\}\big] \\
&\hspace{4cm}
\le E^{\gamma, \rho,\xi}\big[N(t) \big]+ CE^{\gamma, \rho,\xi}\big[\sum_x \rho_t(x)],
\end{align*}
and using that in the auxiliary process the $\xi$-particles evolve as in a zero-range process, we conclude
\begin{align}
\label{borisf}
&E^{\eta}\big[\#\{\text{$\eta$-particles that have jumped off } 0 \text{ over }[0,t]\}\big]
\notag\\
&\hspace{4cm}\le E^{\xi}\big[N(t)\big]+CE^{\gamma,\rho,\xi}\big[\sum_x \rho_t(x)].
\end{align}
Let us now fix $\eta \in Y$ and pick $\alpha >0$ and $\beta>0$ such that 
\[
\int \xi(0)d\mu_{\alpha}(\xi) >\beta > \max \Big\{\limsup_n \frac {1}{n}\sum_{x=1}^n \eta(-x), \limsup_n \frac {1}{n}\sum_{x=1}^n \eta(x)\Big\}.
\]
We then apply \eqref{borisf} to $\eta^n$ and a random configuration $\xi$ distributed according to $\Pi_n(\mu_{\alpha})$, where $\Pi_n$ was defined in \eqref{pi_n(mu)}. We obtain
\begin{align}
\label{ukip}
&E^{\eta^n}\big[\#\{\text{$\eta$-particles that have jumped off } 0 \text{ over }[0,t]\}\big] \notag \\
 &\le \int E^{\xi}\big[N(t)\big]d\big(\Pi_n(\mu_{\alpha})\big)(\xi)+\int CE^{\gamma^n,\rho^n,\xi}\big[\sum_x \rho_t(x)\big]d\big(\Pi_n(\mu_{\alpha})\big)(\xi),
\end{align}
where for all $x\neq 0$,  $\gamma^n(x)=\eta^n(x)$ and $\rho^n(x)=0$,  $\gamma^n(0)=\eta^n(0)\wedge \xi(0)$ and $\rho^n(0)=[\eta^n(0)-\gamma^n(0)]$.

The first term of the right hand side above is
equal to
\[
\int E^{\xi}\Big[\int_0^t  g(\xi_s(0))ds \Big]\,d\big(\Pi_n(\mu_{\alpha})\big)(\xi)\le
\int E^{\xi}\Big[\int_0^t  g(\xi_s(0))ds \Big]\,d(\mu_{\alpha})(\xi)
\]
Since $\mu_{\alpha}$ is invariant, this is equal to
\[t \int g(\xi(0))\,d(\mu_{\alpha})(\xi)=t\alpha<\infty.\]
To obtain an upper bound for the second term in \eqref{ukip} we use Lemma \ref{aux} as follows:
\begin{equation}
\label{farage}
\begin{split}
&\int E^{\gamma^n, \rho^n,\xi}\big[\sum_x \rho_t(x)\big]d\big(\Pi_n(\mu_{\alpha})\big)(\xi) \\
&\le \int E^{\gamma^n, \rho^n,\xi}\Big[ \sum_x \rho_0(x)+N(t)+j(\gamma^n,\xi)\Big]d\big(\Pi_n(\mu_{\alpha})\big)(\xi) \\
&\le \int E^{\gamma^n, \rho^n,\xi}\Big[\eta(0)+N(t)+j(\gamma^n,\xi)\Big]\,d\big(\Pi_n(\mu_{\alpha})\big)(\xi) \\
&\le \eta(0)+ \int  E^{\xi}\big[N(t))\big]\,d\big(\Pi_n(\mu_{\alpha})\big)(\xi)+\int  j(\eta^n,\xi)\,d\big(\Pi_n(\mu_{\alpha})\big)(\xi)  \\
&\le \eta(0)+ t\alpha+
\int  j(\eta^n,\xi) d\big(\Pi_n(\mu_{\alpha})\big)(\xi).
\end{split}
\end{equation}
To complete the proof we show that 
$\int  j(\eta^n,\xi) d(\Pi_n(\mu_{\alpha}))(\xi)$ is bounded uniformly in $n$. To do so, note that
\[
 \int  j(\eta^n,\xi) d\big(\Pi_n(\mu_{\alpha})\big)(\xi)\leq  \int  j(\eta,\xi) d(\mu_{\alpha})(\xi).
\]
It now suffices to prove that
$\int  j(\eta,\xi) d(\mu_{\alpha}(\xi))$ is finite.  This is done as follows: let $k$ be such that
\[
\max \Big\{\sup_{m\geq k}\frac{1}{m}\sum_{x=1}^m \eta(-x), \,\sup_{m\geq k}\frac{1}{m}\sum_{x=1}^m \eta(x)\Big\}< \beta.
\]
 Then, define  
\[
 L(\xi)=\inf \Big\{\ell: \inf_{m\geq \ell}\frac{1}{m}\sum_{x=1}^m \xi(-x)\geq \beta,\inf_{m\geq \ell}\frac{1}{m}\sum_{x=1}^m \xi(x)\geq \beta\Big\} .
 \]
If $\xi$ is distributed according to $\mu_{\alpha}$ the random variables $\xi(x), x\in \Z$ are i.i.d.~and by Lemma \ref{inv-m} have finite exponential moments. Hence, we can apply standard large deviation results to conclude that
$\mu_{\alpha}(L(\xi)\geq \ell)$ decays exponentially with $\ell$.  Now define  $M(\xi)=\max \{L(\xi), k\} $. Then
$j(\eta,\xi) \leq \sum_{x=-M(\xi) }^{M(\xi)}\eta(x)$. Hence
\begin{equation*}
\int j(\eta,\xi) d\mu_{\alpha}(\xi)\leq \beta \Big(1 +2\int M(\xi)d\mu_{\alpha}(\xi)\Big)<\infty
\end{equation*}
and the proof is completed.
\end{proof}

The proof of Lemma \ref{asymmetric} relied on the fact that the number of visits to the origin of a random walk with non-vanishing drift 
 has finite expectation. This fails in the symmetric case, and to prove that the conclusion of the lemma still holds in 
this case, we will 
restrict the family of rate functions to 
those having at most exponential growth. 

\begin{proof}{Lemma}{finite-mean} 
It is enough to prove the lemma for $x=0$. Recall the graphical representation from $\S$ \ref{sec:1d}. We will use it to simultaneously construct the zero-range process for all nearest neighbour dynamics determined by 
jump probabilities $(p,q)$.

Fix $n\in \N$ and let $\eta^n$ be the truncated configuration in \eqref{eta-n}.  Under the graphical construction and not counting multiple visits, the number of particles initially to the left of the origin that at any point during $[0,t]$ have reached $0$ is maximal for the totally asymmetric dynamics $(p,q)=(1,0)$, whereas the number of particles initially to the right of the origin that ever reach it during $[0,t]$ is maximal for the opposite totally asymmetric dynamics $(0,1)$. Indeed, enumerate particles in the initial $\eta^n$ configuration according to their distance to the origin and any arbitrary order for particles occupying the same site. Let then $X_{i,n}$ be the position of the $i$-th particle in $\eta^n$ at time $0$, and let us denote by $X^{(p,q)}_{i,n}(t)$ its position at time $t$ under the $(p,q)$ dynamics. Furthermore, let us stipulate that on the event that there is a jump out of a site in the graphical construction, the particle with the highest  (lowest) index at the site is removed if 
the direction of the jump is to the right (left). Then it is easy to 
check that the graphical construction ensures that 
\[
X_{i,n}^{(p,q)}(t)\le X_{i,n}^{(1,0)}(t)\quad \text{ and }\quad X_{i,n}^{(p,q)}(t)\ge X_{i,n}^{(0,1)}(t)\quad  \text{ for all } t\ge 0.
\]
In particular, if a particle initially to the left of the origin ever reached it during $[0,t]$ for the $(p,q)$ dynamics, the same must hold for the $(1,0)$ dynamics, with an analogous statement holding for the particles initially to the right of the origin and the $(0,1)$ dynamics. For the rest of the proof, we continue using the superscript $(p,q)$ to specify which particular dynamics is being referred to.

Fix $t>0$. For the $(p,q)$ dynamics, we have
\begin{align*}
&\hspace{-.7cm}\sharp\big\{ \text{particles that reach $0$ over $[0,t]$} \big\}\le \sharp\big\{i\in \Z,\, X_{i,n}=0\big\} \\
&\hspace{2cm}+\sharp\Big\{i\in \Z,\, X_{i,n}<0,\,X^{(p,q)}_{i,n}(s)\ge 0 \text{ for some } s\in [0,t]\Big\} \\
&\hspace{2cm}+\sharp\Big\{i\in \Z,\, X_{i,n}>0,\,X^{(p,q)}_{i,n}(s)\le 0 \text{ for some } s\in [0,t]\Big\}.
\end{align*}
Let $r\ge 1$. Due to the bound $g(k)\le e^{\lambda k}, k\in \Z$, and the previous observations,
\begin{align*}
&\hspace{-.7cm} g\big(\eta^{n,(p,q)}_s(0)\big)^r\le e^{\lambda r \eta^n(0)}\,e^{\lambda r \,\sharp\big\{ i\in \Z,\,X_{i,n}<0,\,X_{i,n}^{(1,0)}(t)\ge0\big\}}\\
&\hspace{6cm} \times e^{\lambda r\,\sharp\big\{ i\in \Z,\,X_{i,n}>0,\,X_{i,n}^{(1,0)}(t)\le0\big\}},
\end{align*}
uniformly for $s\in [0,t]$. Taking expectations and applying the Cauchy-Schwarz inequality, we get
\begin{align*}
&\hspace{-.9cm}\sup_{s\in [0,t]}\EE^{\eta^n}\Big[ g\big(\eta^{n,(p,q)}_s(0)\big)^r\Big]\le  e^{\lambda r \eta^n(0)}\,
\Big[ \EE^{\eta^n}\Big( e^{2\lambda r \,\sharp\big\{ i\in \Z,\,X_{i,n}<0,\,X_{i,n}^{(1,0)}(t)\ge0\big\}}\Big) \Big]^{\frac{1}{2}}
\\
&\hspace{4cm}\times \Big[ \EE^{\eta^n}\Big( e^{2\lambda r\,\sharp\big\{ i\in \Z,\,X_{i,n}>0,\,X_{i,n}^{(1,0)}(t)\le 0\big\}}\Big) 
\Big]^{\frac{1}{2}}. 
\end{align*}
We need to show that the last two factors on the right above are uniformly bounded in $n$. We treat the first, the proof
for the second one is completely analogous.

To do so, given $(\xi, \xi') \in Y^2$ such that 
\begin{align*}
 j'(\xi,\xi')=\sup_{n\geq 1}\big[ \sum_{x=-n}^{-1}(\xi(x)-\xi'(x))\Big]^+<\infty,
\end{align*}
 we consider the coupled versions of two zero-range processes $(\xi_s,\xi'_s)$ following the $(1,0)$~dynamics, with initial states $(\xi,\xi')$ obtained by means of the graphical representation. Let  $N(t,\xi)$ and $N(t,\xi')$ be the number of particles jumping from $-1$ to $0$ over the time interval $[0,t]$ for the configurations 
$\xi$ and $\xi'$
respectively.
It is now easy to verify that
\begin{align}
\label{eqn2}
 j'(\xi_s,\xi'_s)+N(s,\xi)-N(s,\xi') \le j'(\xi,\xi').
\end{align}
Indeed, $j'$ only increases when $N(s,\xi')$ increases and $N(s,\xi)$ remains constant, and $j'$ decreases whenever $N(s,\xi)$
increases and $N(s,\xi')$ remains constant. But since $j'$ is nonnegative, \eqref{eqn2} implies that
\begin{equation}
\label{eqn3}
 N(t,\xi)\leq N(t,\xi') +j'(\xi,\xi').
\end{equation}
For $\eta \in Y$ the configuration in the statement of the lemma, define
\begin{align}
&\alpha=\max\Big\{ \limsup_{n\in \N}\frac{1}{n}\sum_{x=1}^n \eta(-x),\,\limsup_{n\in \N}\frac{1}{n}\sum_{x=1}^n \eta(x)  \Big\}, \notag
\\
&N_0=\inf\big\{n\in \N,\, \forall  l\ge n: \frac{1}{l} \sum_{x=1}^l \eta(-x) \le \alpha+1 \text{ and } \frac1l\sum_{x=1}^l \eta(x) \le \alpha+1 \big\} \label{n0sym},
\end{align}
and consider $\phi_0>0$ such that $E^{\mu_{\phi_0}}[\eta(0)]=\rho>\alpha+1$. For $\sigma\sim \mu_{\phi_0}$, let 
\[
K(\sigma)=\inf\big\{n\in \N,\,\text{ for all } l\ge n: \sum_{x=1}^l \eta(-x)\le \sum_{x=1}^l \sigma(-x) \big\}<\infty\ \ \text{a.s..}
\]
With this definition we have $j'(\eta,\sigma)\le \sum_{x=-1}^{K(\sigma)} \eta(-x)$. Note moreover that for $\xi \in Y$, $\# \{ i\in \Z,\ X_{i,n}<0, X_{i,n}^{(1,0)}(t)\ge 0\}=N(t,\xi^n)$.
Then, by \eqref{eqn3} applied to $\eta$ and $\sigma$, we get
\begin{equation*}
\EE^{\eta^n}\Big[ e^{2\lambda r\,\sharp\big\{ i\in \Z,\,X_{i,n}<0,\,X_{i,n}^{(1,0)}(t)\ge0\big\}}\Big] \le \EE^{\mu_{\phi_0}}\big[  e^{2\lambda r\sum_{x=1}^{K(\sigma)} \eta(-x)}\,
e^{2\lambda r\, N(t,\sigma) } \big].
\end{equation*}
By Cauchy-Schwarz again,
\begin{align*}
&E^{\eta^n}\Big[ e^{2\lambda r\,\sharp\big\{ i\in \Z,\,X_{i,n}<0,\,X_{i,n}^{(1,0)}(t)\ge0\big\}}\Big]  \\
&\hspace{3.5cm}\le E^{\mu_{\phi_0}}\big[  e^{4\lambda r \sum_{x=1}^{K(\sigma)} \eta(-x)}\big]^{\frac{1}{2}}
\EE^{\mu_{\phi_0}}\big[e^{4\lambda r\, N(t,\sigma) } \big]^{\frac{1}{2}}.
\end{align*}
By the fact that the invariant distribution $\mu_{\phi_0}$ is a product measure, the occupation number at the origin $\{\sigma^{(1,0)}_s(0),\,s\ge 0\}$, $\sigma^{(1,0)}_0\sim \mu_{\phi_0}$, is a birth and death process at equilibrium with constant birth rate 
$\phi_0=E^{\mu_{\phi_0}}[g(\sigma(-1))]$ (and death rate $g(\sigma^{(1,0)}_s(0))$). Therefore $\big(N_t:=N(t,\sigma),\,t\ge 0\big)$ is a Poisson process with intensity $\phi_0$, and finite exponential moment
\begin{equation*}
\EE^{\mu_{\phi_0}}\big[e^{4 r\lambda \, N_t } \big]=e^{\phi_0(e^{4\lambda r}-1)\,t}\,.
\end{equation*}
It remains to prove that 
$ \EE^{\mu_{\phi_0}}\big[  e^{4\lambda r\sum_{x=1}^{K(\sigma)} \eta(-x)}\big]<\infty$.
We can bound
$K(\sigma)\le N_0+\tilde{K}(\sigma)$, if $N_0$ is as in \eqref{n0sym} and we define 
\[
\tilde{K}(\sigma)=\sup\Big\{k\in \N, \sum_{x=1}^k \sigma(-x)\le (\alpha+1)k \Big\}.
\]
Then 
\begin{align*}
\EE^{\mu_{\phi_0}}\big[  e^{4\lambda r\sum_{x=1}^{K(\sigma)} \eta(-x)}\big]&\le \EE^{\mu_{\phi_0}}\big[  e^{4\lambda r\sum_{x=1}^{N_0+\tilde{K}(\sigma)} \eta(-x)}\big]\\
&\le\EE^{\mu_{\phi_0}}\big[ e^{\gamma (N_0+\tilde{K}(\sigma))}\big]\\
&=e^{\gamma N_0}\EE^{\mu_{\phi_0}}\big[ e^{\gamma \tilde{K}(\sigma)}\big]
\end{align*}
with $\gamma:=4\lambda r(\alpha+1)$.
By the inclusion of events 
\[\{\tilde{K}(\sigma)=k\}\subseteq \left\{\sum_{x=1}^k \sigma(-x) \le (\alpha+1) k \right\},\]  it remains  to show that
\begin{equation*}
\sum_k e^{\gamma k}\,\mu_{\phi_0}\Big( \sum_{x=1}^k \sigma(-x) \le (\alpha+1) k\Big)<\infty,
\end{equation*}
but this follows from \eqref{ld} if $\phi_0$ is large enough. 
Hence
\[
\sup_{s\in [0,t]}\EE^{\eta^n}\Big[g\big(\eta^{n,(p,q)}_s(0)\big)^r\Big]
\]
is bounded uniformly in $n$, and the lemma follows by taking the limit $n\to \infty$.

\end{proof}

\begin{proof}{Proposition}{forwardeqs}
Let $f:X\to \R$ be a bounded, local function with support $A$, define $\bar{A}:=\big\{y\in \Z,\, \inf_{x\in A}\|y-x\|\le 1\big\}$.
Hypothesis \eqref{onemore}
and the graphical representation described in $\S$\ref{sec:1d} imply that for any $x\in \Z$ and $t>0$  the process $(\eta_s(x);\,s\ge 0)$ has finitely many jumps over $[0,t+1]$, and 
\[
P^{\eta_0}\big(\{\eta_t(x)\neq \eta_{t-}(x) \text{ or } \eta_t(x)\neq \eta_{t+}(x)\text{ for some } x\in \bar{A}\}\big)=0.
\]
Therefore $P(\lim_{s\rightarrow t} Lf(\eta_s)=Lf(\eta_t))=1\ \ \forall t\geq 0$. Since by \eqref{onemore} the random variables $g(\eta_s(x))$, $s\in [0,t+1]$ are uniformly integrable for any
finite $t$,  $E^{\eta_0}[Lf(\eta_s)]$ is continuous in $s\in [0, t+1]$, and from \eqref{integrated} we conclude that $E^{\eta_0}[f(\eta_t)]$ is differentiable with derivative
\begin{equation*}
\frac{d}{dt}E^{\eta_0}[f(\eta_t)]=E^{\eta_0}\big[Lf(\eta_t)\big],
\end{equation*}
which is the forward equation.
\end{proof}

\section{Alternative construction of the zero-range process\label{sec:alter}}

In this section we provide an alternative construction of the zero-range process again under the assumption that the rate function $g$ is non-decreasing. This construction  is less general than the construction of $\S$\ref{sec:c+im} and $\S$\ref{sec:1d} in the sense that it places more restrictive assumptions on the  rate function $g(n)$, for instance for dimension $d$ and mean-zero, finite-range jump dynamics it requires that $\sup_{1\le k\le n} (g(k)-g(k-1)) \leq C n^a$ with $a < \frac{2}{d}$, see Corollary \ref{finite_m}. On the other hand, it also works on dimensions greater than $1$ and  does not require nearest neighbour jump probabilities.   

We start with some definitions. Let $\{p(x,y)\}_{x,y \in \bb Z^d}$ be a family of translation invariant transition probabilities 
 and consider the continuous-time random walk  $(X_t; \,t \geq 0)$ generated by $\{p(x,y)\}_{(x,y) \in \bb Z^d}$. Let $\bb P^z$ the law of $(X_t; \,t \geq 0)$ with initial condition 
$X_0 = z$. Let 
\[
\tau_0 := \inf \{t \geq 0; X_t =0\}
\]
be the hitting time of the origin by the random walk and let
\begin{equation*}
F_z(t) :=P^z(\tau_0\leq t).
\end{equation*}
Notice that it may happen that $\tau_0 = +\infty$ with positive probability. We recall some notation from $\S$2. Given $\eta\in X$ and $\{x^i\}_{i\in \N}$ an enumeration of the particles of $\eta$, let
\begin{equation}
\label{N_3}
\eta^N(z) := \sum_{i=1}^N \mathbf{1}(x^i = z), \qquad z\in \Z^d
\end{equation}
and
\begin{equation*}
h(n) :=\sup_{1\leq j\leq n}(g(j)-g(j-1)).
\end{equation*}
Finally, for $\eta \in X$, $t\geq 0$ and $z\in \Z^d$ we let
\begin{equation*}
 \overline m_z(t,\eta)=\sum_{i\in \N}F_{x^i-z}(h(i)t).
\end{equation*}

\begin{lemma}
\label{l1.milton}
Let $\eta_0 \in X$ be an initial configuration of particles and let $\{x_0^i\}_{i \in \bb N}$ be an enumeration of the particles of $\eta_0$. Let $t \geq 0$ and $z \in \bb Z^d$.
If $\overline{m}_z(t,\eta_0)$ is finite for any $z \in \bb Z^d$, then  $\eta_s(x)<\infty$ for all $s \in [0,t]$
and $x\in \Z^d$,  and  it satisfies 
\[
\log E [ e^{\theta \eta_s(z)}] \leq (e^\theta-1) \overline{m}_z(t,\eta_0)
\]
for any $s \leq t$, any $\theta>0$ and any $z \in \bb Z^d$. 
\end{lemma}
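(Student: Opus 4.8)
The plan is to control the limiting occupation variable $\eta_s(z)$ by comparing the motion of each individual particle with that of an independent random walk, running the whole argument at the level of the finite truncations $\eta^N$ and then letting $N\to\infty$. Label the particles by the given enumeration, so the particle starting at $x^i$ carries the label $i$. The first step is a construction of the labelled zero-range process $\eta^N$ in which the displacement rate of particle $i$ is controlled by $h(i)$. To this end, at each site order the particles present by their labels and declare that the particle of rank $j$ (the $j$-th smallest label at that site) attempts a jump at rate $g(j)-g(j-1)$, with target chosen according to $p(\cdot)$. Summing over ranks gives total exit rate $\sum_{j=1}^k\big(g(j)-g(j-1)\big)=g(k)$ at a site holding $k$ particles, so the occupation-number marginal is exactly the zero-range process. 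The key observation is that at most $i$ particles carry a label $\le i$, so the rank of particle $i$ at its current site never exceeds $i$; hence its instantaneous jump rate is $g(j)-g(j-1)$ with $j\le i$, and therefore at most $h(i)=\sup_{1\le j\le i}\big(g(j)-g(j-1)\big)$, uniformly in time and in the configuration.

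Next I would couple this labelled process with a family $\{W^i\}_{i\le N}$ of independent continuous-time random walks, where $W^i$ starts at $x^i$, jumps at rate $h(i)$ and has increment law $p(\cdot)$. Since the instantaneous rate $r_i(s)$ of particle $i$ satisfies $r_i(s)\le h(i)$, I would realize the trajectory of particle $i$ as a time change of $W^i$: letting $T^i(s)=\int_0^s r_i(u)/h(i)\,du\le s$, particle $i$ occupies $W^i$ evaluated at the internal time $T^i(s)$. Consequently the sites visited by particle $i$ up to real time $s$ are exactly those visited by $W^i$ up to internal time $T^i(s)\le s$. In particular, if particle $i$ occupies $z$ at some time $s\le t$ then $W^i$ visits $z$ during $[0,t]$, which yields the pathwise bound
\[
\eta^N_s(z)\;\le\;\sum_{i=1}^N \mathbf{1}\{W^i \text{ visits } z \text{ during } [0,s]\}\;\le\;\sum_{i=1}^N \mathbf{1}\{W^i \text{ visits } z \text{ during } [0,t]\}.
\]

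The independence of the walks $W^i$ now does the work. The indicators on the right are independent Bernoulli variables with $P\big(W^i \text{ visits } z \text{ on } [0,s]\big)=F_{x^i-z}(h(i)s)\le F_{x^i-z}(h(i)t)$, by translation invariance and the time-scaling relating a rate-$h(i)$ walk to a rate-$1$ walk. Hence, using $1+a\le e^a$,
\[
E\big[e^{\theta\eta^N_s(z)}\big]\le \prod_{i=1}^N\Big(1+(e^\theta-1)F_{x^i-z}(h(i)t)\Big)\le \exp\Big((e^\theta-1)\sum_{i=1}^N F_{x^i-z}(h(i)t)\Big)\le \exp\big((e^\theta-1)\,\overline m_z(t,\eta_0)\big),
\]
a bound uniform in $N$. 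Since $\eta^N_s(z)\nearrow\eta_s(z)$, monotone convergence transfers this estimate to $\eta_s(z)$ and gives the stated inequality. For the finiteness assertion I would carry the coupling through consistently in $N$, so that $\sup_{s\le t}\eta_s(z)$ is dominated by $V(z):=\sum_{i\in\N}\mathbf{1}\{W^i \text{ visits } z \text{ on } [0,t]\}$; since $E[V(z)]=\overline m_z(t,\eta_0)<\infty$ we get $V(z)<\infty$ a.s., and intersecting over the countably many $z\in\Z^d$ shows $\eta_s(x)<\infty$ for all $s\in[0,t]$ and all $x$ simultaneously.

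The main obstacle lies in the construction underlying the first two steps: producing a labelling-consistent version of $\eta^N$ for which the per-particle rate bound $r_i(s)\le h(i)$ holds at all times (the rank argument) and for which each particle's path is a genuine time change of an independent walk $W^i$, so that ``visiting $z$'' is comparable across the two processes and the $W^i$ remain independent. Once this coupling is in place, the moment-generating-function estimate and the passage to the limit are routine.
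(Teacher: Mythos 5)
Your proof is correct and follows essentially the same route as the paper: the rank-by-label construction you describe is exactly the paper's iterative coupling (the $(N+1)$-st particle, having the largest label, jumps at rate $g(m+1)-g(m)$ where $m$ is the occupation by lower-labelled particles, hence at rate at most $h(N+1)$), and the domination by independent sped-up walks, the Bernoulli/moment-generating-function product bound, and the monotone passage to the limit all match the paper's argument.
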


\noindent\begin{proofmilton}
Note that the sequence of initial configurations $\{\eta_0^N\}_{N \in \bb N}$ is increasing, and therefore one can use the graphical representation to construct a sequence $(\eta_t^N; t \geq 0)$ of zero-range processes with initial conditions $\eta_0^N$, such that $\eta_t^N(x)$ is increasing in $N$ for any $t \geq 0$. Therefore, the limit
\[
\eta_t(x) := \lim_{N \to \infty} \eta_t^N(x)
\]
exists in $[0,\infty]$. By monotonicity this limit is the same as in \eqref{climit}. Our aim is to prove that it is finite. For any $N \in \bb N$ and any $t \geq 0$, $\eta_t^N$ and $\eta_t^{N+1}$ differ in only one site $x_t^{N+1}$ and by only one unit. Conditioned on the trajectory of $(\eta_t^N; t \geq 0)$, the process $(x_t^{N+1}; t \geq 0)$ is a time-inhomogeneous random walk with transition rates
\[
r_t^{N+1} (x,y) = p(y-x) \big[ g(\eta_t^N(x)+1)-g(\eta_t^N(x))\big]
\]
and initial position $x_0^{N+1}$. Since the zero-range process $\eta_t^N$ has exactly $N$ particles,
\[
r_t^{N+1}(x,y) \leq p(y-x) h(N+1).
\]
In particular, $(x_t^N; t \geq 0)$ can be coupled with a random walk $(X_t^N; t \geq 0)$ with transition probabilities $\{p(z)\}_{z \in \bb Z^d}$ starting at $x_0^{N}$ in such a way that both walks visit exactly the same sites in exactly the same order, and such that the walk $(x_t^{N}; t \geq 0)$ always visits sites after the walk $(X_{h(N)t}^N; t \geq 0)$. Moreover, we can define these couplings in such a way that the walks $(X_t^N; t \geq 0)_{N \in \bb N}$ are independent. Define
\[
\tau^N_z:= \inf \{ t \geq 0; X_t^N =z\}, \quad
\tilde{\tau}_z^N := \inf\{t \geq 0; x_t^{N}=z\}
\]
and notice that 
\[
\tilde{\tau}_z^N \geq \frac{\tau^N_z}{h(N)}.
\]
Now we observe that the number of particles at site $z$ at time $t$ is bounded by the number of particles that passed by $z$ up to time $t$. Therefore, for any $z \in \bb Z^d$ and any $N \in \bb N$,
\[
\eta_t^N(z) \leq \sum_{i=1}^N \mathbf{1}(\tilde{\tau}_z^i \leq t) \leq \sum_{i=1}^N \mathbf{1}(\tau_{z}^N \leq h(i) t).
\]
Taking expectations, we see that
\[
E^{\eta_0^N}[\eta_s^N(z)] \leq \sum_{i=1}^N F_{x_0^i-z}(h(i)s) \leq \overline{m}_z(s,\eta_0) \leq \overline{m}_z(t,\eta_0).
\]
Therefore, $(\eta_s; 0 \leq s \leq t)$ is well defined. Notice that $\eta_t$ satisfies
\[
\eta_t(z) \leq \sum_{i \in \bb N} \mathbf{1}(\tau_z^i \leq h(i) t).
\]
The RHS of this estimate is a sum of independent random variables with Bernoulli laws of parameter $p_i = F_{x_0^i-z}(t)$. Therefore,
\[
\log E^{\eta_0}[e^{\theta \eta_t(z)}] \leq \sum_{i \in \bb N} \log (1+p_i(e^\theta-1)) \leq (e^\theta-1) \overline{m}_z(t,\eta_0),
\]
which finishes the proof of the lemma.
\end{proofmilton}

\subsection{The martingale problem and the forward equation}

In this section we show that under the conditions stated in Theorem \ref{t1.milton}, the process constructed in Lemma \ref{l1.milton} satisfies the martingale problem.

\begin{proof}{Theorem}{t1.milton}
Recall that according to Lemma \ref{l1.milton}, the process $(\eta_t; 0 \leq t \leq T)$ is well defined as the increasing limit of the processes $(\eta_t^N; 0 \leq t \leq T)_{N \in \bb N}$, which are the zero-range processes with initial configuration $\eta_0^N$ given by \eqref{N_3}.
Since the process $\eta_t^N$ has a finite number of particles, for any local, bounded function $f: X \to \bb R$,
\[
M_t^N(f) := f(\eta_t^N) -f(\eta^N_0) - \int_0^t L f(\eta_s^N) ds
\]
is a martingale. Since $f$ is bounded, $f(\eta_t^N)- f(\eta_0^N)$ converges {\em a.s.}~to $f(\eta_t)-f(\eta_0)$ and also in $L^p$, for any $p>0$. Let $A \subseteq \bb Z^d$ be the support of $f$, that is, $A$ is the smallest subset of $\bb Z^d$ such that $f(\eta) = f(\xi)$ whenever $\eta$ and $\xi$ agree on $A$, and define $\bar{A}=\{y \in \Z^d,\,p(x-y)>0 \text{ for some }x\in A\}$. Since $f$ is local and $p(\cdot, \cdot)$ is finite range, both $A$ and $\bar{A}$ are finite. We have that
\[
\begin{split}
L f(\eta) 
		&= \sum_{x,y \in A} p(y-x) g(\eta(x)) \big( f(\eta^{x,y}) - f(\eta)\big)\\
		&\quad \quad + \sum_{x \in A, y \notin A} p(y-x) g(\eta(x)) \big( f(\eta- \delta_x) - f(\eta)\big) \\
		& \quad \quad \quad \quad + \sum_{x \in A} \sum_{y \notin A} p(x-y) g(\eta(y)) \big(f(\eta+\delta_x)-f(\eta)\big),
\end{split}
\]
where $\delta_x$ is the configuration with exactly one particle at site $x$ and no particles at other sites. From Lemma \ref{l1.milton},
\[
\log E^{\eta_0^N}[ g(\eta_t^N(z))^p] \leq c^p(e^{p \theta} -1)\overline{m}_z(t,\eta_0)
\]
for any $0 \leq t \leq T$, any $N \in \bb N$ and any $z \in \bb Z^d$. 
From \eqref{As1}, this implies that there exists a constant $C = C(f,c,\theta,p)$ such that
\[
E^{\eta_0^N}[|L f(\eta_t^N)|^p] \leq \exp\big\{C(e^{\theta p}-1) \sup_{z\in \bar{A}} \overline m_z(T,\eta_0)\big\}
\]
for any $0 \leq t \leq T$ and any $N \in \bb N$.
This shows that 
\[
\lim_{N \to \infty} \int_0^t L f(\eta_s^N) ds = \int_0^t L f(\eta_s) ds
\]
in $L^p$ for all $p>1$. Therefore the process
\[
M_t(f) := \lim_{N \to \infty} M_t^N(f)
\]
is well defined and it is a martingale, as we wanted to show. Now \eqref{forward} follows from an argument analogous to the one applied to prove Proposition 
\ref{forwardeqs}.
\end{proof}

\subsection{Finite range random walks}
\label{sec:srw}

\begin{proof}{Corollary}{finite_m}
Since under condition {\em a)} or {\em b)}  the rate function  is bounded by an exponential, we only need to show that \eqref{As1} is satisfied.
First note that under the assumption
\[
\limsup_{m \to \infty} \frac{1}{(2m+1)^d} \sum_{\|x\|\leq m} \eta_0(x) \leq  \rho,
\]
for any $z \in \bb Z^d$ there exists an enumeration of the particles $\{x_0^i\}_{i \in \bb N}$ and a positive constant $c = c(z)$ such that
\begin{equation}
\label{4-1}
\|x_0^k-z\| \geq c k^{1/d}
\end{equation}
for any $k > \eta_0(z)$. Let us obtain a generic bound for the probability $\bb P^x(\tau_0 \leq t)$. By translation invariance,
\[
\bb P^x(\tau_0 \leq t) = \bb P^0(\tau_{-x} \leq t),
\]
where
\[
\tau_{-x} := \inf\{t \geq 0; X_t =-x\},
\]
so we can assume {\em wlog} that $X_0=0$.
We now proceed to show that \eqref{As1} holds under condition {\em a)}. Since
\[
\sum_{z \in \bb Z^d} z p(z) = 0,
\]
the random walk $(X_t; \,t \geq 0)$ is a martingale. Therefore, by Doob's inequality, for any $p >1$,
\[
\bb E^0[ \sup_{0 \leq s \leq t} \|X_s\|^p] \leq C_p \bb E^0[\|X_t\|^p].
\]
Notice that 
\[
\bb P^0(\tau_{-x} \leq t) \leq \bb P^0 \big( \sup_{0 \leq s \leq t} \|X_s\| \geq \|x\|) \leq \frac{C_p \bb E^0[\|X_t\|^p]}{\|x\|^p}.
\]
On the other hand, a simple induction in $p$ argument shows that if $p\in \N$ there exists a constant $C$ depending only on the transition probabilities $\{p(z)\}_{z \in \bb Z^d}$ and the exponent $p$ such that
\[
\bb E^0 [\|X_t\|^p] \leq C t^{p/2}.
\]
This can then be extended to all $p\in \R, p\geq1$ because by Jensen's inequality  for any real valued random variable $Y$, $[E(\vert Y\vert^p)]^{1/p}$ is increasing in $p$.
We conclude that
\begin{equation}
\label{5-1}
\bb P^0(\tau_{-x} \leq t) \leq \frac{C'_p t^{p/2}}{\|x\|^p}.
\end{equation}
Therefore, by \eqref{4-1} and \eqref{5-1},
\[
\sum_{k \in \bb N} F_{x_0^k-z}(h(k) t) \leq \eta_0(z) + \sum_{k > \eta_0(z)} \frac{C t^{p/2} h(k)^{p/2}}{k^{p/d}}.
\]
Taking $p$ large enough to satisfy $a < \frac{2}{d} - \frac{2}{p}$, the above sum converges and  \eqref{As1}  is satisfied.

To prove that condition {\em b)} implies  \eqref{As1} too, first note  that from \eqref{4-1} and the finite range condition on $p(x,y)$, there exists a constant $D$ such that for $k$ large enough, the $k$-th particle  of our enumeration must jump at least $Dk^{1/d}$ times before reaching $z$. Therefore  $F_{x^k_0-z}(h(k)t)$ is bounded above by
\begin{align}
\label{floor}
P\big(Z_1+\dots + Z_{\lfloor Dk^{1/d}\rfloor }\leq h(k)t\big),
\end{align}
where the $Z_i$'s are i.i.d.~exponential random variables of parameter $1$, and $\lfloor Dk^{1/d}\rfloor$ denotes the integer part of $Dk^{1/d}$. Since by condition {\em b)}
\[
\lim_{k\to \infty} \frac{h(k)}{k^{1/d}}=0,
\]
the probability in \eqref{floor} tends to $0$ faster than $\exp(-\alpha k^{1/d})$ for some $\alpha>0$. Therefore
$\sum_k F_{x^k_0-z}(h(k)t)<\infty$
and the proof of the corollary is complete.
\end{proof}

\begin{remark}
\rm 
One could try to use a large deviations bound in order to improve the estimate on the hitting time of the origin. This gives only a marginal improvement on the class of rate functions for which our method applies. The idea is the following. If $h(n) \ll n^{2/d}$, then the event $\{\tau_0 \leq h(n) t\}$ is a large deviation under $\bb P^{x^n_0}$, since the walk does not have time to reach the origin in such a short time. By large deviations, we know how to realize this rare event efficiently: we need to put a constant drift towards the origin, in such a way that the walk arrives to the origin exactly at time $t$. A reasoning along these lines will eventually lead to a sharp estimate, but this sharper estimate does not allow to go beyond $h(n) \ll n^{2/d}$. For $h(n) \gg n^{2/d}$, this is no longer a large deviations estimate, but an estimate on Green's functions, since in that case, up to time $h(n)t$  the walk has time to explore a region of space that has the origin near its center. Therefore, large deviations estimates may help to prove Corollary \ref{finite_m} for a rate $g(n) = \frac{n^{2/d}}{a(n)}$ with $a(n) \uparrow +\infty$ slower than $n^\delta$ for any $\delta >0$, but not to cross the threshold $n^{2/d}$, and fundamentally different techniques are needed to study faster growing rates.
\end{remark}

\begin{remark}
\rm
For finite-range, \emph{asymmetric} random walks, one can apply large deviation bounds to improve condition {\emph b)} in Corollary \ref{finite_m}. The particles that have a reasonable chance to ever hit the origin belong to a curved cone of radius $o(t^{1/2+\varepsilon})$, with vertex at the origin and central ray in the direction opposed to the mean of the walk. The number of points in this cone up to distance $n$ from the origin is of order $o(n^{\frac{d+1}{2} +(d-1)\varepsilon})$. If $\{x_0^n; n \in \mathbb N\}$ is a labeling of the particles in the cone in increasing distance from the origin, we have that $\|x_0^n\| = \mathcal O(n^{\frac{2}{d+1}-\varepsilon'})$ for some $\varepsilon'>0$ that goes to $0$ when $\varepsilon \to 0$. Therefore, Corollary \ref{finite_m} should hold for $a < \frac{2}{d+1}$. Since the computations are very demanding and the result is not optimal, we decided to exclude them from the article.
\end{remark}

\section{Open problems}\label{op}
We finish this paper stating some open problems.
\begin{enumerate}
 \item Does equality hold in item ii) of Proposition \ref{l1}?
 \item By Proposition \ref{l1} we see there is a large class of initial configurations for which the process does not explode. Are all configurations with a finite asymptotic density in that class? Theorem
 \ref{Markov1} states that this is the case when $p(x,y)$ corresponds to a nearest neighbour one-dimensional random walk, but its proof cannot be generalized and new ideas are required.
 \item In the context of Theorem \ref{martingales1}, does the integrated forward equation hold in the symmetric case for any increasing function 
 $g(k)$? Does the backward equation hold if $g(k)$ is bounded by an exponential? Regarding this last question, in \cite{BRSS} the backward equation  is proved  up to some finite time $t$ which depends on the initial configuration  when
 $d=1$ and $p(0,1)=1$.
\end{enumerate}

\section*{Acknowledgments} The authors thank the referees for their careful reading and sharp suggestions. E. Andjel also thanks Maria Eulalia Vares
for interesting discussions concerning the martingale problem and IMPA (Rio de Janeiro, Brazil) for its hospitality. E. Andjel was partially supported first by CNPq grant  300722/2013-3
and then by FAPERJ grant E-26/200.033/2016.
I. Armend\'ariz was partially funded by the PICT grant 2015 "Grafos aleatorios, procesos puntuales y metaestabilidad" and PIP grant PIP11220130100521CO.
M. Jara was funded by ERC Horizon 2020 grant 715734, CNPq  grant  305075/2017-9 and FAPERJ grant E-29/203.012/201. I. Armend\'ariz and M. Jara were partially supported by MathAmSud project "Random Structures and Processes in Statistical Mechanics".

\small 
\bibliographystyle{plain}

\end{document}